\begin{document}

\newcommand{\out}{\ensuremath{\mathrm{Out}(\mathbb{F}) } }
\newcommand{\aut}{\ensuremath{\mathrm{Aut}(\mathbb{F}) }}
\newcommand{\F}{\ensuremath{\mathbb{F} } }
\newcommand\addtag{\refstepcounter{equation}\tag{\theequation}}
\newtheorem{theorem}{Theorem}[section]
\newtheorem{corollary}[theorem]{Corollary}
\newtheorem{lemma}[theorem]{Lemma}
\newtheorem{proposition}[theorem]{Proposition}
\newtheorem{remark}[theorem]{Remark}
\newtheorem{definition}[theorem]{Definition}
\newtheorem*{thma}{Theorem A}

\title{Relative hyperbolicity of Free-by-cyclic extensions}
\author{Pritam Ghosh}

%\authorrunning{Short form of author list} % if too long for running head

\date{}
%\date{Received: date / Accepted: date}
% The correct dates will be entered by the editor

\maketitle

\begin{abstract}
 Given a finitely generated free group $\F$ of $\mathsf{rank}(\F)\geq 3$, we show that the mapping torus of $\phi$ 
 is (strongly) relatively hyperbolic if $\phi$ is exponentially growing. As a corollary of our work, we give a new proof 
 of Brinkmann's theorem which proves that the mapping torus of an atoroidal outer automorphism is hyperbolic. We also give a new proof 
 of the Bridson-Groves theorem that the mapping torus of a free group automorphism satisfies the quadratic isoperimetric inequality.

 Our work also answers a question asked by Minasyan-Osin: 
 the mapping torus of an outer autmomorphism is not virtually acylindrically hyperbolic if and only if $\phi$ has finite order. 
\end{abstract}

\section{Introduction}

Fix a finitely generated free group $\F$ with $\mathsf{rank}(\F)\geq 3$. Any element $\phi\in\out$ (the outer automorphism 
group of $\F$) gives us a short exact sequence $$1 \to \F \to \Gamma \to \langle \phi \rangle \to 1$$ where the 
extension group $\Gamma$ is referred to as the \textit{mapping torus} of $\phi$. Significant work has been done in understanding
geometry of $\Gamma$. Bestvina-Feighn-Handel first proved that the mapping torus is hyperbolic if $\phi$ is fully irreducible and 
atoroidal \cite[Theorem 5.1]{BFH-97}. It follows from work of Bestvina-Feighn \cite{BF-92} and Brinkmann \cite{Br-00} 
that $\Gamma$ is hyperbolic if and only if $\phi$ does not have any periodic conjugacy classes (i.e. $\phi$ is \textit{atoroidal}).
We contribute to this study by showing 

\newtheorem*{thmb}{Theorem \ref{main}}
 \begin{thmb}
If $\phi\in\out$, then the extension group $\Gamma$ in the sequence $1 \to \F \to \Gamma \to \langle \phi \rangle \to 1$ 
 is strongly relatively hyperbolic if and only if $\phi$ is exponentially growing.
 \end{thmb}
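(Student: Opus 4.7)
I would prove the two directions separately; the main effort lies in sufficiency.

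\emph{Necessity.} Argue contrapositively: assume $\phi$ is polynomially growing (possibly of finite order). After replacing $\phi$ by a power fixing a conjugacy class $[c]$, the elements $c \in \F$ and the stable letter $t \in \Gamma$ satisfy $tct^{-1}=\phi(c)=c$, hence generate a $\mathbb{Z}^2$ subgroup of $\Gamma$. If $\phi$ fixes several non-commensurable conjugacy classes $[c_1],\ldots,[c_r]$ --- which is the generic case for polynomially growing automorphisms by the Bestvina--Feighn--Handel classification --- then the resulting $\mathbb{Z}^2$'s pairwise intersect in the common cyclic subgroup $\langle t\rangle$. A strongly relatively hyperbolic structure requires an almost malnormal peripheral family, which such intertwining $\mathbb{Z}^2$'s cannot satisfy unless $\F$ itself is peripheral, contradicting properness.

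\emph{Sufficiency: setup.} Replace $\phi$ by a rotationless power and choose a CT (completely split improved relative train track) $f\colon G \to G$ representing it. The filtration on $G$ partitions edges into exponential strata and polynomial strata (NEG edges, zero strata, and the Kolchin-type pieces). Read off the maximal $\phi$-invariant subgroups $H_1,\ldots,H_m \leq \F$ on which $\phi$ restricts to a polynomially growing automorphism, take one representative from each $\F$-conjugacy class, and form the peripheral family $\mathcal{P} = \{H_i \rtimes_\phi \mathbb{Z}\}_{i=1}^{m}$ together with their $\Gamma$-conjugates. The claim to establish is that $\Gamma$ is hyperbolic relative to $\mathcal{P}$.

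\emph{Sufficiency: main step and obstacle.} I would verify a Bestvina--Feighn annulus flaring property in the relative setting and invoke a relative combination theorem in the style of Mj--Reeves or Gautero--Lustig. Concretely, for any bi-infinite quasi-geodesic in the cone-off of the Cayley graph of $\F$ along cosets of the $H_i$, iterates of $\phi$ must stretch the quasi-geodesic exponentially on at least one side. On words supported in exponential strata this is the classical transition-matrix expansion combined with bounded cancellation, essentially the argument of Bestvina--Feighn--Handel extended from the fully irreducible setting. The principal obstacle, and the heart of the proof, is the mixed case: paths crossing between exponential and polynomial strata. Here I would use the complete splitting of the CT to decompose such a path into irreducible pieces, show that polynomial pieces either lie in some $H_i$ (and are absorbed after coning off) or are controlled by Kolchin-type bookkeeping, and prove that the surviving exponential pieces dominate the relative length and flare as required. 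Once flaring holds, the combination theorem delivers relative hyperbolicity of $\Gamma$ with respect to $\mathcal{P}$, and the corollaries announced in the abstract follow: Brinkmann's theorem is the atoroidal case $\mathcal{P}=\emptyset$, the quadratic isoperimetric inequality comes from the peripherals being virtually polycyclic with quadratic Dehn function, and the Minasyan--Osin question resolves because non-elementary strongly relatively hyperbolic groups are always acylindrically hyperbolic.
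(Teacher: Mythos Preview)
Your overall architecture for sufficiency---prove a flaring condition for the coned-off Cayley graph and feed it into a Mj--Reeves/Gautero-style combination theorem---matches the paper. However, your choice of peripheral structure and the tools you propose differ from the paper's actual route, and two points deserve correction.

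\textbf{Sufficiency: different peripheral structure and missing key tool.} You propose as peripherals the maximal $\phi$-invariant subgroups on which $\phi$ is polynomially growing. The paper instead fixes a single \emph{topmost} dual lamination pair $\Lambda^\pm_\phi$ and takes the components of the Handel--Mosher \emph{nonattracting subgroup system} $\mathcal{A}_{na}(\Lambda^\pm_\phi)$ as peripherals. This system is known to be malnormal, which is exactly what makes the relative combination theorem applicable; by contrast, you would need to establish malnormality of your polynomial-growth system from scratch. The engine that drives the paper's flaring proof is the \emph{weak attraction theorem}: any line not carried by $\mathcal{A}_{na}(\Lambda^\pm_\phi)$ is uniformly attracted to $\Lambda^+_\phi$ or $\Lambda^-_\phi$, which yields a quantitative legality estimate and hence exponential growth in the electrocuted metric. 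Your description of ``complete splitting plus Kolchin bookkeeping'' is in the right spirit but does not name this tool, and without it the mixed-stratum case you flag as the main obstacle has no clear resolution. The polynomial-growth peripherals you describe do appear in the paper, but only as a \emph{corollary} obtained by recursively applying the nonattracting-subgroup result and descending in rank.

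\textbf{Necessity: your argument has a gap.} Your approach via intertwined $\mathbb{Z}^2$'s sharing $\langle t\rangle$ does force them into a common peripheral $P$, but you have not shown that the fixed conjugacy classes $c_1,\ldots,c_r$ generate all of $\F$; a priori they could generate a proper subgroup, leaving room for a nontrivial peripheral structure. The paper instead invokes Macura's computation that the mapping torus of a polynomially growing automorphism has polynomial divergence, combined with Sisto's result that nontrivially relatively hyperbolic groups have exponential divergence.

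\textbf{One outright error.} The peripheral subgroups $F^i\rtimes_{\Phi_i}\mathbb{Z}$ are free-by-cyclic, not virtually polycyclic; their quadratic Dehn function is the polynomial-growth case of Bridson--Groves, which the paper takes as input rather than reproves.
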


 One interesting observation that comes out of this result is connected to the mapping class group theory of surfaces with boundaries. 
 It is well known that pseudo-Anosov maps of such surfaces give us (strongly) relatively hyperbolic extension groups. Our work here 
 shows that pseudo-Anosov's are not the only type of maps which give relatively hyperbolic extensions.

The ``only if'' direction follows from the work of Makura \cite{Mak-02}. The ``if'' direction is our main contribution to the above theorem. 
We prove this in Proposition \ref{relhypext} where we show that $\Gamma$ will be (strongly) hyperbolic relative to a collection of subgroups which 
correspond to the mapping torus of (representatives of) components of the nonattracting subgroup system of certain attracting lamination of $\phi$. 
This connection between nonattracting subgroup system and relative hyperbolicity was first established in \cite{self2}. This connection is very natural in the 
sense that a conjugacy class is attracted to some lamination under iteration of $\phi$ if and only if it is not carried by the nonattracting subgroup 
system; and being attracted to any attracting lamination implies exponential growth for the conjugacy class.

As a corollary of Proposition \ref{relhypext}, we give a new proof of Brinkmann's result by a simple inductive argument (inducting on the rank of 
the subgroups of $\F$ chosen as representatives of components of the nonattracting subgroup system) on the peripheral subgroups 
obtained in Proposition \ref{relhypext}.

\newtheorem*{thmd}{Corollary \ref{atoroidal}}
\begin{thmd}
 $\phi\in\out$ is atoroidal if and only if the extension group $\Gamma$, in the short exact sequence $1\to\F\to\Gamma\to\langle\phi\rangle\to 1 $, 
  is a hyperbolic group.
\end{thmd}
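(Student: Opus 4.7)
The plan is to prove the ``only if'' direction directly and the ``if'' direction by induction on $\mathsf{rank}(\F)$, using Proposition \ref{relhypext} as the key inductive tool.

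For the ``only if'' direction, suppose $\Gamma$ is hyperbolic but $\phi$ admits a periodic conjugacy class, so $\phi^k([g]) = [g]$ for some $k \geq 1$ and nontrivial $g \in \F$. Lifting to $\Gamma$, one obtains an element $t \in \Gamma$ mapping to $\phi^k \in \langle \phi \rangle$ that commutes with a suitable conjugate of $g$. The subgroup $\langle g, t \rangle \leq \Gamma$ is then isomorphic to $\mathbb{Z}^2$, contradicting hyperbolicity. Hence $\phi$ must be atoroidal.

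For the ``if'' direction I would induct on $n = \mathsf{rank}(\F)$. The base case is $n = 2$: atoroidal elements of $\mathrm{Out}(\mathbb{F}_2) \cong \mathrm{GL}(2,\mathbb{Z})$ correspond to pseudo-Anosov classes on a once-punctured torus or thrice-punctured sphere, whose mapping tori are cusped hyperbolic $3$-manifold groups and are therefore hyperbolic. For the inductive step, assume $\phi \in \out$ is atoroidal with $\mathsf{rank}(\F) = n \geq 3$. First, atoroidality forces $\phi$ to be exponentially growing: every polynomially growing element of $\out$ has a positive (in fact UPG) power fixing some nontrivial conjugacy class, by the relative train track theory of Bestvina--Feighn--Handel. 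So Proposition \ref{relhypext} applies and exhibits $\Gamma$ as strongly hyperbolic relative to a finite collection $\{P_1,\dots,P_k\}$, where each $P_i$ is the mapping torus of a restriction $\phi^{m_i}|_{H_i}$ for some $\phi^{m_i}$-invariant subgroup $H_i \leq \F$ representing a component of the nonattracting subgroup system of an attracting lamination of $\phi$. The restriction $\phi^{m_i}|_{H_i}$ remains atoroidal, since any periodic conjugacy class inside $H_i$ would push forward to one of $\phi$ in $\F$. Provided each $H_i$ has rank strictly less than $n$, the inductive hypothesis yields that every $P_i$ is hyperbolic, and since a group that is strongly hyperbolic relative to a finite collection of hyperbolic subgroups is itself hyperbolic, we conclude that $\Gamma$ is hyperbolic.

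The main obstacle I expect is justifying the rank-reduction step cleanly, i.e.\ that every $H_i$ returned by Proposition \ref{relhypext} has rank strictly less than $n$. This is what drives the induction and terminates it in finite time; it should be intrinsic to the notion of nonattracting subgroup system for an exponentially growing automorphism, as developed in \cite{self2}, because the presence of a nontrivial attracting lamination rules out a component of full rank. Care is also needed to handle the cyclic permutation of components under $\phi$, which is exactly what forces the appearance of the power $\phi^{m_i}$ in the description of $P_i$ rather than $\phi$ itself; this does not obstruct atoroidality passing to the restrictions, but it must be tracked through the statement of Proposition \ref{relhypext}.
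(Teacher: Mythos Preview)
Your overall strategy matches the paper's: reduce via Proposition \ref{relhypext}, use that atoroidal forces exponential growth, that the restriction to each peripheral factor remains atoroidal, and that a group strongly relatively hyperbolic with hyperbolic peripherals is itself hyperbolic. The ``only if'' direction via a $\mathbb{Z}^2$ subgroup is standard and fine.

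There is, however, a genuine error in your base case. You assert that atoroidal elements of $\mathrm{Out}(\mathbb{F}_2)$ correspond to pseudo-Anosovs on the once-punctured torus and that their mapping tori are hyperbolic. Both claims are false. First, there are \emph{no} atoroidal elements of $\mathrm{Out}(\mathbb{F}_2)$: every outer automorphism of $\mathbb{F}_2 = \langle a,b\rangle$ preserves the conjugacy class of $[a,b]$ up to inversion, since $\mathrm{Out}(\mathbb{F}_2)$ is the mapping class group of the once-punctured torus and $[a,b]$ represents the peripheral loop. Second, even granting your description, the mapping torus of a pseudo-Anosov on a punctured surface is a \emph{cusped} hyperbolic $3$-manifold group; it contains $\mathbb{Z}^2$ cusp subgroups and is therefore not word-hyperbolic, only relatively hyperbolic. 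So your base case is vacuous for the right reason but justified by a wrong one, and the sentence ``therefore hyperbolic'' is simply incorrect.

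The paper handles this point the other way around: it observes that no component $H_i$ of rank $\leq 2$ can ever appear in the recursion when $\phi$ is atoroidal, because any automorphism of a free group of rank $\leq 2$ has a periodic conjugacy class, contradicting atoroidality. Thus the recursion terminates only when the nonattracting subgroup system becomes trivial (necessarily at some rank $\geq 3$), and at that stage Proposition \ref{relhypext} yields hyperbolicity outright. Your worry about cyclic permutation of components is also unnecessary once you pass to a rotationless power at the outset, as the paper does: for rotationless $\phi$ each $[F^i]$ is individually $\phi$-invariant, so no exponent $m_i>1$ is needed.
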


We then combine our work with a result of Button-Kropholler \cite{BuK-16} to answer a question asked by 
Minasyan and Osin in \cite[Problem 8.2]{MinOs-15}. 

\newtheorem*{thmc}{Corollary \ref{coro}}
 \begin{thmc}
If $\phi\in\out$, then the extension group $\Gamma$ in the sequence $1 \to \F \to \Gamma \to \langle \phi \rangle \to 1$ 
 is not virtually acylindrically hyperbolic if and only if $\phi$ has finite order.
 \end{thmc}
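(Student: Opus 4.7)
My plan is to split the argument by the order and growth type of $\phi$. If $\phi$ has finite order $n$, I lift to a representative automorphism $\Phi$; then $\Phi^n=\mathrm{ad}_w$ for some $w\in\F$, and inside the index-$n$ subgroup $\F\rtimes_{\Phi^n}\langle t^n\rangle$ of $\Gamma$ the element $t^nw^{-1}$ centralises $\F$, identifying the subgroup with $\F\times\mathbb{Z}$. Any finite-index subgroup $H\leq\Gamma$ therefore contains a further finite-index subgroup $K$ of $\F\times\mathbb{Z}$. Standard arguments show $K$ contains an infinite cyclic central subgroup (its intersection with the $\mathbb{Z}$-factor) and surjects onto a finite-index subgroup of $\F$, hence is not virtually cyclic thanks to $\mathsf{rank}(\F)\geq 3$. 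Osin's theorem that an acylindrically hyperbolic group admits no infinite amenable normal subgroup prevents $K$ from being acylindrically hyperbolic, and since acylindrical hyperbolicity passes to finite-index subgroups, $H$ is not acylindrically hyperbolic either. This yields the ``if'' direction.

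For the converse, assume $\phi$ has infinite order. If $\phi$ grows exponentially, Theorem~\ref{main} realises $\Gamma$ as strongly hyperbolic relative to the family of proper peripheral subgroups supplied by Proposition~\ref{relhypext}; these are proper because the existence of an attracting lamination forces some conjugacy class of $\F$ to escape the nonattracting subgroup system. Since $\F\leq\Gamma$ is free of rank at least three, $\Gamma$ is non-elementary, and Osin's result that a non-elementary properly relatively hyperbolic group is acylindrically hyperbolic completes this case (with no ``virtually'' needed).

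The remaining case, $\phi$ polynomially growing of infinite order, is the main obstacle. Here Theorem~\ref{main} gives no information, and the plan is to invoke the Button--Kropholler theorem of \cite{BuK-16}, which studies mapping tori of polynomially growing automorphisms and, in the infinite-order regime, produces a splitting of a finite-index subgroup of $\Gamma$ from which a proper infinite hyperbolically embedded subgroup can be extracted; this gives virtual acylindrical hyperbolicity via the Dahmani--Guirardel--Osin criterion. The non-routine step is verifying that Button--Kropholler's hypotheses apply: one uses the Kolchin-type structure theory guaranteeing that any infinite-order polynomially growing outer automorphism has a power represented by a train-track map with a non-trivial linear stratum, and this stratum provides precisely the splitting data required to feed into \cite{BuK-16}.
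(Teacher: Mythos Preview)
Your approach is essentially the same as the paper's: trichotomize into finite order, exponential growth, and polynomial growth of infinite order; invoke Theorem~\ref{main} for the exponential case; and quote Button--Kropholler for the polynomial infinite-order case. The paper is in fact terser than your proposal---it leaves the finite-order direction implicit and simply cites \cite[Corollary~4.3]{BuK-16} as a black box in the polynomial case. Your detailed handling of the finite-order direction (via the finite-index $\F\times\mathbb{Z}$ subgroup and Osin's theorem that acylindrically hyperbolic groups have no infinite amenable normal subgroup) is correct and fills in what the paper omits.

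One minor point: in the polynomial case you over-elaborate. The Button--Kropholler corollary, as quoted in the paper, has exactly the hypotheses ``polynomially growing'' and ``infinite order in $\out$'', both of which are given; there is nothing further to verify, and no need for you to locate a linear stratum or extract splitting data yourself. That machinery lives inside \cite{BuK-16}, and the paper treats it purely as a citation. Your remark that this is ``the non-routine step'' is misplaced---it is the routine step.
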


 The proof of Theorem \ref{main} uses the completely split train track theory from Feighn-Handel's Recognition theorem 
 work \cite{FH-11} and the weak attraction theory from Handel-Mosher's Subgroup decomposition for $\out$ body of work \cite{HM-13c}.

 \textbf{Idea of proof: }
 Given an exponentially growing $\phi$, one always has an attracting lamination $\Lambda^+$ associated to it. The 
 nonattracting subgroup system $\mathcal{A}_{na}(\Lambda^+)$ is a malnormal subgroup system that carries all conjugacy classes 
 which are not attracted to $\Lambda^+$ under iterations of $\phi$ (see section 2.1). 
 
 We perform a partial electrocution of $\Gamma$ with respect to a collection of certain lifts of the components of $\mathcal{A}_{na}(\Lambda^+)$
 and form a new metric space $(\widehat{\Gamma}, {|\cdot|}_{el})$. After this we use the weak attraction theorem 
 to show the under iteration of either $\phi$ or $\phi^{-1}$, we gain enough legality \ref{legality} to proceed 
(primarily following the technique in Bestvina-Feighn-Handel's work in \cite{BFH-97}) to prove that we have flaring \ref{strictflare}. 
This combined with the Mj-Reeves strong combination theorem \cite[Theorem 4.6]{MjR-08} proves that $\Gamma$ will be strongly relatively 
hyperbolic in Proposition \ref{relhypext}.

When $\phi$ is atoroidal, a simple inductive argument by repeatedly applying Proposition \ref{relhypext} on the peripheral subgroups which are being 
electrocuted to form the 
coned-off graph, shows that $\Gamma$  will be hyperbolic thus giving a new proof of Brinkmann's theorem.
 
In the last section we show an application of our work which generalizes a theorem of \cite{BFH-97} and 
appears in form of the following theorem:

\newtheorem*{thme}{Theorem \ref{free-by-free}}
 \begin{thme}
 Let $\phi, \psi$ be outer automorphisms which satisfy the standing assumptions \ref{SA}. Then there exists some 
  $M>0$ such that for every $m, n \geq M$ the group $Q:=\langle \phi^m, \psi^n \rangle$ is a free group of rank $2$ and 
  the extension group $\F\rtimes \widetilde{Q}$ is hyperbolic for any lift $\widetilde{Q}$ of $Q$.
 \end{thme}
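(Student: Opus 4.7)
The plan is to follow the general strategy of the analogous result in \cite{BFH-97}, combining a ping-pong argument with the legality and flaring technology already developed for Theorem \ref{main}. First I would establish that $Q = \langle \phi^m, \psi^n \rangle$ is free of rank $2$ for sufficiently large $m, n$. The standing assumptions \ref{SA} should ensure that $\phi$ and $\psi$ are exponentially growing with sufficiently independent attracting/repelling data (disjoint fixed points on a projective space of currents or laminations, and non-conjugate nonattracting subgroup systems). High iterates then exhibit North--South-type dynamics on this projective space, and the classical Klein ping-pong lemma, applied to disjoint neighborhoods of the attracting and repelling fixed sets of $\phi$ and $\psi$, produces a threshold $M_0$ beyond which $Q$ is free of rank $2$.

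Next I would verify that every nontrivial $w \in Q$, represented by a reduced word in $\phi^{\pm m}$ and $\psi^{\pm n}$, is itself atoroidal and exponentially growing: by the same ping-pong setup, $w$ has its own North--South dynamics on projective space, which rules out periodic conjugacy classes and hence forces atoroidality. This is needed both as intuition and to enable applications of Corollary \ref{atoroidal} along every cyclic direction of $Q$.

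For hyperbolicity of $\F \rtimes \widetilde{Q}$, I would realise this extension as a tree of spaces $X$ over the Cayley tree of the rank-$2$ free group $Q$, with $\F$-vertex spaces glued by $\phi^m$- and $\psi^n$-twists along edges of two distinguished colors, and then apply the Bestvina--Feighn combination theorem. The crucial input is the hallways-flare condition for every bi-infinite geodesic in $X$. Such a geodesic projects onto a bi-infinite word in $Q$ that decomposes into maximal monochromatic $\phi^{\pm m}$- and $\psi^{\pm n}$-segments, and on each monochromatic segment the legality lemma \ref{legality} and strict flaring proposition \ref{strictflare} established in the proof of Theorem \ref{main} provide the required exponential divergence.

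The principal obstacle, and the place where the new work is concentrated, is flaring across color changes. When a geodesic switches from a $\phi^m$-segment to a $\psi^n$-segment, one must argue that the legality accumulated for $\phi$ survives, in a form usable for a flaring estimate, into legality for $\psi$. This is exactly where the independence of the attracting laminations $\Lambda^+_\phi, \Lambda^+_\psi$ and of their nonattracting subgroup systems (guaranteed by \ref{SA}) becomes essential: the weak attraction theorem of \cite{HM-13c}, combined with taking $m, n$ sufficiently large, forces $\Lambda^+_\phi$ to be weakly attracted to $\Lambda^+_\psi$ under iteration of $\psi^n$, and symmetrically. Consequently, a switch from one color to the other amplifies, rather than destroys, the total legality budget, and the hallways-flare condition follows for every bi-infinite geodesic. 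The Bestvina--Feighn combination theorem then yields hyperbolicity of $\F \rtimes \widetilde{Q}$ for all $m, n \geq M$, where $M$ is the larger of $M_0$ and the threshold needed for the cross-color flaring estimate.
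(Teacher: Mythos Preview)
Your overall architecture---verify freeness of $Q$ by some ping-pong, then apply the Bestvina--Feighn combination theorem to the tree of spaces over the Cayley tree of $Q$---matches the paper's. The substantive divergence is in \emph{how} the hallways-flare condition is verified. The paper does not decompose hallways into monochromatic runs and then argue separately across color changes. Instead it proves a single 3-of-4 stretch lemma (Proposition~\ref{34}, following Mosher): for sufficiently large $N$ and any word $w$, at least three of $|\Phi^{\pm N}(w)|,\ |\Psi^{\pm N}(w)|$ exceed $3|w|$. This immediately gives flaring along \emph{any} geodesic in the rank-two tree, because at each vertex at most one of the four outgoing edges can fail to stretch, and a geodesic uses two distinct edges; an easy induction then propagates growth to the endpoints. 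Freeness of $Q$ also drops out of the same lemma via a length ping-pong, with no appeal to dynamics on projective currents.

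Your cross-color analysis is aiming at exactly the content of the 3-of-4 lemma, but as written it has a gap: the monochromatic tools you invoke (Lemma~\ref{legality}, Proposition~\ref{strictflare}) need a fixed number of iterates of a \emph{single} automorphism to take effect, so they say nothing directly about a hallway that alternates $\phi^m\psi^n\phi^m\psi^n\cdots$ with monochromatic segments of length one. Your sketch (``a switch from one color to the other amplifies the legality budget'') is the right intuition, but making it quantitative is precisely the work packaged in Proposition~\ref{34}, whose proof goes through Lemma~\ref{conv} (a sequence of conjugacy classes cannot simultaneously approximate $\Lambda^-_\phi$ and $\Lambda^-_\psi$) rather than through a direct legality-transfer statement. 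Two smaller points: under the standing assumptions both nonattracting subgroup systems are \emph{trivial}, so there is nothing to compare there and the electrocuted metric is the ordinary one; and the detour through ``every $w\in Q$ is atoroidal'' is not needed for the combination theorem.
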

 
 The proof of this theorem proceeds by proving a version of Mosher's 3-of-4 stretch lemma \ref{34} in this context and using it together with the 
 Bestvina-Feighn combination theorem. An alternative proof of this theorem can be obtained by a recent work of Uyanik \cite{Uya-17}.
 
Hyperbolic extensions of free groups have also been produced by Dowdall-Taylor \cite{DT-18} in their work on convex cocompact subgroups of $\out$ and 
by Uyanik in a recent work \cite{Uya-17}. Uyanik does not have any assumptions of fully-irreducibilty on the elements of the quotient group. 
 Theorem \ref{free-by-free}  gives a  new class of examples of free-by-free hyperbolic extensions and we 
 hope that this will be useful in the future when further research is done to give even  weaker conditions on $Q$ so that 
 the extension group is hyperbolic. This result is a significant generalization of a theorem of Bestvina-Feighn-Handel \cite[Theorem 5.2]{BFH-97}
  where they prove a similar result by assuming $\phi,\psi$ to be fully irreducible and atoroidal.
  
  As an application, we use our work together with the polynomial growth case of Bridson-Groves' theorem \cite{BrG-10}, 
  to give a new proof of the general case of the  Bridson-Groves theorem \cite{BrG-10} which shows that the mapping torus of any free group 
  automorphism satisfies the quadratic isoperimetric inequality, implying that the conjugacy problem is solvable for such groups.

  \newtheorem*{thmk}{Theorem \ref{quad}}
  \begin{thmk}
   The mapping tori of a free group automorphism satisfies the quadratic isoperimetric inequality.
  \end{thmk}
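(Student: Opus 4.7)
The plan is to induct on $\mathsf{rank}(\F)$, using the growth dichotomy for elements of $\out$ at each step. For the base cases $\mathsf{rank}(\F) \leq 2$, the result is classical: when $\F \cong \mathbb{Z}$ the mapping torus is virtually $\mathbb{Z}^2$, and when $\F$ is of rank $2$ one may realize every outer automorphism as an (extended) mapping class of a once-punctured torus, whose mapping tori are $3$-manifold groups known to satisfy the quadratic isoperimetric inequality. So fix $\mathsf{rank}(\F) \geq 3$ and assume the theorem has been established for all free-by-cyclic extensions over free groups of strictly smaller rank.

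For the inductive step, given $\phi \in \out$, I would split into two cases. If $\phi$ is polynomially growing, I cite the polynomial growth case of Bridson--Groves \cite{BrG-10} directly, which is exactly the input acknowledged in the statement. If $\phi$ is exponentially growing, I invoke Proposition \ref{relhypext}: the mapping torus $\Gamma$ is strongly hyperbolic relative to a finite family of peripheral subgroups, each being the mapping torus of a representative of a component of the nonattracting subgroup system $\mathcal{A}_{na}(\Lambda^+)$. Each such component is a finitely generated subgroup $H \leq \F$ with $\mathsf{rank}(H) < \mathsf{rank}(\F)$; after passing to a suitable power $\phi^k$, which corresponds to a finite-index subgroup of $\Gamma$ and so preserves quadratic isoperimetric behaviour up to constants, each component is $\phi^k$-invariant up to conjugacy and the restriction yields a genuine free group automorphism of $H$, so that the inductive hypothesis applies to each peripheral.

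To conclude, I would apply the combination result for Dehn functions of relatively hyperbolic groups due to Osin: if a finitely presented group is strongly hyperbolic relative to peripherals each satisfying a quadratic isoperimetric inequality, then the ambient group does as well. Combined with the preceding step, this closes the induction. The main obstacle I anticipate is the careful identification of the peripherals of Proposition \ref{relhypext} as honest mapping tori of automorphisms of proper free subgroups, together with the (standard but technical) passage to a finite-index subgroup to stabilize the components of $\mathcal{A}_{na}(\Lambda^+)$; once this bookkeeping is in place the argument is essentially formal, combining two known external inputs with the relative hyperbolicity output of Proposition \ref{relhypext}.
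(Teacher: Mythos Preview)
Your proposal is correct and follows essentially the same strategy as the paper: reduce via Proposition \ref{relhypext} to peripheral mapping tori, invoke the polynomial-growth case of Bridson--Groves on the terminal peripherals, and assemble via a relative Dehn-function combination result (you cite Osin, the paper cites Farb \cite{Fa-98}). The only organizational difference is that the paper first packages the iteration into Corollary \ref{GL}, which applies Proposition \ref{relhypext} repeatedly until all peripherals are already polynomially growing, thereby sidestepping an explicit induction on rank and the separate treatment of ranks $\leq 2$; your rank-drop claim is exactly what the paper uses to establish Corollary \ref{GL} (the components of $\mathcal{A}_{na}$ are proper free factors, together with cyclic pieces in the geometric case).
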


 \textbf{Acknowledgement:} The author would like to sincerely thank Pranab Sardar for several helpful discussions, feedback and for pointing out 
 Makura's work to the author. Also thanks to Derrik Wigglesworth for pointing out a flaw in the proof of conjugacy flaring and other helpful 
 comments from the first draft of this paper. Thanks to Jason Behrstock and Mark Hagen for their feedback, encouragement and many helpful discussions.
 
  The author was supported by SERB N-PDF grant PDF/2015/000038 during this project.

\section{Preliminaries }

We recall some of the basic notions and tools used in the study of $\out$. The definitions and results presented in this section 
have been developed over a significant period of time in \cite{BH-92}, \cite{BFH-00}, \cite{FH-11} by Bestvina, Feighn and Handel. 

\begin{description}

 \item \textbf{Marked graphs, circuits and path: } A \textit{marked graph} is a graph $G$ which has no valence 1 vertices and equipped with 
 a homotopy equivalence to the rose $m: G\to R_n$ (where $n = \text{rank}(\F)$). The fundamental group of $G$ therefore can be identified with 
 $\F$ up to inner automorphism. A \textit{circuit} in a marked graph is an immersion (i.e. locally injective and continuous map)
 of $S^1$ into $G$. The set of circuits in $G$ can be identified 
 with the set of conjugacy classes in $\F$. Similarly a path is an immersion of the interval $[0, 1]$ into $G$. Given any continuous map 
 from $S^1$ or $[0,1]$ it can be \textit{tightened} to a circuit or path, meaning the original map is freely homotopic to a locally injective and continuous 
 map from the respective domains. In fact, given any homotopically nontrivial and continuous map from $S^1$ to $G$, it can be tightened to a 
  unique circuit. We shall not distinguish between circuits or paths that differ by a homeomorphism of their respective domains.
 
 \item \textbf{Free factor systems:} Given a collection of finitely generated subgroups of $\F$, say $F^1, F^2, ..., F^k$, the collection of their conjugacy classes 
 denoted by $\{[F^1], [F^2],... , [F^k]\}$ is called a \textit{subgroup system}. We say that a circuit $[c]$ is carried by this subgroup 
 system if and only if there exists a representative $H^i$ of some $[F^i]$ such that $c \in H^i$.  The subgroup system is called a 
 \textit{malnormal subgroup system}  if $H^i \cap H^j = \emptyset$ for every $i\neq j$, where $H^i$ is any representative of $[F^i]$.
 
 A subgroup system $\{[F^1], [F^2],... , [F^k]\}$ is called a \textit{free factor system} if $\F= F^1 \ast F^2 \ast.....\ast F^k \ast B$, with $B$ possibly 
 trivial. Note that every free factor system is always a malnormal subgroup system. Note that given any subgraph $H\subset G$, the fundamental groups 
 of the noncontractible  components of $H$ gives rise to a free factor system.
 
 \item  \textbf{Weak topology: }
 Given any finite graph $G$, let $\widehat{\mathcal{B}}(G)$ denote the compact space of equivalence classes of circuits in $G$ and paths in $G$, whose endpoints (if any) are vertices of $G$. We give this space the \textit{weak topology}.
Namely, for each finite path $\gamma$ in $G$, we have one basis element $\widehat{N}(G,\gamma)$ which contains all paths and circuits in $\widehat{\mathcal{B}}(G)$ which have $\gamma$ as its subpath.
Let $\mathcal{B}(G)\subset \widehat{\mathcal{B}}(G)$ be the compact subspace of all lines in $G$ with the induced topology. One can give an equivalent description of $\mathcal{B}(G)$ following \cite{BFH-00}.
A line is completely determined, upto reversal of direction, by two distinct points in $\partial \mathbb{F}$, since there only one line that joins these two points. 
We can then induce the weak topology on the set of lines coming from the Cantor set $\partial \mathbb{F}$. More explicitly,
let $\widetilde{\mathcal{B}}=\{ \partial \mathbb{F} \times \partial \mathbb{F} - \vartriangle \}/(\mathbb{Z}_2)$, where $\vartriangle$ is the diagonal and $\mathbb{Z}_2$ acts by interchanging factors. We can put the weak topology on
$\widetilde{\mathcal{B}}$, induced by Cantor topology on $\partial \mathbb{F}$. The group $\mathbb{F}$ acts on $\widetilde{\mathcal{B}}$ with a compact but non-Hausdorff quotient space $\mathcal{B}=\widetilde{\mathcal{B}}/\mathbb{F}$.
The quotient topology is also called the \textit{weak topology}. Elements of $\mathcal{B}$ are called \textit{lines}. A lift of a line $\gamma \in \mathcal{B}$ is an element  $\widetilde{\gamma}\in \widetilde{\mathcal{B}}$ that
projects to $\gamma$ under the quotient map and the two elements of $\widetilde{\gamma}$ are called its endpoints.

One can naturally identify the two spaces $\mathcal{B}(G)$ and $\mathcal{B}$ by considering a homeomorphism between the two Cantor sets $\partial \mathbb{F}$ and set of ends of universal cover of $G$ , where $G$ is a marked graph.
$\out$ has a natural action on  $\mathcal{B}$. The action comes from the action of Aut($\mathbb{F}$) on $\partial \mathbb{F}$. Given any two marked graphs $G,G'$ and a homotopy equivalence $f:G\rightarrow G'$ between them, the induced map
$f_\#: \widehat{\mathcal{B}}(G)\rightarrow \widehat{\mathcal{B}}(G')$ is continuous and the restriction $f_\#:\mathcal{B}(G)\rightarrow \mathcal{B}(G')$ is a homeomorphism. With respect to the identification
$\mathcal{B}(G)\approx \mathcal{B}\approx \mathcal{B}(G')$, if $f$ preserves the marking then $f_{\#}:\mathcal{B}(G)\rightarrow \mathcal{B}(G')$ is equal to the identity map on $\mathcal{B}$. When $G=G'$, $f_{\#}$ agree with their
homeomorphism $\mathcal{B}\rightarrow \mathcal{B}$ induced by the outer automorphism associated to $f$.

A line(path) $\gamma$ is said to be \textit{weakly attracted} to a line(path) $\beta$ under the action of $\phi\in\out$, if the $\phi^k(\gamma)$ converges to $\beta$ in the weak topology. This is same as saying, for any given finite subpath of $\beta$, $\phi^k(\gamma)$
contains that subpath for some value of $k$; similarly if we have a homotopy equivalence $f:G\rightarrow G$,  a line(path) $\gamma$ is said to be \textit{weakly attracted} to a line(path) $\beta$ under the action of $f_{\#}$ if the $f_{\#}^k(\gamma)$ converges to $\beta$
in the weak topology.

 \item \textbf{EG strata and NEG strata:}
  
 A \textit{filtration} of a marked graph $G$ is a strictly increasing sequence of subgraphs 
 $G_0 \subset G_1 \subset \cdots \subset G_k = G$, each with no isolated vertices. 
 The individual terms $G_k$ are called \textit{filtration elements}, and if $G_k$ is a core graph (i.e. a graph without valence 1 vertices)
 then it is called a 
 \textit{core filtration element}. The subgraph $H_k = G_k \setminus G_{k-1}$ together with the vertices which occur as endpoints of edges in 
 $H_k$ is called the \textit{stratum of height $k$}.
 The \textit{height}\index{height} of subset of $G$ is the minimum $k$ such that the subset is contained in $G_k$. 
 The height of a map to $G$ is the height of the image of the map. 
 A \textit{connecting path} of a stratum $H_k$ is a nontrivial finite path $\gamma$ of height $< k$ whose endpoints are contained in 
 $H_k$.

Given a topological representative $f : G \to G$ of $\phi \in \out$, we say that $f$ \textit{respects} the filtration or 
that the filtration is \textit{$f$-invariant} if $f(G_k) \subset G_k$ for all $k$. If this is the case then we also say that the 
filtration is \textit{reduced} if for each free factor system $\mathcal{F}$ which is invariant under $\phi^i$ for some $i \ge 1$, if 
$[\pi_1 G_{r-1}] \sqsubset \mathcal{F} \sqsubset [\pi_1 G_r]$ then either $\mathcal{F} = [\pi_1 G_{r-1}]$ or $\mathcal{F} = [\pi_1 G_r]$.

Given an $f$-invariant filtration, for each stratum $H_k$ with edges $\{E_1,\ldots,E_m\}$, define
the \textit{transition matrix} of $H_k$ to be the square matrix whose $j^{\text{th}}$ column records the number of times 
$f(E_j)$ crosses the other edges. If $M_k$ is the zero matrix then we say that $H_k$ is a \textit{zero stratum}. 
If $M_k$ irreducible --- meaning that for each $i,j$ there exists $p$ such that the $i,j$ entry of the $p^{\text{th}}$ 
power of the matrix is nonzero --- then we say that $H_k$ is \textit{irreducible}; and if one can furthermore choose $p$ independently 
of $i,j$ then $H_k$ is \textit{aperiodic}. Assuming that $H_k$ is irreducible, by  Perron-Frobenius theorem, the matrix $M_k$ 
a unique eigenvalue $\lambda \ge 1$, called the \textit{Perron-Frobenius eigenvalue}, for which some associated eigenvector has 
positive entries: if $\lambda>1$ then we say that $H_k$ is an \textit{exponentially growing} or EG stratum; whereas if $\lambda=1$ 
then $H_k$ is a \textit{nonexponentially growing} or NEG stratum.

 \item \textbf{Relative train track maps: }
 Given $\phi\in\out$ a \textit{topological representative} is a homotopy equivalence $f:G\rightarrow G$ where $G$ is a marked graph, 
$f$ takes vertices to vertices and edges to edge-paths and $\overline{\rho}\circ f \circ \rho: R_n \rightarrow R_n$ represents $\phi$. A nontrivial path $\gamma$ in $G$ 
is a \textit{periodic Nielsen path} if there exists a $k$ such that $f^k_\#(\gamma)=\gamma$;
the minimal such $k$ is called the period and if $k=1$, we call such a path \textit{Nielsen path}. A periodic Nielsen path is \textit{indivisible} if it cannot 
be written as a concatenation of two or more nontrivial periodic Nielsen paths.

Given a subgraph $H\subset G$ let $G\setminus H$ denote the union of edges in $G$ that are not in $H$.

Given a marked graph $G$ and a homotopy equivalence $f:G\rightarrow G$ that takes edges to paths, one can define a new map $Tf$ by setting $Tf(E)$ 
to be the first edge in the edge path associated to $f(E)$; similarly let $Tf(E_i,E_j) = (Tf(E_i),Tf(E_j))$. So $Tf$ is a map that takes turns to turns. We say that a 
nondegenerate turn is \textit{illegal} if for some iterate of $Tf$ the turn becomes degenerate; otherwise the
 turn is legal. 

 \textbf{Relative train track map.} Given $\phi\in \out$ and a topological representative $f:G\rightarrow G$ with a filtration 
 $G_0\subset G_1\subset \cdot\cdot\cdot\subset G_k$ which is preserved by $f$, 
 a path is said to be \textit{legal} if it contains only legal turns and it is $r-$legal if it is of height $r$ and all its illegal turns are in $G_{r-1}$.
 The topological representative $f$ is a train relative train track map if every stratum is either a zero stratum or irreducible stratum and 
 additionally the following conditions are satisfied for every EG stratum $H_r$ :
 \begin{enumerate}
  \item $f$ maps r-legal paths to legal r-paths.
  \item If $\gamma$ is a path in $G$ of height $< r$ with its endpoints in $H_r$ then $f_\#(\gamma)$ has its end points in $H_r$.
  \item If $E$ is an edge in $H_r$ then $Tf(E)$ is an edge in $H_r$
 \end{enumerate}

 \item \textbf{Completely split train track maps: }
 Given relative train track map $f:G\rightarrow G$, \textit{splitting} of a line, path or a circuit $\gamma$ is a decomposition of $\gamma$ into subpaths $....\gamma_0\gamma_1 .....\gamma_k....  $ 
 such that for all $i\geq 1$ the path $f^i_\#(\gamma) =  .. f^i_\#(\gamma_0)f^i_\#(\gamma_1)...f^i_\#(\gamma_k)...$
 The terms $\gamma_i$ are called the \textit{terms} of the splitting of $\gamma$.

 Given two linear edges $E_1,E_2$ and a root-free closed Nielsen path $\rho$ such that $f_\#(E_i) = E_i.\rho^{p_i}$ then we say that $E_1,E_2$ are said to be in the \textit{same linear family} and any path of the form $E_1\rho^m\overline{E}_2$ for some integer $m$ is called an \textit{exceptional path}.

 \textbf{Complete splittings:} A splitting of a path or circuit $\gamma = \gamma_1\cdot\gamma_2......\cdot \gamma_k$ is called 
 \textit{complete splitting} if each term $\gamma_i$ falls into one of the following categories:
 \begin{itemize}
  \item $\gamma_i$ is an edge in some irreducible stratum.
  \item $\gamma_i$ is an indivisible Nielsen path.
  \item $\gamma_i$ is an exceptional path.
  \item $\gamma_i$ is a maximal subpath of $\gamma$ in a zero stratum $H_i$ and $\gamma_i$ is taken.
 \end{itemize}

 \textbf{Completely split improved relative train track maps}. A \textit{CT} or a completely split improved relative train track maps are topological representatives with particularly nice properties. But CTs do not exist for all outer automorphisms. 
 Only the \textit{rotationless} (see Definition 3.13 \cite{FH-11}) outer automorphisms are guaranteed to have a CT representative
 as has been shown in the following Theorem from \cite{FH-11}(Theorem 4.28).
 \begin{lemma}
  For each rotationless $\phi\in \out$ and each increasing sequence $\mathcal{F}$ of $\phi$-invariant free factor systems, there exists a CT $f:G\rightarrow G$ that is a topological
  representative for $\phi$ and $f$ realizes $\mathcal{F}$.
 \end{lemma}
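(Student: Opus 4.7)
My plan is to build on the already-established theory of (improved) relative train track representatives due to Bestvina--Handel and Bestvina--Feighn--Handel, rather than start from scratch. First I would invoke the existence theorem for improved relative train track (IRTT) maps from \cite{BFH-00} to obtain a topological representative $f_0\colon G_0\to G_0$ of $\phi$ whose invariant filtration refines the one induced by the nested sequence $\mathcal{F}$. Because $\phi$ is assumed to be rotationless, the periodic Nielsen paths produced by the IRTT machine are in fact honest (period-one) Nielsen paths, and every principal periodic vertex is fixed; this is exactly the input needed to get complete splittings to descend to a single iterate of $f$.

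From this starting point I would argue inductively up the filtration, modifying $f_0$ stratum by stratum by a sequence of elementary moves --- subdivisions of edges at carefully chosen $f$-preimages of vertices, slides of NEG edges, folds at illegal turns in lower strata, collapses of invariant forests, and insertion of subdivision points at endpoints of indivisible Nielsen paths --- each of which preserves the RTT property, the realization of $\mathcal{F}$, and the outer automorphism $\phi$. The goal at stratum $H_r$ is to arrange, for every edge $E\subset H_r$, that $f_\#(E)$ admits a splitting whose terms are edges of irreducible strata, indivisible Nielsen paths, exceptional paths, or maximal subpaths in zero strata that are taken. For EG strata one achieves this by cutting images at Nielsen endpoints and at returns into lower strata; for NEG strata one must correctly identify linear families sharing a common axis and build up the exceptional-path structure; for zero strata one must ensure they become contractible components of the filtration complement so that their maximal subpaths are automatically taken.

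The main obstacle I expect is coordinating these moves globally: a modification that establishes complete splittings in stratum $H_r$ can destroy properties already achieved in strata $H_{r'}$ with $r' < r$, for instance by altering turn combinatorics, or by creating new indivisible Nielsen paths, or by pushing an illegal turn up into a higher stratum. To handle this I would introduce a lexicographic complexity measure on representatives (counting, for instance, the number of illegal turns in each stratum, the number of linear edges not yet grouped into their family, and the number of $f$-orbits of splitting points still needed) and show that each elementary move strictly decreases this complexity while respecting the rotationless hypothesis. Termination of the process then gives the CT, and one finishes by verifying the clauses of the CT definition against the list of moves used. This combinatorial bookkeeping, and the verification that the moves interact well with $\mathcal{F}$, is the technically demanding part; it is essentially the content of Sections~4.1--4.3 of \cite{FH-11}, which I would therefore quote rather than reprove in detail.
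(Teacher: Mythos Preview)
The paper does not prove this lemma at all: it is stated as a quoted result, attributed to \cite{FH-11} (Theorem 4.28), and used as a black box. Your proposal sketches the Feighn--Handel argument and ultimately defers to Sections~4.1--4.3 of \cite{FH-11} for the hard combinatorics, so you end up in the same place as the paper --- citing \cite{FH-11} rather than giving an independent proof.
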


 The following results are some properties of CT's defined in Recognition theorem work of Feighn-Handel in \cite{FH-11}.
 We will state only the ones we need here.
 \begin{enumerate}\label{CT}
  \item \textbf{(Rotationless)} Each principal vertex is fixed by $f$ and each periodic direction at a principal vertex is fixed by $Tf$.
  \item \textbf{(Completely Split)} For each edge $E$ in each irreducible stratum, the path $f(E)$ is completely split.
  \item \textbf{(vertices)} The endpoints of all indivisible Nielsen paths are vertices. The terminal endpoint of each nonfixed NEG edge is principal.
  \item \textbf{(Periodic edges)} Each periodic edge is fixed.
  \item \textbf{(Zero strata)} Each zero strata $H_i$ is contractible and enveloped by a EG strata
  $H_s, s>i$, such that every edge of $H_i$ is a taken in $H_s$. Each vertex of $H_i$ is contained in $H_s$ and
  link of each vertex in $H_i$ is contained in $H_i \cup H_s$.
  \item \textbf{(Linear Edges)} For each linear edge $E_i$ there exists a root free indivisible Nielsen
  path $w_i$ such that $f_\#(E_i) = E_i w^{d_i}_i$ for some $d_i \neq 0$.
  \item \textbf{(Nonlinear NEG edges)} \cite[Lemma 4.21]{FH-11} Each non-fixed NEG stratum $H_i$ is a single edge with its
  NEG orientation and has a splitting $f_\#(E_i) = E_i\cdotp u_i$, where $u_i$ is a closed nontrivial  completely split
  circuit and is an indivisible Nielsen path if and only if $H_i$ is linear.
   \end{enumerate}

 We shall call any nonfixed, nonlinear NEG edge a \textit{superlinear NEG edge}. The advantage of using CT maps rather than using the 
 regular relative train track maps is the greater control that we get when we iterate the train track maps, which is very much a necessity 
 here.

 \item \textbf{Attracting laminations: }
 A closed subset of $\mathcal{B}=\mathcal{B}(\F)$ is called a \textit{lamination}. An element of a lamination is called a \textit{leaf}.
 The action of $\out$ on $\mathcal{B}$ induces an action on the set of laminations. 

For each marked graph $G$ the homeomorphism $\mathcal{B} \approx \mathcal{B}(G)$ induces a bijection between $\F$-laminations and 
closed subsets of $\mathcal{B}(G)$. The closed subset of $\mathcal{B}(G)$ corresponding to a lamination $\Lambda \subset \mathcal{B}$ 
is called the \textit{realization} of $\Lambda$ in~$G$; we will generally use the same notation $\Lambda$ for its realizations in marked graphs. 
Also, we occasionally use the term lamination to refer to $\F$-invariant, closed subsets of $\widetilde{\mathcal{B}}$; 
the orbit map $\widetilde{\mathcal{B}} \mapsto \mathcal{B}$ puts these in natural bijection with laminations in~$\mathcal{B}$. 

%A line $\ell \in \mathcal{B}$ is \textit{birecurrent} if $\ell$ is in the weak accumulation set of every subray of $\ell$. A lamination $\Lambda$ is \textit{minimal} if each of its leaves is dense in $\Lambda$. Every leaf of a minimal lamination is birecurrent, and a connected lamination in which every leaf is birecurrent is minimal.

Given $\phi \in \out$ and a lamination $\Lambda \subset \mathcal{B}$, we say that $\Lambda$ is an \textit{attracting lamination} 
for $\phi$ if there exists a leaf $\ell \in \Lambda$ satisfying the following: 
\begin{itemize}
 \item $\Lambda$ is the weak closure of $\ell$
 \item $\ell$ is a birecurrent
 \item $\ell$ is not the axis of the conjugacy class of a generator of a rank one free factor of $F_n$
 \item there exists $p \ge 1$ and a weak open set $U \subset \mathcal{B}$ such that $\phi^p(U) \subset U$ and such that $\{\phi^{kp}(U) 
\ni k \ge 1\}$ is a weak neighborhood basis of $\ell$
\end{itemize}

Any such leaf $\ell$ is called a \textit{generic leaf} of $\Lambda$, 
and any such neighborhood $U$ is called an \textit{attracting neighborhood of $\Lambda$ for the action of $\phi^p$}. 
Let $\mathcal{L}(\phi)$ denote the set of attracting laminations for~$\phi$.

Bestvina-Feighn-Handel \cite{BFH-00} showed that there is bijection between the elements of $\mathcal{L}(\phi)$ and the exponentially growing 
strata of $G$. In fact, they showed that given a relative train track map, there is a unique way of associating each attracting lamination to 
an exponentially growing stratum of $G$. Moreover, they established a bijection between the sets $\mathcal{L}(\phi)$ and $\mathcal{L}(\phi^{-1})$ by using the 
notion of \textit{free factor support}. In this particular case, one can think of a free factor support of an attracting lamination, 
$\Lambda^+\in\mathcal{L}(\phi)$, to be the conjugacy class of the smallest (in terms of subgroup inclusion) free factor that carries 
$\Lambda^+$. Two laminations $\Lambda^+\in\mathcal{L}(\phi)$ and $\Lambda^-\in\mathcal{L}(\phi^{-1})$ are said to be \textit{dual} if and only if they 
have the same free factor support. In \cite{BFH-00} it is shown that duality induces a bijection between the sets $\mathcal{L}(\phi)$ and 
$\mathcal{L}(\phi^{-1})$.

Given a rotationless outer automorphism $\phi\in\out$ we list a few properties which will be important for us.

\begin{lemma}\cite[Lemma 3.30, Corollary 3.31]{FH-11}
 For a rotationless $\phi\in\out$ the following are true:
 \begin{enumerate}
  \item If $F$ is a $\phi$  invariant free factor then ${\phi|}_{F}$ is rotationless.
  \item Each $\Lambda^+\in\mathcal{L}(\phi)$ is invariant under $\phi$.
  \item Every free factor, conjugacy class which is periodic under $\phi$ is fixed by $\phi$.
  
 \end{enumerate}

\end{lemma}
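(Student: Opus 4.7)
The plan is to combine the CT existence lemma just stated with the defining properties of rotationless automorphisms (Definition 3.13 of \cite{FH-11}): for a CT $f:G\to G$ representing a rotationless $\phi$, every principal vertex is fixed, every periodic direction at a principal vertex is fixed by $Tf$, and every periodic Nielsen path has period one. I would prove the three assertions by extracting or restricting an appropriate CT representative and pushing the rotationless data around.

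For assertion (1), I would apply the CT existence lemma to the sequence $\mathcal{F}=\{[F]\}$ of $\phi$-invariant free factor systems, obtaining a CT $f:G\to G$ that realizes $\mathcal{F}$ so that some core filtration element $G_r\subset G$ has $\pi_1 G_r$ conjugate to $F$. Then $f$ restricts to a homotopy equivalence $f|_{G_r}:G_r\to G_r$ which represents $\phi|_F$. Each of the CT-defining properties listed in \ref{CT} restricts verbatim to $f|_{G_r}$, so $f|_{G_r}$ is itself a CT. Any principal vertex, periodic direction, or periodic Nielsen path for $f|_{G_r}$ is automatically one for $f$, hence fixed; this is exactly the rotationless condition for $\phi|_F$.

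For assertion (2), recall that Bestvina--Feighn--Handel set up a natural bijection between $\mathcal{L}(\phi)$ and the set of EG strata of any CT representative $f:G\to G$ for $\phi$. Under this bijection, the action of $\phi$ on $\mathcal{L}(\phi)$ becomes the permutation that $f$ induces on the EG strata of $G$. But the filtration is $f$-invariant and $f$ sends each individual stratum (as a set of edges) to itself, so this permutation is trivial. Hence $\phi$ fixes every $\Lambda^+\in\mathcal{L}(\phi)$.

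For assertion (3), let $[c]$ be a conjugacy class with $\phi^k([c])=[c]$, represented by a circuit $c\subset G$ in a CT $f:G\to G$, so $f^k_\#(c)=c$. The strategy is to use complete splittings: $c$ admits a complete splitting into indivisible building blocks (edges of irreducible strata, indivisible Nielsen paths, exceptional paths, maximal subpaths of zero strata), and $f_\#$ permutes these building blocks with each orbit dividing $k$. Items (1), (3), (4), (6), and (7) of \ref{CT} combined with the rotationless hypothesis (period-one periodic Nielsen paths and fixed periodic directions) force each building block to be $f_\#$-fixed, so $f_\#(c)=c$ and $\phi([c])=[c]$. For a periodic free factor $[F]$ of period $k$, the step I expect to be the main obstacle is upgrading from periodicity of conjugacy classes and laminations to periodicity of the free factor itself; my plan is to choose a nontrivial conjugacy class $[c]\subset F$ together with the collection of attracting laminations of $\phi|_F$ whose free factor support is exactly $[F]$, observe that all of this data is fixed by $\phi$ using part (2) and the conjugacy-class case just proved, and conclude $\phi([F])=[F]$ by the uniqueness of free factor supports.
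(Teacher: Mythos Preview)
The paper does not prove this lemma at all: it is quoted verbatim as \cite[Lemma 3.30, Corollary 3.31]{FH-11} and no argument is supplied. So there is no ``paper's own proof'' to compare your proposal against; the author is simply importing the result from Feighn--Handel.

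That said, a few comments on your sketch as a stand-alone argument. For (1), showing that $f|_{G_r}$ inherits the CT axioms is not the same as showing $\phi|_F$ is rotationless: rotationless is defined in \cite{FH-11} via principal automorphisms (Definition~3.13), not via the existence of a CT. You would need to invoke the equivalence of ``rotationless'' and ``forward rotationless'' (Proposition~3.29 of \cite{FH-11}) to close the loop, and you should say so explicitly. For (3), a periodic circuit $c$ need not itself be completely split; you must first replace $c$ by some $f^n_\#(c)$ (which is still periodic of the same period) before your building-block argument applies. Also, ``$f_\#$ permutes the building blocks'' is only true once you observe that periodicity forces the number of terms in the complete splitting to be constant under iteration, so no term can split further; this deserves a sentence. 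Finally, your free-factor argument in (3) is quite loose: not every periodic free factor is the free factor support of a lamination or of a single conjugacy class, so ``uniqueness of free factor supports'' does not obviously pin down $[F]$ from data you have already shown to be fixed. Feighn--Handel's actual proof of this item goes through the structure of principal automorphisms rather than through supports.
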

 
\end{description}

\subsection{Relatively hyperbolic groups: }
Given a group $\Gamma$ and a collection of subgroups $H_\alpha < \Gamma$, the \textit{coned-off Cayley graph} of $\Gamma$ or 
  the \textbf{electric space} of $\Gamma$ relative to the collection $\{H_\alpha\}$ is a metric space which consists of the 
  Cayley graph of $\Gamma$ and a collection of vertices $v_\alpha$ (one for each $H_\alpha$) such that each point of 
  $H_\alpha$ is joined to (or coned-off at) $v_\alpha$ by an edge of length $1/2$. The resulting metric space is 
  denoted by $(\widehat{\Gamma}, {|\cdot|}_{el})$.

  A group $\Gamma$ is said to be \textit{(weakly) hyperbolic} relative to a \emph{finite} collection of \emph{finitely-generated} subgroups 
  $\{H_\alpha\}$ if $\widehat{\Gamma}$ is a $\delta-$hyperbolic metric space, in the sense of Gromov.
  $\Gamma$ is said to be strongly hyperbolic relative to the collection $\{H_\alpha\}$ if the coned-off 
  space $\widehat{\Gamma}$ is weakly hyperbolic relative to $\{H_\alpha\}$ and it satisfies the 
  \textit{bounded coset penetration} property (see \cite{Fa-98}). 
  But 
this bounded coset penetration property is a very hard condition to check for random groups $\Gamma$. 
However it is well known that if the group $G$ is hyperbolic and the collection of subgroups $\{H_\alpha\}$  is 
\textit{mutually malnormal} and \textit{quasiconvex} then $\widehat{H}$ is strongly relatively hyperbolic. We shall be using this result 
for our constructions here.

In our main result, Proposition \ref{relhypext}, the bounded coset penetration property is verified by the cone-bounded hallways strictly 
flare condition due to \cite{MjR-08}. It was shown in \cite{MjR-08} that this flaring property establishes a condition (namely, \textit{mutual coboundedness})
due to Bowditch \cite{Bow-97} which implies the strong relative hyperbolicity.

\section{Exponential growth case}
\label{exp}

 We begin this section by recalling the construction of the nonattracting subgroup system and list few of the properties we will be using.
 We give the definitions and some results about the two key 
 concepts, nonattracting subgroup system and weak attraction theorem,
 from the subgroup decomposition work of Handel and Mosher\cite[Section 1 and Theorem F]{HM-13c} which are central to the proofs in this paper. 
 
 Recall from \cite{FH-11} that there exists some $K>0$ such that given any $\phi\in\out$, $\phi^K$ is \textit{rotationless}. 
 Hence given any $\phi$ we may pass on to a rotationless power to make use of the rich CT structure. We show that the mapping torus of 
 $\phi^K$ is relatively hyperbolic. Then using Drutu's work \cite{Dr-09} we conclude that the mapping torus of $\phi$ is also relatively hyperbolic, since 
 mapping torus of $\phi^K$ is quasi-isometric to mapping torus of $\phi$.
 
\textbf{Topmost lamination: } For an outer automorphism $\phi$, we call an attracting lamination $\Lambda$ to be \textit{topmost} 
if there are no attracting lamination of $\phi$ that contain $\Lambda$ as a proper subset. 

It is easy to see that for any exponentially growing outer automorphism, if we choose a relative train track map, then the attracting lamination associated to 
the highest stratum is always topmost. So every exponentially growing outer automorphism has at least one topmost attracting lamination.
From \cite[Corollary 6.0.1]{BFH-00} we know that if $\Lambda^\pm$ is a dual lamination pair for $\phi$, then $\Lambda^+$ is topmost lamination for 
$\phi$ if and only if $\Lambda^-$ is topmost for $\phi^{-1}$.

 \subsection{Nonattracting subgroup system}
 The \textit{nonattracting subgroup system} of an attracting lamination contains information about lines and circuits which are not attracted to the lamination.
This is a crucial construction from the train-track theory that lies in the heart of our proof here. First introduced by Bestvina-Feighn-Handel in their Tit's alternative work \cite{BFH-00}, 
it was later studied in more details by Handel-Mosher in \cite{HM-13c}. We urge the reader unfamilar with this construction to look into \cite{HM-13c} where it has been 
explored in great detail. 

 The construction of the \textbf{nonattracting subgraph} is as following:

%\begin{definition}
 Suppose $\phi\in\out$ is rotationless and $f:G\rightarrow G$ is a CT representing $\phi$ such that $\Lambda^+_\phi$ is an invariant attracting lamination which corresponds to the EG stratum $H_s\in G$.
 The \textit{nonattracting subgraph Z} of $G$ is defined as a union of irreducible stratas $H_i$ of $G$ such that no edge in $H_i$ is weakly attracted to $\Lambda^+_\phi$. This is equivalent to saying that a strata $H_r\subset G\setminus Z$ if and only if there exists $k\geq 0$
 some term in the complete splitting of $f^k_\#(E_r)$ is an edge in $H_s$. 
 
 Define the path $\widehat{\rho}_s$ to be trivial path at any chosen vertex if there does not exist any indivisible Nielsen path of height $s$, otherwise $\widehat{\rho}_s$ 
 is the unique closed indivisible path of height $s$ (from definition of stable train track maps).
%\end{definition}

\textbf{The groupoid  $\langle \mathcal{Z}, \widehat{\rho}_s \rangle$ - } Let $\langle \mathcal{Z}, \widehat{\rho}_s \rangle$ be the set of lines, rays, circuits and finite paths in $G$ which can be written as a concatenation of subpaths, each of which is an edge in $Z$, the path $\widehat{\rho}_s$ or its inverse. Under the operation of tightened concatenation of paths in $G$,
this set forms a groupoid (Lemma 5.6, [\cite{HM-13c}]). We say that a path, circuit, ray or line is \textit{\textbf{carried by}} $\langle\mathcal{Z}, \hat{\sigma}\rangle$ if 
it can be written as a concatenation of paths in $\langle\mathcal{Z}, \hat{\sigma}\rangle$.

%Define the graph $K$ by setting $K=Z$ if $\widehat{\rho}_s$ is trivial. Otherwise define an edge $E_\rho$ representing

Define the graph $K$ by setting $K=Z$ if $\widehat{\rho}_s$ is trivial and let $h:K \rightarrow G$ be the inclusion map. Otherwise define an edge $E_\rho$ representing the domain of the Nielsen path $\rho_s:E_\rho \rightarrow G_s$, and let $K$ be the disjoint union of $Z$ and $E_\rho$ with the following identification.
 Given an endpoint $x\in E_\rho$, if $\rho_s(x)\in Z$ then identify $x\sim\rho_s(x)$.Given distinct endpoints $x,y\in E_\rho$, if $\rho_s(x)=\rho_s(y)\notin Z$ then identify $x \sim y$. In this case define $h:K\rightarrow G$ to be the inclusion map on $K$ and the map $\rho_s$ on $E_\rho$. It is not difficult to see that the map $h$ is an immersion.
 Hence restricting $h$ to each component of $K$, we get an injection at the level of fundamental groups. The \textbf{nonattracting subgroup system} $\mathcal{A}_{na}(\Lambda^+_\phi)$ is defined to be the subgroup system defined by this immersion.

We will leave it to the reader to look it up in \cite{HM-13c} where it is explored in details. We however list some key properties which we will be using and justifies the importance of this
subgroup system.
\begin{lemma}(\cite{HM-13c}- Lemma 1.5, 1.6)
\label{NAS}
 \begin{enumerate}
 \item A line or conjugacy class is carried by $\langle\mathcal{Z}, \hat{\sigma}\rangle$ if and only if it is carried by $\mathcal{A}_{na}(\Lambda^+_\phi)$.
  \item The set of lines carried by $\mathcal{A}_{na}(\Lambda^+_\phi)$ is closed in the weak topology.
  \item A conjugacy class $[c]$ is not attracted to $\Lambda^+_\phi$ if and only if it is carried by $\mathcal{A}_{na}(\Lambda^+_\phi)$.
  \item $\mathcal{A}_{na}(\Lambda^+_\phi)$ does not depend on the choice of the CT representing $\phi$.
  \item  Given $\phi, \phi^{-1} \in \out$ both rotationless elements and a dual lamination pair $\Lambda^\pm_\phi$ we have $\mathcal{A}_{na}(\Lambda^+_\phi)= \mathcal{A}_{na}(\Lambda^-_\phi)$
  \item $\mathcal{A}_{na}(\Lambda^+_\phi)$ is a free factor system if and only if the stratum $H_r$ is not geometric.
  \item $\mathcal{A}_{na}(\Lambda^+_\phi)$ is a malnormal subgroup system.
  %\item If $\{\gamma_n\}_{n\in\mathbb{N}}$ is a sequence of lines such that every weak limit of every subsequence of $\{\gamma_n\}$ is carried by $\mathcal{A}_{na}(\Lambda_\phi)$ then $\{\gamma_n\}$ is carried by $\mathcal{A}_{na}(\Lambda_\phi)$ for all sufficiently large $n$

 \end{enumerate}

\end{lemma}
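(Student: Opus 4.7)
My plan is to verify each item essentially by unpacking the construction of $\mathcal{A}_{na}(\Lambda^+_\phi)$ through the immersion $h:K\to G$ and combining it with the completely split structure of the CT $f:G\to G$. The overarching strategy is that the groupoid $\langle\mathcal{Z},\widehat{\rho}_s\rangle$ is designed so that carried objects are exactly those lines/circuits whose complete splittings avoid all edges in $G\setminus Z$ except through the Nielsen path $\widehat{\rho}_s$, while $\mathcal{A}_{na}(\Lambda^+_\phi)$ is exactly the subgroup system realized by the fundamental groups of components of $K$. Items (1)--(7) are all essentially different faces of this dictionary.

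For item (1), I would use that $h:K\to G$ is an immersion: an immersion lifts locally, so a line in $G$ factors through $K$ iff every finite subpath does, which is the definition of being carried by the groupoid. Item (2) follows because ``carried by a given finite groupoid of subpaths'' is a condition that can be checked on finite subpaths, and the intersection over all finite subpaths of the corresponding weak-open cylinders is closed. For item (4), independence from the CT follows from noting that $\mathcal{A}_{na}$ can be characterized intrinsically by item (3), which does not mention $f:G\to G$. Item (7) follows from the immersion property of $h$: if two conjugates of fundamental groups of components of $K$ shared a nontrivial element, one could lift a common axis to two places in $K$, contradicting that $h$ is an immersion.

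The central step is item (3), and this is where the CT structure does the real work. For the forward direction, if $[c]$ is carried, realize it as a circuit in $G$ with all its edges either in $Z$ or in $\widehat{\rho}_s$; by definition of $Z$ no edge of $Z$ ever contributes an $H_s$-edge in any $f^k_\#$ image, and $\widehat{\rho}_s$ is a Nielsen path (hence fixed), so $f^k_\#(c)$ never grows an $H_s$-segment and $[c]$ is not attracted to $\Lambda^+_\phi$. Conversely, assume $[c]$ is not attracted; passing to a large iterate so that $f^k_\#(c)$ admits a complete splitting, I would argue that no term of this complete splitting can be an edge in $H_s$ (else generic leaves of $\Lambda^+_\phi$ appear as subpaths of higher iterates, contradicting non-attraction), and the only completely-split terms of height $s$ that can appear are copies of $\widehat{\rho}_s$; the remaining terms lie in $Z$ by definition. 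Item (5) follows from item (3) applied to both $\phi$ and $\phi^{-1}$, together with the fact that $[c]$ is attracted to $\Lambda^+$ under $\phi$ iff it is attracted to $\Lambda^-$ under $\phi^{-1}$ for dual pairs.

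The main obstacle, and what makes these statements nontrivial despite their clean phrasing, is item (3) in the non-attracted $\Rightarrow$ carried direction and item (6). For (6), one must analyze when the graph $K$ fails to give a free factor system, and this is exactly when the identifications in the construction force $\pi_1(K)$-components to become non-free-factor — which Handel--Mosher show happens precisely in the geometric case, where $\widehat{\rho}_s$ together with the attaching circles of $Z$ realizes the boundary of an invariant surface subgroup. Since all of (1)--(7) are drawn directly from Handel--Mosher's Lemmas 1.5 and 1.6 in \cite{HM-13c}, in practice the proof would be a citation; my plan above describes the conceptual content one would reconstruct if writing the arguments in full.
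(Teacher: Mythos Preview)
Your proposal is correct and aligns with the paper's treatment: the paper gives no proof of this lemma at all, stating explicitly ``We will leave it to the reader to look it up in \cite{HM-13c} where it is explored in details,'' and you likewise conclude that in practice the proof is a citation to Handel--Mosher.

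One small caution about your sketch of item (5): the claim ``$[c]$ is attracted to $\Lambda^+$ under $\phi$ iff it is attracted to $\Lambda^-$ under $\phi^{-1}$ for dual pairs'' is not an a priori fact you can invoke --- combined with item (3) it is essentially equivalent to item (5) itself, so the argument as written is circular. Handel--Mosher's actual proof of (5) is more delicate and does not go through this equivalence directly. Since you are ultimately citing rather than reproving, this does not affect the validity of your proposal, but it is worth knowing if you ever expand the sketch.
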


\subsection{Weak attraction theorem }
\label{sec:9}
\begin{lemma}[\cite{HM-13c} Corollary 2.17]
\label{WAT}
 Let $\phi\in \out$ be a rotationless and exponentially growing. Let $\Lambda^\pm_\phi$ be a dual lamination pair for $\phi$. Then for any line $\gamma\in\mathcal{B}$ not carried by $\mathcal{A}_{na}(\Lambda^{\pm}_\phi)$ at least one of the following hold:
\begin{enumerate}
 \item $\gamma$ is attracted to $\Lambda^+_\phi$ under iterations of $\phi$.
   \item $\gamma$ is attracted to $\Lambda^-_\phi$ under iterations of $\phi^{-1}$.
\end{enumerate}
Moreover, if $V^+_\phi$ and $V^-_\phi$ are attracting neighborhoods for the laminations $\Lambda^+_\phi$ and $\Lambda^-_\phi$ respectively, there exists an integer $l\geq0$ such that at least one of the following holds:
\begin{itemize}
 \item $\gamma\in V^-_\phi$.
\item $\phi^l(\gamma)\in V^+_\phi$
\item $\gamma$ is carried by $\mathcal{A}_{na}(\Lambda^{\pm}_\phi)$. 
\end{itemize}

\end{lemma}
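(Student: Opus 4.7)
The plan is to work at the level of complete splittings afforded by CT representatives. Replace $\phi$ by a rotationless power and fix a CT $f : G \to G$ for $\phi$ realizing $\Lambda^+_\phi$ as the attracting lamination of an EG stratum $H_s$, with associated nonattracting subgraph $Z$ and (possibly trivial) closed indivisible Nielsen path $\widehat{\rho}_s$. Analogously fix a CT $f' : G' \to G'$ for $\phi^{-1}$ with top EG stratum corresponding to $\Lambda^-_\phi$, together with its nonattracting subgraph $Z'$ and Nielsen path $\widehat{\rho}'_{s'}$. Realize $\gamma$ as a line in $G$ and in $G'$.

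The central technical step is the following dichotomy for the CT $f$: either some iterate $f^k_\#(\gamma)$ contains a subpath of a generic leaf of $\Lambda^+_\phi$ of any prescribed length, in which case $f^k_\#(\gamma)$ lies in any chosen weak attracting neighborhood $V^+_\phi$; or every iterate $f^k_\#(\gamma)$ decomposes entirely as a concatenation of edges of $Z$ and copies of $\widehat{\rho}_s^{\pm 1}$, i.e.\ lies in the groupoid $\langle \mathcal{Z}, \widehat{\rho}_s \rangle$. In the first branch, choosing $l = k$ gives conclusion (1) and the second bullet of the \emph{moreover} statement. In the second branch, taking $k = 0$ shows $\gamma \in \langle \mathcal{Z}, \widehat{\rho}_s \rangle$, so by Lemma \ref{NAS}(1) the line $\gamma$ is carried by $\mathcal{A}_{na}(\Lambda^+_\phi)$. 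Running the symmetric analysis with $f'$ and $\phi^{-1}$, the non-carriage hypothesis together with the equality $\mathcal{A}_{na}(\Lambda^+_\phi) = \mathcal{A}_{na}(\Lambda^-_\phi)$ (Lemma \ref{NAS}(5)) rules out the possibility that both dichotomies fall in the second branch; hence at least one of the attraction alternatives (1)/(2) must hold. The \emph{moreover} trichotomy is then just a refinement recording the quantitative output of each branch, where the first bullet $\gamma \in V^-_\phi$ absorbs the case where $\gamma$ is already in a prescribed $\phi^{-1}$-attracting neighborhood.

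The main obstacle is the dichotomy itself. Concretely, one must show that any obstruction to membership in $\langle \mathcal{Z}, \widehat{\rho}_s \rangle$, namely a subpath of some $f^k_\#(\gamma)$ crossing an edge of $H_s$ but not lying inside a copy of $\widehat{\rho}_s^{\pm 1}$, is amplified under further iteration by the Perron-Frobenius growth of $H_s$ into an arbitrarily long subpath of a generic leaf of $\Lambda^+_\phi$. The key inputs are the CT properties listed in \ref{CT}: property (2) guarantees that for each $E \in H_s$ the path $f(E)$ has a complete splitting whose $H_s$-edge terms drive the exponential growth; property (5) controls the contractible zero strata enveloped by $H_s$ so that they do not produce cancellations obstructing the growth; and property (3) together with the indivisibility of $\widehat{\rho}_s$ prevents the iterates from re-entering the groupoid through spurious Nielsen cancellations. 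The careful bookkeeping of turns between $H_s$-edges across iterations, which ensures the emerging subpath actually realizes the generic-leaf structure of $\Lambda^+_\phi$, is exactly what is carried out by Handel and Mosher in \cite{HM-13c}.
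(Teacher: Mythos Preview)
The paper does not prove this lemma; it is stated as a citation of \cite[Corollary 2.17]{HM-13c} and used as a black box. So there is no ``paper's own proof'' to compare against here. That said, your sketch has a genuine gap that is worth naming.

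Your central dichotomy---``either some $f^k_\#(\gamma)$ contains arbitrarily long generic-leaf segments of $\Lambda^+_\phi$, or $\gamma$ is carried by $\langle\mathcal{Z},\widehat{\rho}_s\rangle$''---is correct for circuits (this is essentially Lemma~\ref{NAS}(3)) but \emph{false for lines}. A generic leaf $\ell^-$ of $\Lambda^-_\phi$ is a counterexample: it is not carried by $\mathcal{A}_{na}(\Lambda^\pm_\phi)$ (indeed $\mathcal{A}_{na}(\Lambda^-_\phi)=\mathcal{A}_{na}(\Lambda^+_\phi)$ and $\Lambda^-_\phi$ is certainly not carried by its own nonattracting system), yet $\ell^-$ is not weakly attracted to $\Lambda^+_\phi$ under $\phi$, since $\phi$ preserves $\Lambda^-_\phi$ setwise and $\Lambda^+_\phi\neq\Lambda^-_\phi$. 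So running your dichotomy separately for $f$ and for $f'$ and then intersecting cannot work: it would prove that every line not carried by $\mathcal{A}_{na}$ is attracted to \emph{both} $\Lambda^+_\phi$ and $\Lambda^-_\phi$, which is too strong and indeed false. The actual argument in \cite{HM-13c} does not reduce to two independent one-sided dichotomies; it genuinely couples the forward and backward dynamics, and the first bullet $\gamma\in V^-_\phi$ in the trichotomy is precisely what absorbs lines like $\ell^-$ that fail the forward dichotomy without being carried by $\mathcal{A}_{na}$.

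A second issue: the integer $l$ in the \emph{moreover} clause is meant to be uniform in $\gamma$ (depending only on the chosen neighborhoods $V^\pm_\phi$); this is how the lemma is invoked later in the paper, e.g.\ in the paragraph preceding Lemma~\ref{legality}. Your ``choose $l=k$'' with $k$ depending on $\gamma$ does not address this uniformity at all, and it is one of the substantive points of the Handel--Mosher proof.
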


 \subsection{Free-by-cyclic extensions for exponentially growing $\phi$}
 The method of proof followed in this work was developed by the author in \cite{self2}, where examples of free-by-free (strongly) relatively hyperbolic 
 extensions are constructed, which in turn was inspired by the work of Bestvina-Feighn-Handel in \cite{BFH-97}, where they construct free-by-free hyperbolic extensions.

 For the rest of the section we will assume that $\phi\in\out$ is an exponentially growing and rotationless outer automorphism. 
Let $\Lambda^\pm_\phi$ be a dual lamination pair associated to this automorphism.
Also let $f:G\to G$ be a CT map representing  $\phi$ and $H_r$ be the \textit{unique} exponentially growing strata associated to 
$\Lambda^+_\phi$ and $\mathcal{A}_{na}(\Lambda^+_\phi)$ be the nonattracting subgroup system of $\Lambda^+_\phi$. Recall that by 
construction, any conjugacy class is not weakly attracted to $\Lambda^+_\phi$ under iterates of $\phi$ if and only if it is carried by 
$\mathcal{A}_{na}(\Lambda^+_\phi$). Similarly let $f': G' \to G'$ be a CT representing $\phi^{-1}$ and $H'_s$ be the unique exponentially growing strata in $G'$ that is associated to
$\Lambda^-_\phi$.

We recall the notion of critical constant from \cite{BFH-97}.
 
 \textbf{Critical Constant: }
   Let $f:G\to G$ be a stable relative train track representative for some exponentially growing  
  $\phi\in\out$ with $H_r$ being an exponentially growing strata with associated Perron-Frobenius eigenvalue $\lambda$. 
  If $BCC(f)$ denotes the bounded cancellation constant for $f$, then the number $\frac{2BCC(f)}{\lambda-1}$ is called 
  the \textit{critical constant} for $f$. It can be easily seen that for every number $C>0$ that exceeds the 
  critical constant, there is some $\mu>0$ such that if $\alpha\cdot\beta\cdot\gamma$ is a concatenation of r-legal paths where 
   $\beta $ is some r-legal segment of length $\geq C$, then the r-legal leaf segment of 
   $f^k_\#(\alpha\cdot\beta\cdot\gamma)$ corresponding to $\beta$ has length $\geq \mu\lambda^k{|\beta|}_{H_r}$.
  To summarize, if we have a path in $G$ which has some r-legal ``central'' subsegment of length greater than the
    critical constant, then this segment is protected by the bounded cancellation lemma and under iteration, length of this segment grows exponentially.

 Following the work of Bestvina-Feighn-Handel \cite{BFH-97} we define the following notion of \textit{legality} for any number $C>0$ which exceeds 
 the critical constant for $f$.
 
 \textbf{Notation: } For any path $\alpha$ in $G$ let ${|\alpha|}_{\langle\mathcal{Z}, \hat{\sigma}\rangle}$ denote the edge length of $\alpha$ in $G$
 relative to ${\langle\mathcal{Z}, \hat{\sigma}\rangle}$, i.e. length of $\alpha$ in $G$, not counting 
 the copies of subpaths of $\alpha$ which are carried by ${\langle\mathcal{Z}, \hat{\sigma}\rangle}$, where $\sigma$ is the 
 unique indivisible Nielsen path of height $r$ (if it exists). In what follows $\rho$ will denote the 
 unique indivisible Nielsen path of height $s$ in the CT map $f':G'\to G'$ for $\phi^{-1}$. From train track theory we know that 
 the conjugacy classes of $\rho$ and $\sigma$ are same upto reversal, since the laminations associated to the strata $H_r$ and $H'_s$ are 
 dual to each other. Also recall from Bestvina-Feighn-Handel train track theory that $\sigma$ has exactly one illegal turn in $H_r$ and hence does not occur as a subpath of 
 any generic leaf of $\Lambda^+_\phi$.
 
 \begin{definition}
  For any circuit $\alpha$ in $G$, the $H_{r}$-legality of $\alpha$ is defined as the ratio 
  $${LEG}_{H_r^\sigma}(\alpha):= \frac{\text{sum of lengths of r-legal generic leaf segments of } \alpha \text{ of length} \geq C}{{|\alpha|}_{\langle\mathcal{Z}, \hat{\sigma}\rangle}}$$
 if ${|\alpha|}_{\langle\mathcal{Z}, \hat{\sigma}\rangle}\neq 0$. Otherwise, if ${|\alpha|}_{\langle\mathcal{Z}, \hat{\sigma}\rangle}=0$, define ${LEG}_{H_r^\sigma}(\alpha)=0$.
 \end{definition}

 For the rest of the paper, fix some $C$ greater than the critical constants of $f, f'$.
 
 Now we choose a long generic leaf segment $\gamma^+$ of some generic leaf of $\Lambda^+_\phi$, we may assume that 
 ${|\gamma^+|}_{\langle\mathcal{Z}, \hat{\sigma}\rangle}\gg C$, 
 to define a weak attracting neighborhood $V^+_\phi$ for $\Lambda^+_\phi$.
 Similarly choose a long generic leaf segment $\gamma^-$ such that ${|\gamma^-|}_{\langle\mathcal{Z}', \hat{\rho}\rangle}\gg C$ 
 and define an attracting neighborhood $V^-_\phi$. The weak attraction theorem tells us that for any conjugacy class 
 $\alpha$ that is not carried by $\mathcal{A}_{na}(\Lambda^\pm_\phi)$, there exists some $M>0$ such that 
 either $\alpha\in V^-_\phi$ or $\phi^m_\#(\alpha)\in V^+_\phi$ for every $m\geq M$. The following lemma shows that such an $M$ gives us enough legality 
  to eventually supersede  ${|\alpha|}_{\langle\mathcal{Z}, \hat{\sigma}\rangle}$.
  
  \begin{lemma}\label{legality}
 Suppose $\phi\in\out$ is rotationless and exponentially growing with a lamination pair $\Lambda^\pm_\phi$ which are topmost for 
 $\phi, \phi^{-1}$.
 Then there exists $\epsilon>0$ and some $M>0$ such that for every conjugacy class 
 $\alpha$ not carried by $\mathcal{A}_{na}(\Lambda^\pm_\phi)$, either ${LEG}_{H_r^\sigma}(\phi^m_\#(\alpha)) \geq \epsilon$ or 
 ${LEG}_{H_s^{'\rho}}(\phi^{-m}_\#(\alpha))\geq \epsilon$ for every $m\geq M$.
\end{lemma}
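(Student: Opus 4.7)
The plan is to combine the weak attraction theorem (Lemma \ref{WAT}) with the critical-constant expansion of r-legal paths under iteration of the CT maps $f:G\to G$ and $f':G'\to G'$. First, for any non-carried conjugacy class $\alpha$, Lemma \ref{WAT} applied to the attracting neighborhoods $V^+_\phi$ and $V^-_\phi$ (defined via the long generic leaf segments $\gamma^\pm$) supplies an integer $l_\alpha \geq 0$ such that either $\alpha \in V^-_\phi$ or $\phi^{l_\alpha}_\#(\alpha) \in V^+_\phi$. To upgrade this to a uniform $M$ independent of $\alpha$, I would invoke compactness of $\mathcal{B}$ in the weak topology together with Lemma \ref{NAS}(2), which states that the set of lines carried by $\mathcal{A}_{na}(\Lambda^\pm_\phi)$ is closed: the family $\{V^-_\phi\} \cup \{\phi^{-j}(V^+_\phi) : j \geq 0\}$ is an increasing open cover of the complement of the closed carried set, so a finite-subcover argument (relative to a suitable open neighborhood of the carried set) produces a uniform attraction time $M_0$.

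Second, once $\phi^{M_0}_\#(\alpha)$ contains $\gamma^+$ as a subpath (the WLOG case; the other case is symmetric, using $\phi^{-1}$, $f'$, and $\gamma^-$), I would decompose $\phi^m_\#(\alpha) = \phi^{m-M_0}_\#(\phi^{M_0}_\#(\alpha))$ for $m\geq M_0$. Since $\gamma^+$ is r-legal with $|\gamma^+|_{H_r}$ far exceeding the critical constant $C$, the bounded cancellation lemma guarantees that a central r-legal generic leaf subsegment of $\phi^{m-M_0}_\#(\gamma^+)$ survives inside $\phi^m_\#(\alpha)$, with length at least $\mu \lambda^{m-M_0} |\gamma^+|_{H_r}$. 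This gives the lower bound on the numerator of ${LEG}_{H_r^\sigma}(\phi^m_\#(\alpha))$.

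Third, for the denominator ${|\phi^m_\#(\alpha)|}_{\langle\mathcal{Z}, \hat{\sigma}\rangle}$ I would exploit that $\Lambda^+_\phi$ is topmost: $H_r$ is the top EG stratum carrying the Perron--Frobenius eigenvalue $\lambda$, while every stratum contributing to the groupoid $\langle\mathcal{Z}, \hat{\sigma}\rangle$ has strictly smaller growth. Combined with the complete-splitting structure of the CT $f$ (so that the non-carried portion of $\phi^m_\#(\alpha)$ decomposes into iterates of edges in $H_r$ together with indivisible Nielsen paths of height $\leq r$), this forces the non-carried length to grow at rate $\lambda^m$ and to be asymptotically dominated by r-legal generic leaf segments. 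Dividing numerator by denominator then yields a uniform $\epsilon > 0$, depending only on the CT data, $\gamma^\pm$, and $C$, with ${LEG}_{H_r^\sigma}(\phi^m_\#(\alpha)) \geq \epsilon$ for all $m\geq M_0$.

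The hardest part is securing the uniformity of both $M$ and $\epsilon$ over conjugacy classes whose non-carried portion is very small or whose attraction time $l_\alpha$ is very large. The first difficulty is handled by the compactness/closedness argument in the opening paragraph, which bounds the attraction time uniformly on the complement of a suitable neighborhood of the carried set. The second is handled by the strict domination of $\lambda$ over the growth rates inside $\langle\mathcal{Z}, \hat{\sigma}\rangle$, which ensures that after finitely many iterations the r-legal generic leaf segments overtake any initial carried ``ballast.'' Choosing $|\gamma^\pm|$ much larger than $C$ at the outset so that $V^\pm_\phi$ are sufficiently deep attracting neighborhoods is precisely what allows both steps to be combined into a single uniform estimate, yielding the stated $M$ and $\epsilon$.
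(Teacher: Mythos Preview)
Your direct approach has a genuine gap, and it differs fundamentally from the paper's proof, which is by contradiction.

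The core problem is in your second and third steps. Knowing that $\phi^{M_0}_\#(\alpha)$ contains a \emph{single} copy of $\gamma^+$ gives you a lower bound on the numerator of ${LEG}_{H_r^\sigma}$, but it says nothing about the denominator $|\phi^{M_0}_\#(\alpha)|_{\langle\mathcal{Z}, \hat{\sigma}\rangle}$, which can be arbitrarily large depending on $\alpha$. A circuit can have enormous non-carried length while containing many illegal turns in $H_r$, so that most of its $\langle\mathcal{Z}, \hat{\sigma}\rangle$--length sits in segments that are \emph{not} $r$-legal of length $\geq C$. Your step~3 attempts to repair this by saying the non-carried part is ``asymptotically dominated by $r$-legal generic leaf segments,'' but that is essentially a restatement of the conclusion you are trying to prove, and you have not given a uniform rate. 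Moreover, your justification (``strict domination of $\lambda$ over the growth rates inside $\langle\mathcal{Z}, \hat{\sigma}\rangle$'') is aimed at the wrong target: the denominator already \emph{ignores} paths carried by $\langle\mathcal{Z}, \hat{\sigma}\rangle$, so comparing $\lambda$ to growth rates of carried strata is irrelevant. What you would actually need to control is the proportion of non-carried length that fails to be $r$-legal, and your outline supplies no mechanism for that.

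The paper sidesteps this difficulty entirely. It fixes $M$ to be the uniform constant from the weak attraction theorem and argues by contradiction: if no $\epsilon$ works, take a sequence $\alpha_i$ with ${LEG}_{H_r^\sigma}(\phi^M_\#(\alpha_i))\to 0$ and ${LEG}_{H_s^{'\rho}}(\phi^{-M}_\#(\alpha_i))\to 0$. Using bounded cancellation one extracts subpaths $\beta_i\subset\alpha_i$ with $|\beta_i|_{\langle\mathcal{Z}, \hat{\sigma}\rangle}\to\infty$ such that $\phi^{\pm M}_\#(\beta_i)$ contain no central generic leaf segment of length $\geq C$. Passing to a subsequence produces a common long subpath $\tau$ not carried by $\langle\mathcal{Z}, \hat{\sigma}\rangle$, and a weak limit line $l$ containing $\tau$. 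Then $\phi^M_\#(l)$ and $\phi^{-M}_\#(l)$ contain no long generic leaf segments, so the weak attraction theorem forces $l$ to be carried by $\mathcal{A}_{na}(\Lambda^\pm_\phi)$, contradicting that $l\supset\tau$. The point is that the contradiction argument converts the failure of a uniform ratio bound directly into the existence of a single limit \emph{line} violating Lemma~\ref{WAT}; it never needs to compare numerator and denominator growth rates for individual $\alpha$.
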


\begin{proof}
 Let $M$ be the constant from the weak attraction theory discussed above (right before the statement of the lemma). Suppose there does not 
 exist any such $\epsilon >0$ which satisfies the conclusion of our statement. We argue to a contradiction.
 
 Let $\{\alpha_i\}$ be a sequence of conjugacy classes, none of which are carried by $\mathcal{A}_{na}(\Lambda^\pm_\phi)$, such that both 
 ${LEG}_{H_r^\sigma}(\phi^M_\#(\alpha_i))\to 0$ and ${LEG}_{H^{'\rho}_s}(\phi^{-M}_\#(\alpha_i))\to 0$. Now arguing as in \cite[Lemma 4.6]{self2}, one can 
 use the bounded cancellation lemma to find 
  subpaths $\beta_i$ contained in $\alpha_i$ such that ${|\beta_i|}_{\langle\mathcal{Z}, \hat{\sigma}\rangle}\to\infty$ and 
  $\phi^M_\#(\beta_i)$ does not contain any legal generic leaf segment of 
 length $\geq C$ in $H_r$ except perhaps at the endpoints, i.e. $\phi^M_\#(\beta_i)$ does not have any central generic leaf segment of length $\geq C$ in $H_r$. Passing to a 
 subsequence if necessary, we may similarly assume  that ${|\beta_i|}_{\langle\mathcal{Z}', \hat{\rho}\rangle}\to\infty$ and 
 $\phi^{-M}_\#(\beta_i)$ does not contain any central generic leaf segment 
 (of $\Lambda^-_\phi$) of length $\geq C$ in $H'_s$. But $\beta_i$'s (by construction) are not carried by $\mathcal{A}_{na}(\Lambda^\pm_\phi)$ and have arbitrarily long 
 lengths in terms of paths in $H_r$ and $H'_s$ (since the laminations are topmost), not counting copies of paths carried by $\langle\mathcal{Z}, \hat{\sigma}\rangle$ and 
 $\langle\mathcal{Z}', \hat{\rho}\rangle$. After passing to a further subsequence if necessary 
 we may assume that there is common (sufficiently long) subpath $\tau$ which is crossed by all the $\beta_i$'s and 
 $\tau$ is not carried by $\langle\mathcal{Z}, \hat{\sigma}\rangle$ that is used to construct $\mathcal{A}_{na}(\Lambda^\pm_\phi)$. 
 This implies that some weak limit of some subsequence 
 of $\beta_i$'s is a line $l$ which contains the path $\tau$.
 
 Now observe that  $\phi^M_\#(l)$ (respc. $\phi^{-M}_\#(l)$) does not
 contain any subsegment of a generic leaf of $\Lambda^+_\phi$  (respc. $\Lambda^-_\phi$) of length $\geq C$ in $H_r$ (respc. $H'_s$). Using the 
 weak attraction theorem again, this implies that $l$ is carried by $\mathcal{A}_{na}(\Lambda^\pm_\phi)$ and hence contained in the nonattracting
 subgraph. But this contradicts that $l$ contains the subpath $\tau$.

 Hence such an $\epsilon> 0$ does indeed exist.
 
 \end{proof}

 Given a conjugacy class $\alpha$ not carried by $\mathcal{A}_{na}(\Lambda^\pm_\phi)$, the following lemma compares the growth of legal segments of $\alpha$ relative to the size of $\alpha$, 
 when both are  being measured in terms of paths not carried by $\langle\mathcal{Z}, \hat{\sigma}\rangle$, i.e. ${|\cdot|}_{\langle\mathcal{Z}, \hat{\sigma}\rangle}$.

 \begin{lemma}\label{flare}
 Suppose $\phi\in\out$ is rotationless and exponentially growing with a lamination pair $\Lambda^\pm_\phi$ which are topmost for 
 $\phi, \phi^{-1}$.
 Then for every $\epsilon>0$ and $A>0$, there is $M_1$ depending only on $\epsilon, A$ such that 
 if ${LEG}_{H_r^\sigma}(\alpha)\geq\epsilon $ for some circuit $\alpha$ then 
  \[{|f^m_\#(\alpha)|}_{\langle\mathcal{Z}, \hat{\sigma}\rangle}\geq A {|\alpha|}_{\langle\mathcal{Z}, \hat{\sigma}\rangle}\]  for every $m>M_1$. 
\end{lemma}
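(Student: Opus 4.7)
The plan is to exploit the critical constant mechanism: once a circuit has a definite fraction of its $\langle\mathcal{Z},\hat\sigma\rangle$-length lying in long $r$-legal generic leaf segments, those segments are protected from cancellation under iteration of $f$ and their $H_r$-edge length grows like $\lambda^m$, where $\lambda>1$ is the Perron--Frobenius eigenvalue of $H_r$. The main issue is to verify that this growth is actually recorded by the measure ${|\cdot|}_{\langle\mathcal{Z},\hat\sigma\rangle}$ rather than being absorbed into copies of $\hat\sigma$ or into $Z$.

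First, I would unpack the legality hypothesis: by definition of $\mathrm{LEG}_{H_r^\sigma}(\alpha)\ge\epsilon$, the circuit $\alpha$ contains a finite collection $\beta_1,\dots,\beta_k$ of pairwise disjoint $r$-legal generic leaf segments of $\Lambda^+_\phi$, each of length at least $C$, whose total length sums to at least $\epsilon{|\alpha|}_{\langle\mathcal{Z},\hat\sigma\rangle}$. Since each $\beta_i$ is a subsegment of a generic leaf of $\Lambda^+_\phi$ associated to the EG stratum $H_r$, a standard bounded-quotient argument for generic leaves of an EG stratum in a CT gives a universal constant $\kappa>0$ (depending only on $f$) with $|\beta_i|_{H_r}\ge \kappa|\beta_i|$, so that $\sum_i |\beta_i|_{H_r}\ge \kappa\epsilon{|\alpha|}_{\langle\mathcal{Z},\hat\sigma\rangle}$.

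Next, because $C$ exceeds the critical constant of $f$, the discussion preceding the lemma produces a constant $\mu>0$ such that for every $m\ge 1$ and every $i$, the path $f^m_\#(\alpha)$ contains an $r$-legal subsegment $\beta_i^{(m)}$ (the ``descendant'' of $\beta_i$) of $H_r$-length at least $\mu\lambda^m|\beta_i|_{H_r}$, and the $\beta_i^{(m)}$ are pairwise disjoint subpaths of $f^m_\#(\alpha)$. I then need to show that these descendants actually contribute to ${|f^m_\#(\alpha)|}_{\langle\mathcal{Z},\hat\sigma\rangle}$. Edges of $H_r$ are not edges of $Z$ by construction of the nonattracting subgraph (they iterate to terms hitting $H_r$ itself), and $\hat\sigma$ carries exactly one illegal turn of height $r$, so any concatenation of $\hat\sigma^{\pm 1}$'s is not $r$-legal on segments longer than $|\hat\sigma|$. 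Therefore, as soon as $\mu\lambda^m C > |\hat\sigma|$, each $\beta_i^{(m)}$ is $r$-legal in $H_r$ and cannot sit inside a concatenation carried by $\langle\mathcal{Z},\hat\sigma\rangle$, so its full $H_r$-length is recorded by ${|\cdot|}_{\langle\mathcal{Z},\hat\sigma\rangle}$.

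Combining the estimates,
\[
{|f^m_\#(\alpha)|}_{\langle\mathcal{Z},\hat\sigma\rangle}\ \ge\ \sum_i |\beta_i^{(m)}|_{H_r}\ \ge\ \mu\lambda^m \sum_i |\beta_i|_{H_r}\ \ge\ \mu\kappa\epsilon\,\lambda^m\,{|\alpha|}_{\langle\mathcal{Z},\hat\sigma\rangle}.
\]
It then suffices to pick $M_1$ large enough that both $\mu\lambda^{M_1}C>|\hat\sigma|$ and $\mu\kappa\epsilon\lambda^{M_1}\ge A$, i.e.\ $M_1\ge \log_\lambda(A/(\mu\kappa\epsilon))$ up to an additive constant depending only on $f$; this $M_1$ depends only on $\epsilon$ and $A$ as required. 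The step I expect to require the most care is the verification that the descendants $\beta_i^{(m)}$ are not absorbed by $\langle\mathcal{Z},\hat\sigma\rangle$, since this is the one place where the specific structure of the CT (the $r$-illegality of $\hat\sigma$ and the disjointness of $H_r$ from $Z$) enters; the exponential growth itself is an immediate consequence of the critical constant framework of Bestvina--Feighn--Handel already invoked in the paper.
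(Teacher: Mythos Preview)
Your proposal is correct and follows essentially the same route as the paper: use the critical constant to get exponential $H_r$-growth of the long $r$-legal generic leaf segments, verify that this growth is counted by ${|\cdot|}_{\langle\mathcal{Z},\hat\sigma\rangle}$, and then choose $M_1$ so that $\mu\lambda^{M_1}\epsilon$ (up to your extra factor $\kappa$) exceeds $A$. Your treatment is in fact more explicit than the paper's on two points---the conversion factor $\kappa$ between total length and $H_r$-length of a generic leaf segment, and the argument that long $r$-legal descendants cannot be absorbed into $\langle\mathcal{Z},\hat\sigma\rangle$ because $\hat\sigma$ carries an illegal $H_r$-turn---where the paper instead appeals briefly to the topmost hypothesis and the critical constant discussion.
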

  
  \begin{proof}
   Since $\Lambda^+_\phi$ is a topmost lamination, any exponentially growing strata of height $> r$ is carried by $\langle\mathcal{Z}, \hat{\sigma}\rangle$. 
   So the only strata of height $s > r$ which can get attracted to $\Lambda^+_\phi$ are superlinear NEG edges of the form $E_s\mapsto E_su_s$, 
   where $u_s$ is a circuit contained in $G_{s-1}$ and gets attracted to $\Lambda^+_\phi$. Hence neither $E_s$ nor $u_s$ are carried by $\langle\mathcal{Z}, \hat{\sigma}\rangle$.

   Using the description of critical constant we can see that 
   \begin{align}
   {|f^m_\#(\alpha)|}_{\langle\mathcal{Z}, \hat{\sigma}\rangle} & \geq \mu\lambda^m\{\text{sum of r-legal generic leaf segments of $\alpha$ having length } \geq C\}\nonumber \\
      & \geq \mu\lambda^m\epsilon{|\alpha|}_{\langle\mathcal{Z}, \hat{\sigma}\rangle}\nonumber
   \end{align}
   
   Choose $m$ to be large enough so that $\mu\lambda^m\epsilon\geq A$.
   
  \end{proof}

  Recall the nonattracting subgroup system $\mathcal{A}_{na}(\Lambda^\pm_\phi) = \{[F^1], [F^2],... ,[F^k]\}$ is a mutually malnormal 
  subgroup system of finitely generated subgroups of $\F$ (hence quasiconvex). Hence one can form the coned-off space 
  $\widehat{\F}$ with respect to the collection $\{F^i\}$ and form a electrocuted metric space denoted by 
  ($\widehat{\F}, {|\cdot|}_{el}$). The group $\F$ is then strongly hyperbolic 
  relative to the collection $\{F^i\}$. Notice that this property of $\widehat{\F}$ is still true if we replace any of the 
  $F^i$'s with a conjugate of itself. Hence we may choose any representative (abusing the notation) $F^i$ for each $[F^i]$ and a lift $\Phi_i$ , such
  that $\Phi_i(F^i)=F^i$ and assume that $\F$ is (strongly) relatively hyperbolic with respect to this collection of subgroups.
  We stress the fact here that $\mathcal{A}_{na}(\Lambda^\pm_\phi)$ is not necessarily a free factor system.
  By ${||\alpha||}_{el}$ we denote the length of the shortest 
  representative of the conjugacy class $\alpha$ in this electrocuted metric and by ${|\alpha|}_{\langle\mathcal{Z}, \hat{\sigma}\rangle}$ we denote the
   length (relative to $\langle\mathcal{Z}, \hat{\sigma}\rangle$) of the circuit in $G$ that realizes this conjugacy class.

  The following lemma establishes that these two measurements are comparable.
  %Consider the following short exact sequence induced by $\phi$, $$1 \to \F \to \Gamma \to \langle\phi\rangle \to 1$$
   
   \begin{lemma}\label{comparison}
    Suppose $\phi\in\out$ is rotationless and exponentially growing with a lamination pair $\Lambda^\pm_\phi$ which are topmost for 
 $\phi, \phi^{-1}$ and $\alpha$ is any circuit in $G$ with ${LEG}_{H_r^\sigma}(\alpha)>0$. Then 
 there exists some $K>0$, independent of $\alpha$, such that $K \geq {|\alpha|}_{\langle\mathcal{Z}, \hat{\sigma}\rangle}/{||\alpha||}_{el}\geq \frac{1}{K}$.
   \end{lemma}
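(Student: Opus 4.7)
The plan is to recognize ${|\alpha|}_{\langle\mathcal{Z}, \hat{\sigma}\rangle}$ and ${||\alpha||}_{el}$ as two comparable measurements of the length of the conjugacy class $\alpha$ in quasi-isometric coned-off spaces. The key preliminary observation is that ${LEG}_{H_r^\sigma}(\alpha) > 0$ ensures $\alpha$ contains an $r$-legal generic leaf segment of $\Lambda^+_\phi$ of length at least $C$, which cannot be carried by $\langle\mathcal{Z}, \hat{\sigma}\rangle$ since no generic leaf of $\Lambda^+_\phi$ is carried by $\mathcal{A}_{na}(\Lambda^\pm_\phi)$ (Lemma \ref{NAS}(1) and (3)). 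Hence $\alpha$ is not carried by $\mathcal{A}_{na}(\Lambda^\pm_\phi)$, both ${|\alpha|}_{\langle\mathcal{Z}, \hat{\sigma}\rangle}$ and ${||\alpha||}_{el}$ are bounded below by $1$, and the additive errors of a quasi-isometric comparison can be absorbed into a multiplicative constant.

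To prove ${||\alpha||}_{el} \leq K \cdot {|\alpha|}_{\langle\mathcal{Z}, \hat{\sigma}\rangle}$, I would construct an electric loop in $\widehat{\F}$ representing $\alpha$ directly from the circuit of $\alpha$ in $G$. Decompose the circuit as an alternating concatenation of maximal subpaths carried by $\langle\mathcal{Z}, \hat{\sigma}\rangle$ and single edges not in the groupoid; by definition there are exactly ${|\alpha|}_{\langle\mathcal{Z}, \hat{\sigma}\rangle}$ non-groupoid edges. Via the marking $G \simeq R_n$, each non-groupoid edge corresponds to a word of uniformly bounded length in $\F$, contributing a bounded amount to the electric length. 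Each maximal groupoid subpath lies in a single lift of $h(K)$ in $\widetilde{G}$, which corresponds to a single coset of a conjugate of some $F^i$, so in the electrified metric it can be collapsed through the corresponding cone point at cost at most $1$. Summing these contributions yields the claimed bound.

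For the other inequality ${|\alpha|}_{\langle\mathcal{Z}, \hat{\sigma}\rangle} \leq K \cdot {||\alpha||}_{el}$, I would start with a representative $g \in \alpha$ realizing (up to an additive constant) the infimum ${||\alpha||}_{el}$ and take a shortest electric path from $1$ to $g$ in $\widehat{\F}$. De-electrifying each cone-off edge by replacing it with an actual path inside the corresponding lift of $h(K)$ produces a path in $\widetilde{G}$ whose number of edges outside lifts of $h(K)$ is at most a constant times ${||\alpha||}_{el}$. Projecting to $G$ and closing this up to a loop representing the conjugacy class $\alpha$, then tightening to the unique circuit, cannot increase the number of non-groupoid edges, so ${|\alpha|}_{\langle\mathcal{Z}, \hat{\sigma}\rangle}$ inherits the same bound.

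The main technical obstacle is the tightening step in the second inequality: one must verify that cancellations in $G$ do not cause non-groupoid and groupoid portions to interact in a way that inflates the relative length. This is controlled by the malnormality of $\mathcal{A}_{na}(\Lambda^\pm_\phi)$ (Lemma \ref{NAS}(7)), which forces any two maximal groupoid subpaths brought together by cancellation to lie in a common conjugate of some $F^i$ and hence fuse into a single longer groupoid subpath --- an operation that only decreases ${|\alpha|}_{\langle\mathcal{Z}, \hat{\sigma}\rangle}$ --- while the standard bounded cancellation lemma applied to the immersion $h : K \to G$ absorbs the remaining boundary effects into a uniform constant.
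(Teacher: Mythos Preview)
Your approach is essentially the same as the paper's: both recognize the two quantities as lengths in quasi-isometric coned-off spaces and build explicit comparison maps in each direction, with the paper factoring the comparison through an intermediate electrocuted metric on $\widetilde G$ while you compare directly.

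One imprecision worth flagging in your second inequality: the assertion that tightening ``cannot increase the number of non-groupoid edges'' is false when $\hat\sigma$ is nontrivial, since cancellation at a boundary can leave a proper subpath of $\hat\sigma$ whose $H_r$-edges are no longer carried by $\langle\mathcal Z,\hat\sigma\rangle$; moreover the bounded cancellation constant of the immersion $h:K\to G$ is zero, so that is not the controlling tool. The actual control---which the paper encodes in its additive $+2sL_1$ term without much comment---is simply that $\hat\sigma$ has uniformly bounded length and the number of groupoid/non-groupoid interfaces is at most $O({||\alpha||}_{el})$, so the total number of exposed $H_r$-edges is still linear in ${||\alpha||}_{el}$. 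With that correction your argument goes through, and your explicit mention of malnormality (to ensure merged groupoid segments stay groupoid) is a point the paper leaves implicit.
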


\begin{proof}
 Let $\tilde{G}$ denote the universal cover of the marked graph $G$. Consider $\tilde{G}$ equipped with the electrocuted metric obtained by electrocuting 
 images of all cosets of $F^i$'s, by using the marking on $\tilde{G}$, which is obtained by lifting the marking on $G$. 
 This electrocuted metric is denoted by ${|\cdot|}^{\tilde{G}}_{el}$. 
 Then any path in $\tilde{G}$ that is entirely contained in the 
 copy of a coset of some $F^i$ projects down to a path in $G$ which is carried by $\langle\mathcal{Z}, \hat{\sigma}\rangle$. Note that the 
 assumption on legality of $\alpha$ ensures that we are working with a conjugacy class which is not carried by $\mathcal{A}_{na}(\Lambda^\pm_\phi)$, 
 equivalently the circuit $\alpha$ representing such a conjugacy class is not carried by $\langle\mathcal{Z}, \hat{\sigma}\rangle$, hence 
 ${|\alpha|}_{\langle\mathcal{Z}, \hat{\sigma}\rangle}\geq 1$.
 
 Let $E_i$ be some edge not carried by $\langle\mathcal{Z}, \hat{\sigma}\rangle$. Then the lift $\tilde{E}_i$ is a path in $\tilde{G}$ which 
 is not entirely contained in the copy of some coset of $F^i$. Let $L_2 = \text{max}\{{|\tilde{E}_i|}^{\tilde{G}}_{el}\}$ where $E_i$ varies 
 over all edges of $G$ which are not carried by $\langle\mathcal{Z}, \hat{\sigma}\rangle$. If $\alpha$ is any circuit in $G$ which is not carried 
 by the nonattracting subgroup system, then  
 \[ {|\tilde{\alpha}|}^{\tilde{G}}_{el}\leq L_2 {|\alpha|}_{\langle\mathcal{Z}, \hat{\sigma}\rangle} 
 \implies {|\tilde{\alpha}|}^{\tilde{G}}_{el}/ {|\alpha|}_{\langle\mathcal{Z}, \hat{\sigma}\rangle}\leq L_2\]
 
 Next suppose that $\tilde{\beta}_i$ is some geodesic path in $\tilde{G}$ (in the electrocuted metric) which connects copies of two electrocuted cosets and 
 does not intersect any copy of any electrocuted coset except at the endpoints. Note that there are only finitely many such paths upto translation
 in $\tilde{G}$. Let $L'_1=\text{max}\{{|\beta_i|}_{\langle\mathcal{Z}, \hat{\sigma}\rangle}\}$, where $\beta_i$ is the projection of 
 $\tilde{\beta}_i$ to $G$ followed by tightening. Also consider all $\tilde{\alpha}^j_i$'s where $\tilde{\alpha}^j_i$ varies over geodesic paths   
  inside the copy of the identity coset of $F^j$ in $\tilde{G}$ representing a generator $g^j_i$ of $F^j$, under standard metric on $\tilde{G}$ 
  (i.e. we are recording the length of geodesic paths representing a generator of $F^j$ with the standard metric on $\tilde{G}$).
 Let $L''_1=\max\{{|\alpha_i^j|}_G\}$,  where $\alpha^j_i$ is the projection of $\tilde{\alpha}^j_i$ to $G$ followed by tightening (note that 
 the measurement done for $L_1''$ is in terms of standard path metric on $G$).
  
  Suppose $w\in\F$ is some cyclically reduced word not in the union of $F^i$'s. Let $\tilde{\alpha}$ be a path in $\tilde{G}$ that 
  represents the geodesic connecting the identity element to $w$, under the lift of the marking on $G$. Suppose $\tilde{\alpha}= u_1 X_1 u_2 X_2...u_s X_s$, where $X_i$'s are 
  geodesic paths in $\tilde{G}$ connecting two points in a copy of some coset via the attached cone-point and $u_i$'s are geodesic paths in 
  $\tilde{G}$ which connect copies of two electrocuted cosets and does not pass through any cone-point. Note that $u_i$'s are concatenation 
  of paths of type $\tilde{\beta}_i$'s described above.
  Under this setup we have 
  \[{|\tilde{\alpha}|}^{\tilde{G}}_{el}={|u_1|}^{\tilde{G}}_{el} + {|u_2|}^{\tilde{G}}_{el} + ...+ {|u_s|}^{\tilde{G}}_{el} + s \]
 Modify $\tilde{\alpha}$ by replacing each $X_i$ with a (minimal) concatenation of translations of paths of type $\tilde{\alpha}^j_i$ in $\tilde{G}$
 described above, and look at the 
  projection of this modified path obtained from $\tilde{\alpha}$ to $G$ followed by tightening. Denote the tightened, projected path by $\alpha$. Then 
  $\alpha$ is a path in $G$ such that after accounting for cancellations that appear in the modification of $\tilde{\alpha}$, we have
 \[{|\alpha|}_{\langle\mathcal{Z}, \hat{\sigma}\rangle}\leq {|\tilde{\alpha}|}^{\tilde{G}}_{el}L_1 + 2sL_1
 \implies \frac{{|\alpha|}_{\langle\mathcal{Z}, \hat{\sigma}\rangle}}{{|\tilde{\alpha}|}^{\tilde{G}}_{el}}
 \leq L_1+\frac{2sL_1}{{|\tilde{\alpha}|}^{\tilde{G}}_{el}}\leq 3L_1\]
 
 The other possibilities in the presentation of $\tilde{\alpha}$ (in terms of $u_i$'s and $X_i$'s) are handled similarly.
 Hence we have
 \[\frac{1}{3L_1}\leq  {|\tilde{\alpha}|}^{\tilde{G}}_{el}/{|\alpha|}_{\langle\mathcal{Z}, \hat{\sigma}\rangle} \leq L_2 \addtag \]

 Now using the lift of marking map on $G$ to $\tilde{G}$ one can show by similar arguments as above that there exists some $K'>0$ such that 
 for every cyclically reduced word $w\in\F\setminus\cup F^i$ we have
 \[\frac{1}{K'}\leq {|w|}_{el}/{|\tilde{\alpha}_w|}^{\tilde{G}}_{el} \leq K' \addtag \] where $\tilde{\alpha}_w$ is the electrocuted geodesic in 
 $\tilde{G}$ connecting the image of identity element to image of $w$ under the marking map on $\tilde{G}$ and ${|w|}_{el}$ is the length of the 
 electrocuted geodesic in $\hat{\F}$ connecting identity element and $w$. Hence combining the inequalities (1) and (2) 
 above we can conclude that there 
 exists some $K>0$ such that \[ K \geq {|\alpha|}_{\langle\mathcal{Z}, \hat{\sigma}\rangle}/{||\alpha||}_{el}\geq \frac{1}{K} \]
 \end{proof}

\begin{proposition}[Conjugacy flaring]
 \label{conjflare}
 Suppose $\phi\in\out$ is rotationless and exponentially growing with a lamination pair $\Lambda^\pm_\phi$ which are topmost.
  Then there exists some $M_0>0$ such that for every conjugacy class $\alpha$ not carried by $\mathcal{A}_{na}(\Lambda^\pm_\phi)$, we have 
  $$3{||\alpha||}_{el} \leq \mathsf{max} \{{||\phi^M_\#(\alpha)||}_{el}, {||\phi^{-M}_\#(\alpha)||}_{el}\} $$
 for every $M\geq M_0$.
 \end{proposition}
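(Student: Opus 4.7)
The plan is to chain Lemmas \ref{legality}, \ref{flare}, and \ref{comparison}, and then absorb a Lipschitz factor coming from $\phi$ acting on the coned-off Cayley graph $\widehat{\F}$. Given a conjugacy class $\alpha$ not carried by $\mathcal{A}_{na}(\Lambda^\pm_\phi)$, the first step is to apply Lemma \ref{legality} to extract uniform constants $\epsilon>0$ and $N\geq 1$ such that, after possibly swapping $\phi$ and $\phi^{-1}$, ${LEG}_{H_r^\sigma}(\phi^N_\#(\alpha))\geq\epsilon$. This is exactly the hypothesis that puts $\phi^N_\#(\alpha)$ in the regime where the flaring lemma bites, and is the only place the topmost assumption and the weak attraction theorem really enter.

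Next I would feed $\phi^N_\#(\alpha)$ into Lemma \ref{flare} with a stretching factor $A>0$ to be pinned down later: this yields $M_1=M_1(\epsilon,A)$ with
$$\bigl|\phi^{N+m}_\#(\alpha)\bigr|_{\langle\mathcal{Z},\hat{\sigma}\rangle}\;\geq\; A\,\bigl|\phi^N_\#(\alpha)\bigr|_{\langle\mathcal{Z},\hat{\sigma}\rangle}\qquad\text{for every } m>M_1.$$
Because a legal generic leaf segment of length $\geq C$ is protected by bounded cancellation and stretches exponentially under forward iteration, $\phi^{N+m}_\#(\alpha)$ retains positive ${LEG}_{H_r^\sigma}$, so Lemma \ref{comparison} is applicable at both ends of the inequality and gives
$$\|\phi^{N+m}_\#(\alpha)\|_{el}\;\geq\;\frac{1}{K}\bigl|\phi^{N+m}_\#(\alpha)\bigr|_{\langle\cdot\rangle}\;\geq\;\frac{A}{K}\bigl|\phi^N_\#(\alpha)\bigr|_{\langle\cdot\rangle}\;\geq\;\frac{A}{K^2}\,\|\phi^N_\#(\alpha)\|_{el}.$$

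To close the loop I would use that, since $\mathcal{A}_{na}(\Lambda^\pm_\phi)$ is $\phi$-invariant as a subgroup system, any lift $\Phi$ of $\phi$ permutes the conjugacy classes $[F^i]$ and hence descends to a bi-Lipschitz self-map of $\widehat{\F}$ with some uniform constant $\Lambda>0$; in particular $\|\alpha\|_{el}\leq \Lambda^N\,\|\phi^N_\#(\alpha)\|_{el}$. Stringing everything together,
$$\|\phi^{N+m}_\#(\alpha)\|_{el}\;\geq\;\frac{A}{K^{2}\Lambda^{N}}\,\|\alpha\|_{el}.$$
Setting $A:=3K^2\Lambda^N$ (a quantity uniform in $\alpha$) fixes $M_1$, and I would declare $M_0:=N+M_1$; any $M\geq M_0$ then decomposes as $N+m$ with $m>M_1$. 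The opposite branch of Lemma \ref{legality} is handled symmetrically and controls $\|\phi^{-M}_\#(\alpha)\|_{el}$ instead, so $\max\{\|\phi^M_\#(\alpha)\|_{el},\|\phi^{-M}_\#(\alpha)\|_{el}\}\geq 3\|\alpha\|_{el}$ for all $M\geq M_0$. The two slightly delicate points I expect to spell out are (i) verifying that forward iteration keeps ${LEG}_{H_r^\sigma}$ positive so that Lemma \ref{comparison} applies at $\phi^{N+m}_\#(\alpha)$, which is essentially built into the critical constant formalism, and (ii) producing the Lipschitz constant $\Lambda$ honestly, since $\Phi(F^i)=g_iF^{\sigma(i)}g_i^{-1}$ is a conjugate rather than a coset of $F^{\sigma(i)}$, so the bi-Lipschitz statement on $\widehat{\F}$ needs a short explicit argument rather than being immediate.
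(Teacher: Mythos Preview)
Your proposal is correct and follows essentially the same route as the paper: chain Lemma~\ref{legality}, Lemma~\ref{flare}, and Lemma~\ref{comparison}, absorbing a fixed Lipschitz loss from the initial $N$ iterates. The only substantive bookkeeping difference is where you place that Lipschitz constant. You work in the electrocuted metric and invoke a bi-Lipschitz constant $\Lambda$ for the action of a lift $\Phi$ on $\widehat{\F}$; the paper instead stays in the relative graph length $|\cdot|_{\langle\mathcal{Z},\hat{\sigma}\rangle}$ and uses a constant $D$ with $|\phi^{M}_\#(\alpha)|_{\langle\mathcal{Z},\hat{\sigma}\rangle}\geq |\alpha|_{\langle\mathcal{Z},\hat{\sigma}\rangle}/D$, converting to $\|\cdot\|_{el}$ only at the very end via Lemma~\ref{comparison}. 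The paper's choice is marginally cleaner precisely because it sidesteps your point (ii): since $f_\#$ carries the groupoid $\langle\mathcal{Z},\hat{\sigma}\rangle$ into itself, the constant $D$ is immediate from bounded cancellation, whereas your $\Lambda$ genuinely needs the short argument you flag about $\Phi(F^i)$ being a conjugate rather than a coset. Regarding your point (i), note that the hypothesis ${LEG}_{H_r^\sigma}(\alpha)>0$ in Lemma~\ref{comparison} is really just a stand-in for ``$\alpha$ is not carried by $\mathcal{A}_{na}$'' (see the proof of that lemma), and this property is $\phi$-invariant, so the comparison applies at every iterate without further work.
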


 \begin{proof}
  Let $M$ be as in Lemma \ref{legality} and choose some number $D>0$ such that we have 
  ${|\phi^{M}_\#(\alpha)|}_{\langle\mathcal{Z}, \hat{\sigma}\rangle} \geq {|\alpha|}_{\langle\mathcal{Z}, \hat{\sigma}\rangle}/D$ and 
  ${|\phi^{-M}_\#(\alpha)|}_{\langle\mathcal{Z}', \hat{\rho}\rangle} \geq {|\alpha|}_{\langle\mathcal{Z}', \hat{\rho}\rangle}/D$.  
  By applying Lemma \ref{legality} again there exists some $\epsilon >0$ such that 
  either ${LEG}_{H_r^\sigma}(\phi^M_\#(\alpha)) \geq \epsilon$ or ${LEG}_{H^{'\rho}_s}(\phi^{-M}_\#(\alpha)) \geq \epsilon$.
  
  By applying Lemma \ref{comparison} we may choose some constant $K$ such that for every conjugacy class $\alpha$ 
  as above, we have   either 
  $K \geq {|\phi^M_\#(\alpha)|}_{\langle\mathcal{Z}, \hat{\sigma}\rangle}/{||\phi^M_\#(\alpha)||}_{el}\geq \frac{1}{K}$ or 
  $K\geq{|\phi^{-M}_\#(\alpha)|}_{\langle\mathcal{Z}', \hat{\rho}\rangle}/{||\phi^{-M}_\#(\alpha)||}_{el} \geq \frac{1}{K}$.

  For concreteness assume that ${LEG}_{H_r^\sigma}(\phi^m_\#(\alpha)) \geq \epsilon$. Then by applying 
  Lemma \ref{flare} with $\epsilon$ and $A= 3DK^2$ we get that there exists some $M_1$ such that for all 
  $m\geq M_0:=\mathsf{max}\{M, M_1\}$ we have 
  \begin{equation}
  \begin{split}
  {||\phi^m_\#(\alpha)||}_{el} & \geq  \frac{1}{K}{|\phi^m_\#(\alpha)|}_{\langle\mathcal{Z}, \hat{\sigma}\rangle} \\
  & \geq \frac{1}{K}3DK^2{|\phi^{M_1}_\#|}_{\langle\mathcal{Z}, \hat{\sigma}\rangle} \\
   & \geq 3DK \frac{1}{D}{|\alpha|}_{\langle\mathcal{Z}, \hat{\sigma}\rangle}= 3K{|\alpha|}_{\langle\mathcal{Z}, \hat{\sigma}\rangle}\\
   & \geq 3K\frac{1}{K}{||\alpha||}_{el} = 3{||\alpha||}_{el}
  \end{split}
 \end{equation}
  
 \end{proof}

 The following lemma proves that conjugacy flaring implies the Mj-Reeves \textit{cone-bounded hallways strictly flaring} condition. The technique used in the proof 
 is a generalization of the one used by Bestvina-Feighn-Handel in \cite[Theorem 5.1]{BFH-97}. Recall that we are working with 
 an exponentially growing outer automorphism with a dual lamination pair $\Lambda^\pm_\phi$ such that 
 $\mathcal{A}_{na}(\Lambda^\pm_\phi) = \{[F^1], [F^2],... ,[F^k]\}$.
 
 \begin{proposition}[Strictly flaring]
 \label{strictflare}
 Suppose $\phi\in\out$ is rotationless and exponentially growing with a lamination pair $\Lambda^\pm_\phi$ which are topmost.
  There exists some $N>0$ such that for every word $w\in \F\setminus \cup F^i$ and any lift $\Phi$ of $\phi$, we have 
  $$2{|w|}_{el} \leq \mathsf{max}\{{|\Phi^n_\#(w)|}_{el}, {|\Phi^{-n}_\#(w)|}_{el}\}$$
  for every $n \geq N$.
 \end{proposition}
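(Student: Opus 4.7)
The plan is to deduce word flaring from the conjugacy flaring of Proposition \ref{conjflare} by a contradiction argument following \cite[Theorem 5.1]{BFH-97}. Suppose the conclusion fails, producing sequences $w_i \in \F \setminus \cup F^j$, lifts $\Phi_i$ of $\phi$, and $n_i \to \infty$ with
$$\max\{|\Phi_i^{n_i}(w_i)|_{el},\, |\Phi_i^{-n_i}(w_i)|_{el}\} < 2|w_i|_{el}.$$
I first iterate Proposition \ref{conjflare}: by composing the statement with itself, for any target $K > 0$ there is $M_K$ such that $K \cdot {||\alpha||}_{el} \leq \max\{{||\phi^n_\#(\alpha)||}_{el},\, {||\phi^{-n}_\#(\alpha)||}_{el}\}$ holds for all $n \geq M_K$ and every conjugacy class $\alpha$ not carried by $\mathcal{A}_{na}(\Lambda^\pm_\phi)$; I take $K$ to be large.

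Next, I split on whether $[w_i]$ is carried by $\mathcal{A}_{na}$. When $[w_i]$ is \emph{not carried}, the inequality $|\Phi^n(w)|_{el} \geq {||\phi^n_\#([w])||}_{el}$ combined with iterated conjugacy flaring and the failure assumption forces $|w_i|_{el} > (K/2) \cdot {||[w_i]||}_{el}$, so the word length vastly exceeds the conjugacy length. Realizing $w_i$ as a loxodromic isometry of the electrocuted universal cover $(\widetilde G, \widetilde d_{el})$, the basepoint $\widetilde p_0$ lies at distance at least $c \cdot |w_i|_{el}$ from the axis $A_{w_i}$ for some $c > 0$ depending on $K$. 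When $[w_i]$ is \emph{carried}, $w_i = u_i v_i u_i^{-1}$ with $v_i \in F^{j_i}$ peripheral and $u_i \notin F^{j_i}$ (forced by $w_i \notin \cup F^\ell$); since $v_i$ contributes at most $1$ to $|w_i|_{el}$, the same basepoint-far-from-axis statement for $u_i$ emerges.

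The main obstacle is contradicting this basepoint-far-from-axis configuration. Writing $w_i = u_i \beta_i u_i^{-1}$ with $\beta_i$ a shortest conjugate of $[w_i]$, the long conjugating element $u_i$ defines a geodesic ray from $\widetilde p_0$ toward $A_{w_i}$; I pass to a weak limit of such rays, after translating by appropriate elements of $\F$. The resulting bi-infinite limit line cannot be attracted under $\phi$ to $\Lambda^+_\phi$ or $\Lambda^-_\phi$: attraction in either direction would, via Lemma \ref{flare} combined with the comparison in Lemma \ref{comparison}, yield a lower bound $|\Phi_i^{\pm n_i}(w_i)|_{el} \geq 2|w_i|_{el}$ on the original sequence, contradicting the hypothesis. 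Hence the Weak Attraction Theorem (Lemma \ref{WAT}) forces this limit line to be carried by $\mathcal{A}_{na}(\Lambda^\pm_\phi)$; since carried lines form a weakly closed set (Lemma \ref{NAS}(2)) and $\mathcal{A}_{na}$ is malnormal (Lemma \ref{NAS}(7)), the $w_i$'s themselves must be eventually carried in a specific $F^\ell$, contradicting $w_i \notin \cup F^\ell$. This produces the desired contradiction and establishes the existence of $N$.
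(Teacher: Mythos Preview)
Your argument has genuine gaps. The central one is the step claiming that attraction of the limit line to $\Lambda^\pm_\phi$ would, via Lemmas~\ref{flare} and~\ref{comparison}, force $|\Phi_i^{\pm n_i}(w_i)|_{el} \geq 2|w_i|_{el}$. Both lemmas are stated and proved only for \emph{circuits}, whereas you are applying them to the conjugating paths $u_i$. Even granting some path version, a long $r$-legal leaf segment inside $u_i$ tells you that $f^{n_i}_\#(u_i)$ is long, not that $\Phi_i^{n_i}(w_i)$ is long: in the product $\Phi_i^{n_i}(u_i)\,\Phi_i^{n_i}(\beta_i)\,\Phi_i^{n_i}(u_i)^{-1}$ the stretched segment can be completely cancelled against $\Phi_i^{n_i}(\beta_i)$ or against the other copy of $\Phi_i^{n_i}(u_i)^{\pm 1}$, and nothing in your setup controls that cancellation. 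Your final step is also incorrect as written: the limit line being carried by $\mathcal{A}_{na}$ says only that long central subpaths of (translates of) the $u_i$ lie in the groupoid $\langle\mathcal{Z},\hat{\sigma}\rangle$; it does not place $w_i$ itself in any $F^\ell$. The honest contradiction to aim for would be that $|u_i|_{el}$ then stays bounded, conflicting with $|u_i|_{el}\to\infty$---but making that precise again requires a path-level version of Lemma~\ref{comparison} that you have not established. Finally, the whole limit construction presupposes $|w_i|_{el}\to\infty$; nothing rules out $|w_i|_{el}$ staying bounded, which is exactly the regime the paper isolates and treats separately.

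The paper's route is far more direct and avoids all of this. Given a word $w$ that is not cyclically reduced, it left-multiplies by a single suitably chosen basis element $k$ so that $kw$ is cyclically reduced with $[kw]$ not carried by $\mathcal{A}_{na}$, and then applies Proposition~\ref{conjflare} to $[kw]$. Because Proposition~\ref{conjflare} delivers a factor of~$3$ while only a factor of~$2$ is required here, the additive error $|\Phi^n(k)|_{el}\leq L$ (a fixed constant over $|n|\leq M_0$) is absorbed whenever $|w|_{el}\geq L-3$; an induction on $\lfloor n/M_0\rfloor$ then extends this to all large $n$. Words with $|w|_{el}<L-3$ are dispatched by a short separate argument. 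No weak limits or compactness are used.
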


 \begin{proof}
 Recall that for any subgraph $H\subset G$ we can use the fundamental groups of the noncontractible
 components of $H$ to form a free factor system. 
  First observe that from the construction of $\mathcal{A}_{na}(\Lambda^\pm_\phi)$ we know that the strata $H_r$ is not a part 
  of the nonattracting subgraph $Z$ that is used to define $\mathcal{A}_{na}(\Lambda^\pm_\phi)$. Hence we can always find a free factor 
  system $\mathcal{F}$, defined by the noncontractible components of $H_r$, with one component $[K^i]$ for each noncontractible component
  of $H_r$. Without loss, assume that $K^1 * K^2 * ... * K^p * B = \F$, where $B$ is perhaps trivial. 
  
  Let $L = \mathsf{max}\{{|\Phi^i_\#(k_j)|}_{el}\arrowvert i=0,\pm1,\pm2,...,\pm M_0\}$ where 
  $k_j\in K^j$ varies over all the basis elements of $K^j$ for some chosen basis of $\F$,  and $M_0$ is the 
  constant from Lemma \ref{conjflare}.  By description of $L$ we may assume that ${|\Phi^{i}_\#(k_j)|}_{el}\geq 1/L$ 
  for all $i, j$ as described above. 
  
  \textbf{Case 1: } Assume $w\in\F\setminus \cup F^i$ and ${|w|}_{el} \geq L-3$.

  The proof is by induction. For the base case let $n = M_0$.

   If $w$ is a cyclically reduced word then conjugacy class of $w$ is not carried by $\mathcal{A}_{na}(\Lambda^\pm_\phi)$ and 
  so by using Lemma \ref{conjflare} we have  
  $$ \mathsf{max}\{{|\Phi^n_\#(w)|}_{el}, {|\Phi^{-n}(w)|}_{el}\} \geq 3{|w|}_{el} \geq 2{|w|}_{el} $$
  If $w$ is not cyclically reduced then we can choose a basis element $k\in K^j$, for some j, such that 
  $kw\in \F\setminus \cup F^i$  is a cyclically reduced word.
  Hence we get the same inequality as above, but with $w$ being substituted by $kw$.
  
  For sake of concreteness suppose that ${|\Phi^n_\#(kw)|}_{el} \geq 3{|kw|}_{el}$. Then we have 
  $3{|kw|}_{el}\leq {|\Phi^n_\#(kw)|}_{el}\leq  {|\Phi^n_\#(w)|}_{el}+{|\Phi^n_\#(k)|}_{el}\leq {|\Phi^n_\#(w)|}_{el} +L $.
  This implies that $3 + 3{|w|}_{el} - L \leq 3{|kw|}_{el} - L \leq {|\Phi^n_\#(w)|}_{el}$ 
  since ${|k|}_{el} = 1$ (because $k$ is a basis element) and there is no cancellation between $k$ and $w$. 
  Since we have  ${|w|}_{el} \geq L-3$, the above inequality then implies $2{|w|}_{el}\leq {|\Phi^n_\#(w)|}_{el}$
  and we are done with the base case for our inductive argument.
  
  Now assume that $M_0< n $ for the inductive step. First observe that from what we have proven so far,
  given any integer $s>0$ we have either ${|\Phi^{sM_0}_\#(w)|}_{el}\geq 2^{s}{|w|}_{el}$ or
  ${|\Phi^{-sM_0}_\#(w)|}_{el} \geq 2^{s}{|w|}_{el}$. Fix some positive integer $s_0$ such that 
  $2^{s_0}> 2L$. Any integer $n>s_0M_0$ can be written as $n= sM_0 + t$ where $0\leq t <M_0$ and 
  $s_0\leq s$. If ${|\Phi^{sM_0}_\#(w)|}_{el}\geq 2^{s}{|w|}_{el}$ then we can deduce 
  $$ {|\Phi^n_\#(w)|}_{el} = {|\Phi^{sM_0+t}_\#(w)|}_{el}\geq 2^s{|w|}_{el}/L \geq 2{|w|}_{el}$$ 
  %since there are at least $2^s{|w|}_{el}$ generators of $F^2$ which appear in $\Phi^{M_0}_\#(w)$. 
  Similarly 
  when ${|\Phi^{-sM_0}_\#(w)|}_{el}\geq 2^{s}{|w|}_{el}$ one proves by a symmetric argument that 
  ${|\Phi^{-n}_\#(w)|}_{el} \geq 2{|w|}_{el}$.
  
  \textbf{Case 2: } Assume $w\in\F\setminus \cup F^i$ and ${|w|}_{el} < L-3$.
  
  Firstly we note that $w\notin \cup F^i$ implies that $0 < {|w|}_{el}$. If $w$ is not conjugate to some element of 
  $F^i$, then we can argue as in the beginning of Case 1 to show that that either 
  ${|\Phi^n_\#(w)|}_{el}\geq 2{|w|}_{el}$ or ${|\Phi^{-n}_\#(w)|}_{el}\geq 2{|w|}_{el}$ for all $n\geq M_0$.
  
  If $w$ is conjugate to some element of $F^i$ then we can write $w=ugu^{-1}$ for some generator
  $u\in K^j$ such that $u$ is not conjugate to any word in any of the $F^j$'s and $g\in F^i$. ${|w|}_{el} < L-3$ implies that  
  $${|w|}_{el} \leq {|u|}_{el} + {|u^{-1}|}_{el} = 2 {|u|}_{el} < L-3$$
  Now observe that under iteration of $\Phi$, the reduced word $g$ has polynomial growth in the electrocuted metric 
  since it's conjugacy class is carried by the nonattracting subgroup system $\mathcal{F}$, whereas the word 
  $u$ grows exponentially under iteration of $\phi$. Hence we can conclude that 
  ${|\Phi^s_\#(w)|}_{el} \geq {|\Phi^s_\#(u)|}_{el}$ for all $s>0$ and thus $w$ has exponential growth in the electrocuted 
  metric. Now choose some $N_w$ such that ${|\Phi^{N_w}_\#(u)|}_{el}\geq 4 {|u|_{el}}\geq 2{|w|}_{el}$.  
  Observe that the bounded cancellation lemma tells us that $N_w$'s obtained from this subcase depend only on the conjugating word $u$ and not on 
  $g$. Hence they are only finitely many $N_w$'s.
  
  Finally we let $N$ be max of $M_0$ from Case 1 and all the $N_w$'s from case 2 
  and we have the desired conclusion.

 \end{proof}

 For convenience of the reader we reproduce some of the terminologies used in the Mj-Reeves strong 
 combination theorem for relative hyperbolicity and put it in the context that we have here.
 We perform a partial electrocution of the extension group $\Gamma$ $$1\to\F\to\Gamma\to\langle\phi\rangle\to 1 $$
  with respect the collection of 
  subgroups $\{F^i\}$ and denoted it by $\widehat{\Gamma}$. We also perform an electrocution of 
  $\F$ with respect to the collection $\{F^i\}$, denoted by $(\widehat{\F}, {|\cdot|}_{el})$, and since $\mathcal{A}_{na}(\Lambda^\pm_\phi)$
  is a malnormal subgroup system, $\F$ is (strongly) relatively hyperbolic with respect to the collection $\{F^i\}$. 
  The Cayley graph of the quotient group $\langle\phi\rangle$ (denoted by $T$) 
  being a tree (in fact a line here), gives us a tree of (strongly) relatively hyperbolic spaces with vertex spaces being identified 
  with cosets of $\F$. Thus we may regard the Cayley graph of $\Gamma$ as the tree $T$ of (strongly) relatively hyperbolic 
  spaces and then $\widehat{\Gamma}$ is the induced tree of coned-off spaces

 Next step is to choose lifts $\Phi_i$ of $\phi$ such that $\Phi_i(F^i)=F^i$. This gives us an induced short exact sequence 
 $$1\to F^i\to\Gamma_i\to\langle\phi_i\rangle \to 1$$, where $\phi_i\in\mathrm{Out}(F^i)$.
 
 \begin{description}
 \item \textbf{Induced tree of coned-off spaces}: $\hat{\Gamma}$ as described above is the induced 
 tree of coned-off spaces. This is the first stage of electrocution (called \textit{partial electrocution} in the 
 Mj-Reeves paper). (see \cite[Definition 3.4]{MjR-08} and the discussion that follows after it.) 
 
 In our case the edge spaces and vertex spaces are identified with cosets of $\F$ and the maps from the edge space to the vertex (the endpoints of the concerned edge) spaces 
 are in fact quasi-isometries. Since $\phi$ reserves each $[F^i]$ for each $i$, 
  is fairly straightforward to check that in our case $\Gamma$, viewed as a tree of (strongly) relatively hyperbolic spaces, satisfies the first three conditions 
 of the \cite[Theorem 4.6]{MjR-08}, namely the \textit{q.i.-embedded condition, strictly type preserving condition, the q.i.-preserving 
 electrocution condition}.
 
 \item \textbf{Cone-locus} (from \cite{MjR-08}): The cone locus of $\hat{\Gamma}$, induced tree of coned-off spaces, is the forest whose vertex set consists of the cone-points of the 
vertex spaces of $\hat{\Gamma}$ and whose edge set consists of the cone-points in the edge spaces of $\hat{\Gamma}$. The incidence relations 
of the cone locus is dictated by the incidence relations in $T$.

Connected components of cone-locus can be identified with subtrees of $T$. 
Each connected component of the cone-locus is called a \textit{maximal cone-subtree}. 
The collection of maximal cone-subtrees is denoted by $\mathcal{T}$ and each element of 
$\mathcal{T}$ is denoted by $T_j$. Each $T_j$ gives rise to a tree $T_j$ of horosphere-like subsets depending on 
which cone-points arise as vertices and edges of $T_j$. The metric space that $T_j$ gives rise 
to is denoted by $C_j$ and is called \textit{maximal cone-subtree of horosphere-like
spaces}. The collection of $C_j$ is denoted by $\mathcal{C}$.

In our context, the collection $\mathcal{C}$ corresponds to the collection of 
cosets of $\Gamma_i$ as a subgroup of $\Gamma$, for each $i$. Note that the partially electrocuted metric space, $\hat{\Gamma}$,
can be viewed as electrocuting cosets of $F^i$ in $\Gamma_i$ across all 
cosets of $\Gamma_i$ in $\Gamma$, for each $i$. The maximal cone-subtrees, $T_j$'s, are obtained from
this electrocution of $\Gamma_i$; each coset of $\Gamma_i$, after electrocution of 
cosets of $F^i$ inside $\Gamma_i$, gives us a maximal cone-subtree, i.e. the $C_j$'s are electrocuted to $T_j$'s in the first 
step of electrocution.

  \item \textbf{Hallway}(from \cite{BF-92}): A disk $f: [-m, m]\times I\to \hat{\Gamma}$ is a hallway 
  of length $2m$ if it satisfies the following conditions:
  \begin{enumerate}
   \item $f^{-1}(\cup \hat{\Gamma}_v: v\in T) = \{-m, ...., m\}\times I$.
   \item $f$ maps $i\times I$ to a geodesic in $\hat{\Gamma}_v$ for some vertex space.
   \item $f$ is transverse, relative to condition (1) to $\cup \hat{\Gamma}_e$.
  \end{enumerate}
  
  Recall that in our case, the vertex spaces being considered above are just copies of $\hat{\F}$ with the electrocuted metric (obtained 
  from $\F$ by coning-off the collection of subgroups $F^i$).

\item \textbf{Thin hallway}: A hallway is $\delta-$thin if $d(f(i,t), f(i+1,t)) \leq \delta$ for 
all $i, t$.
\item A hallway is $\lambda-$hyperbolic if 
$$\lambda l(f(\{0\}\times I)) \leq \text{max}\{l(f(\{-m\}\times I)), l(f(\{m\}\times I))\}$$
 
 \item \textbf{Essential hallway}: A hallway is essential if the edge path in $T$ resulting from projecting $\hat{\Gamma}$ onto $T$
 does not backtrack (and hence is a geodesic segment in the tree $T$).
 \item \textbf{Cone-bounded hallway} (from \cite[Definition 3.4]{MjR-08}): An essential hallway of length $2m$ is cone-bounded if 
 $f(i\times \partial I)$ lies in the cone-locus for $i=\{-m, ...., m\}$. 
 
 Recall that in our case, the connected components of the cone-locus are $T_j$'s which are the cosets of 
 $\Gamma_i$ (post electrocuting the cosets of $F^i$ in $\Gamma_i$) inside $\Gamma$.

 \item \textbf{Hallways flare condition} (from \cite{BF-92}, \cite{MjR-08}): 
 The induced tree of coned-off spaces, $\hat{\Gamma}$, is said to 
 satisfy the hallways flare condition if there exists $\lambda> 1, m\geq 1$ such that 
 for all $\delta$ there is some constant $C(\delta)$ such that any $\delta-$thin essential 
 hallway of length $2m$ and girth at least $C(\delta)$ is $\lambda-$hyperbolic.

 In our context, Proposition \ref{strictflare} establishes that hallways flare condition is satisfied for $\hat{\Gamma}$ (with $\lambda= 2$), 
 since $\hat{\Gamma}$ is obtained from $\Gamma$ by electrocuting cosets of $F^i$, for each $i$.
 Thus $\hat{\Gamma}$ is a hyperbolic metric space by using the Bestvina-Feighn combination theorem and $\hat{\Gamma}$ is weakly hyperbolic 
 relative to the collection $\mathcal{T}$ (\cite[Lemma 3.8]{MjR-08}).
 
 Once the hyperbolicity of $\hat{\Gamma}$ is established, we can proceed to the second stage of 
 electrocution, where we electrocute the the maximal cone-subtrees, $T_j$'s. The resulting space is quasi-isometric to electrocuting 
 the $C_j$'s inside $\Gamma$ to a cone-point directly (see proof of \cite[Theorem 4.1]{MjR-08}) and thus this step shows that $\Gamma$ 
 is weakly hyperbolic relative to the collection of spaces $C_j$ (equivalently, relative to the collection of mapping tori subgroups $\Gamma_i$).
 
 \item \textbf{Cone-bounded hallways strictly flare condition} (from \cite[Definition 3.6]{MjR-08}): 
 The induced tree of coned-off spaces $\hat{\Gamma}$, is said to satisfy the cone-bounded hallways strictly flare condition if there exists 
 $\lambda>1, m\geq 1$ such that any cone-bounded hallway of length $2m$ is $\lambda-$hyperbolic.

 In our case, this condition is also verified in Proposition \ref{strictflare}, since each connected component of the cone-locus is just $\Gamma_i$ with the 
 electrocuted metric obtained by coning-off the subgroup $F^i$ and it's cosets (in $\Gamma_i$) and this electrocuted $\Gamma_i$ can be viewed as a subspace  of 
 $\hat{\Gamma}$ (for which the flaring condition holds). 
 
 \end{description}

 Hence we have the following theorem by applying \cite[Theorem 4.6]{MjR-08}
 
 \begin{proposition}\label{relhypext}
  Let $\phi\in\out$ be rotationless and exponentially growing outer automorphism equipped with a dual lamination pair $\Lambda^\pm_\phi$, which 
  are topmost.
  Also let $\mathcal{A}_{na}(\Lambda^\pm_\phi) = \{[F^1], [F^2],...., [F^k]\} $ denote the nonattracting subgroup system for $\Lambda^\pm_\phi$.
  If  $F^i$'s denote representatives of $[F^i]$ such that $\Phi_i(F^i)=F^i$ for some lift $\Phi_i$, then the extension group 
  $\Gamma$ in the short exact sequence $$1 \to \F \to \Gamma \to \langle \phi \rangle \to 1 $$
  is strongly hyperbolic relative to the collection of subgroups $\{F^i \rtimes_{\Phi_i} \mathbb{Z}\}$.
 \end{proposition}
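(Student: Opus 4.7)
The plan is to apply the Mj--Reeves strong combination theorem \cite[Theorem 4.6]{MjR-08} to the natural tree-of-spaces decomposition of $\Gamma$. View $\Gamma$ as a tree of metric spaces over $T = \mathrm{Cay}(\langle\phi\rangle)$ (a line, since exponentially growing $\phi$ has infinite order): every vertex space and every edge space is a copy of the Cayley graph of $\F$, with edge-to-vertex identifications given by (a fixed lift of) $\phi$. By Lemma \ref{NAS}(7), $\mathcal{A}_{na}(\Lambda^\pm_\phi)$ is malnormal, and combined with finite generation and quasiconvexity of each $F^i$, this makes $\F$ strongly hyperbolic relative to $\{F^i\}$. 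Replacing each vertex space by its electric completion $\hat{\F}$ produces a tree of strongly relatively hyperbolic spaces whose total space is the partially electrocuted $\hat{\Gamma}$ described in the excerpt.

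First I verify the three preliminary hypotheses of Mj--Reeves. The q.i.-embedded and q.i.-preserving electrocution conditions hold because each edge-to-vertex gluing is an isometry of $\F$ (the identity on one side and a marking-preserving lift of $\phi$ on the other), which in particular permutes the collection $\{F^i\}$. The strictly type-preserving condition is the content of the standing choice $\Phi_i(F^i) = F^i$: the horosphere-like pieces (cosets of $F^i$) are sent to horosphere-like pieces of the same type under the gluings.

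The principal work is to verify the hallways flare condition and the cone-bounded hallways strictly flare condition, both of which follow from Proposition \ref{strictflare}. A thin essential hallway of length $2m$ corresponds to a central word $w \in \F$, read along the $0$-row in the electric metric, together with its rows $\Phi^{\pm m}_\#(w)$. The girth requirement for the hallways flare condition forces $w \notin \bigcup F^i$ once the girth exceeds $1$, since whole $F^i$-cosets collapse to single cone points. For a cone-bounded hallway, each row runs between distinct cone-locus points, so again the word $w$ realizing it sits in $\F \setminus \bigcup F^i$. In either case, Proposition \ref{strictflare} produces $N$ such that for $m \geq N$
\[
2\,|w|_{el} \;\leq\; \max\{|\Phi^{m}_\#(w)|_{el},\, |\Phi^{-m}_\#(w)|_{el}\},
\]
yielding $\lambda = 2$ and $m = N$ simultaneously for both flaring conditions.

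Invoking \cite[Theorem 4.6]{MjR-08}, $\Gamma$ is strongly hyperbolic relative to the collection $\mathcal{C}$ of maximal cone-subtrees of horosphere-like spaces. Because $\Phi_i$ fixes $F^i$ setwise, each connected component of the cone-locus is a line in $T$, and the corresponding $C_j$ is quasi-isometric to the mapping torus $F^i \rtimes_{\Phi_i} \mathbb{Z}$; this gives the claimed peripheral structure. The delicate point — and really the only place where something could go wrong — is the verification that the central row of a cone-bounded hallway represents a word outside $\bigcup F^i$, so that Proposition \ref{strictflare} applies. This is forced by the cone-boundedness together with malnormality of $\mathcal{A}_{na}(\Lambda^\pm_\phi)$, which ensures that distinct cone-locus components cannot be connected by a path lying in a single $F^i$-coset.
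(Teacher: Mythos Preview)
Your proposal is correct and follows essentially the same route as the paper: you set up $\Gamma$ as a tree of strongly relatively hyperbolic spaces over the line $T=\mathrm{Cay}(\langle\phi\rangle)$, check the three preliminary Mj--Reeves hypotheses using invariance of the $[F^i]$, verify both the hallways flare and cone-bounded hallways strictly flare conditions via Proposition~\ref{strictflare}, and then read off the peripheral structure as the mapping tori $F^i\rtimes_{\Phi_i}\mathbb{Z}$. One small inaccuracy: the edge-to-vertex gluings are quasi-isometries (given by a lift $\Phi$) rather than isometries of $\F$, but this is exactly what the q.i.-embedded condition asks for and does not affect the argument.
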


 We would like to point out that one could have also used the Combination theorem due to Gautero \cite{Gau-16} 
 to deduce the same result, since Proposition \ref{conjflare} essentially shows that the automorphism $\phi$ is ``relatively hyperbolic'' 
 in the sense of Gautero-Lustig \cite{Gau-07}.
 
 \subsection{Atoroidal case}
 
 Observe that if $\phi$ is \textit{atoroidal} (i.e. does not have any periodic conjugacy class) then $\phi$ is necessarily 
 exponentially growing (see \cite[Lemma 3.1]{self17}). This can also be seen by using the Bestvina-Feighn-Handel theorem developed in 
  \cite{BFH-05} which implies that any polynomially growing outer automorphism necessarily has a periodic conjugacy class.
 Thus we can use Proposition \ref{relhypext} to set up a recursion 
 on each mapping tori $\{F^i \rtimes_{\Phi_i}\mathbb{Z}\}$ in the following way:
 \begin{enumerate}
  \item If the mapping tori $\{F^i \rtimes_{\Phi_i}\mathbb{Z}\}$ is hyperbolic, we stop. 
  \item If the restriction of $\phi$ to  $F^j$ is polynomially growing then using the Kolchin-type theorem of Bestvina-Feighn-Handel 
  \cite{BFH-05} we know that $\phi$ must fix some conjugacy class in $F^j$ and therefore $\phi$ cannot be atoroidal. Hence this case is 
  not possible.
  \item If $\text{rank}(F^i)\leq 2$ , then again this implies $\phi$ fixes some conjugacy class and therefore cannot be atoroidal. Hence 
  this case is not possible.
  \item If none of the above cases happen, then we may continue recursively by considering the restriction $\phi\in\mathsf{Out}(F^i)$ (which is also 
  rotationless)
  and applying proposition \ref{relhypext}.
  Since each subsequent component, say $F^{i,j}_\Phi$, obtained from $F^i_\Phi$ in this process is a proper free factor of $\F$ by itself (due to item (5) in 
  \ref{NAS}), the rank of such $F^{i,j}_\Phi$ drops at each step of recursion. 
  Therefore this process must stop when we have $\mathsf{rank}(F^{i,j,k,..,s})=3$
  and  the mapping tori corresponding to this invariant subgroup must be hyperbolic (any outer automorphism of a free group of rank 2 always fixes 
  some conjugacy class).
    
 \end{enumerate}
 
 This shows that for an atoroidal $\phi$, the collection of parabolic subgroups obtained in Proposition \ref{relhypext} can be made finer
 (in finitely many steps) until eventually we have that 
 each subgroup in the collection is a hyperbolic group. Now using two well known facts in the theory of hyperbolic groups, namely:
 \begin{itemize}
  \item If $\Gamma$ is (strongly) hyperbolic relative to the finite collection ${\{H_i\}}_i$ and each $H_i$ is (strongly) hyperbolic relative to 
  the finite collection ${\{K^i_j\}}_j$, then $\Gamma$ is (strongly) hyperbolic relative to the finite collection ${\{K^i_j\}}_{i,j}$.
  \item If $\Gamma$ is (strongly) hyperbolic relative to the finite collection ${\{H_i\}}_i$ and each $H_i$ is hyperbolic, then $\Gamma$ is hyperbolic.
 \end{itemize}
 we can conclude that $\Gamma$ itself must be hyperbolic when $\phi$ is atoroidal. Thus, as a corollary of Proposition \ref{relhypext}, we have obtained a 
 new proof of Brinkmann's theorem:
 
 \begin{corollary}\label{atoroidal}
  $\phi\in\out$ is atoroidal if and only if the extension group $\Gamma$, in the short exact sequence $1\to\F\to\Gamma\to\langle\phi\rangle\to 1 $, 
  is a hyperbolic group.
   \end{corollary}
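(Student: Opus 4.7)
The bulk of the argument is already sketched in the paragraphs preceding the statement, so the plan is essentially to organize it into a clean proof with both directions verified and the reductions justified.

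For the easy direction I would argue as follows. Suppose $\Gamma$ is hyperbolic. If $\phi$ had a periodic conjugacy class, say $\phi^k$ fixes $[c]$ with $c \in \F$ of infinite order, then lifting gives an element $t \in \Gamma$ with $t c t^{-1} = w c w^{-1}$ for some $w \in \F$. Then $w^{-1}t$ commutes with $c$, producing a $\mathbb{Z}^2$ subgroup in $\Gamma$, which contradicts hyperbolicity. Hence $\phi$ is atoroidal.

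For the main direction, I would first reduce to the rotationless case: since $\phi^K$ is rotationless for some $K>0$, and the mapping torus of $\phi^K$ is a finite-index subgroup of $\Gamma$ (hence hyperbolic iff $\Gamma$ is), and atoroidality is invariant under passing to powers, I may assume $\phi$ is rotationless. Next, atoroidal implies exponentially growing (by \cite[Lemma 3.1]{self17}, or alternatively via the Kolchin-type result of \cite{BFH-05}: a polynomially growing $\phi$ always fixes some conjugacy class). Choose a topmost dual lamination pair $\Lambda^\pm_\phi$. Since $\phi$ is atoroidal, the associated EG stratum cannot be geometric (a geometric stratum carries a periodic boundary conjugacy class), so by Lemma \ref{NAS}(6), $\mathcal{A}_{na}(\Lambda^\pm_\phi) = \{[F^1],\ldots,[F^k]\}$ is a free factor system, and each $F^i$ is a \emph{proper} free factor of $\F$. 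Proposition \ref{relhypext} then yields that $\Gamma$ is strongly hyperbolic relative to the collection $\{F^i \rtimes_{\Phi_i} \mathbb{Z}\}$.

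I now set up the recursion promised in the preceding discussion. Each restricted automorphism $\phi_i \in \mathrm{Out}(F^i)$ is rotationless (Lemma 2.2(1)) and atoroidal (a periodic conjugacy class in $F^i$ would project to a periodic conjugacy class in $\F$). If $\mathsf{rank}(F^i) \le 2$, then $\phi_i$ would necessarily fix a conjugacy class, contradicting atoroidality; hence this base case cannot occur and $\mathsf{rank}(F^i) \ge 3$. I then re-apply Proposition \ref{relhypext} to each $\phi_i$, using that $F^i$ itself has rank $\ge 3$. At each step of the recursion the ranks of the new peripheral free factors strictly decrease (by item (5) of Lemma \ref{NAS} and the fact that each new $\mathcal{A}_{na}$ is again a free factor system). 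Therefore the recursion must terminate after finitely many steps at a finite collection of mapping tori which are already hyperbolic (since no further electrocution is needed/possible).

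The final step is to assemble the conclusions using two standard facts about relative hyperbolicity: (i) if $\Gamma$ is strongly hyperbolic relative to $\{H_i\}$ and each $H_i$ is strongly hyperbolic relative to $\{K^i_j\}_j$, then $\Gamma$ is strongly hyperbolic relative to $\{K^i_j\}_{i,j}$; and (ii) a group strongly hyperbolic relative to a finite collection of hyperbolic subgroups is itself hyperbolic. Iterating (i) along the finite recursion tree collapses everything into the statement that $\Gamma$ is strongly hyperbolic relative to a finite family of hyperbolic groups, and then (ii) yields hyperbolicity of $\Gamma$. The main obstacle I anticipate is bookkeeping the recursion cleanly, in particular verifying at each stage that the hypotheses of Proposition \ref{relhypext} (rotationless, exponentially growing, topmost lamination pair available) genuinely transfer to the restricted automorphism $\phi_i$ on $F^i$; once that is in place, the rank-drop argument forces termination and the two gluing facts do the rest.
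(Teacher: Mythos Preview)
Your proposal is correct and follows essentially the same approach as the paper: reduce to a rotationless power, apply Proposition~\ref{relhypext}, recurse on the peripheral subgroups using that atoroidality and rotationlessness descend to the restrictions, observe that rank strictly drops so the recursion terminates, and then assemble via the two standard facts about nested relative hyperbolicity. You add two small improvements over the paper's write-up: you supply the easy ``only if'' direction (the $\mathbb{Z}^2$ obstruction), and you make explicit why the nonattracting subgroup system is a genuine free factor system at each stage (atoroidal rules out geometric strata, so Lemma~\ref{NAS}(6) applies), a point the paper's recursion uses but leaves implicit.
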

   
   \begin{proof}
    Pass to a rotationless power $\phi'$ of $\phi$ and consider the mapping torus $\Gamma'$ of $\phi'$. The discussion above shows that 
    $\Gamma'$ is hyperbolic. Since $\Gamma'$ is a finite-index subgroup of the mapping torus, $\Gamma$, of $\phi$, they are quasi-isometric. 
    Therefore $\Gamma$ is also hyperbolic. 
   \end{proof}

 The converse of Proposition \ref{relhypext} has already been proven, and is present implicitly in the work of Macura \cite{Mak-02} as was first 
 pointed out in the work of Hagen-Wise \cite{HagenW-16}.  In \cite{Mak-02} the author studies the divergence function for the free-by-cyclic 
 extensions induced by polynomially growing outer automorphisms and proves that the divergence function for an extension induced by 
 an outer automorphism of growth order $r$, is approximately $x^{r+1}$. But it is well known that relatively hyperbolic groups have 
 exponential divergence function (see \cite{Sisto-12}). The author thanks Mark Hagen for pointing this out. 
 
 \textbf{Remark:} In the work of Macura, it is 
 implicit that for a polynomially growing outer automorphism $\phi$, the induced mapping torus 
 group $\F\rtimes_\Phi\mathbb{Z}$ is a \textit{thick metric space} of order $r$ (in the sense of \cite{BDM-09})  
 for any lift $\Phi$ of $\phi$ and hence $\F\rtimes_\Phi\mathbb{Z}$ cannot be relatively hyperbolic with respect to any finite collection of 
 subgroups. It has been communicated to the author that Hagen and Niblo are  writing down a explicit proof of this fact \cite{HagNib-18}.
 
 Thus in light of the above discussion, we have the following theorem:
 \begin{theorem}\label{main}
  If $\phi\in\out$, then the extension group $\Gamma$ in the short exact sequence 
  $$1 \to \F \to \Gamma \to \langle \phi \rangle \to 1 $$ is relatively hyperbolic if and only if $\phi$ is exponentially growing.
 \end{theorem}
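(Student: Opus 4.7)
The plan is to combine Proposition \ref{relhypext} (the rotationless exponentially growing case) with a standard finite-index reduction for the forward direction, and to appeal to divergence estimates of Macura for the reverse direction; both reductions were flagged in the discussion preceding the theorem statement, so very little new work is required.

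For the forward implication, suppose $\phi$ is exponentially growing. By \cite{FH-11} there is a positive integer $K$ such that $\phi^K$ is rotationless, and $\phi^K$ remains exponentially growing. In particular $\phi^K$ admits at least one topmost attracting lamination $\Lambda^+_{\phi^K}$; by \cite[Corollary 6.0.1]{BFH-00}, as recalled in the discussion of topmost laminations in Section \ref{exp}, its dual $\Lambda^-_{\phi^K}$ is topmost for $\phi^{-K}$, so we obtain a topmost dual lamination pair. Proposition \ref{relhypext} then applies directly and shows that the mapping torus $\Gamma_K$ of $\phi^K$ is strongly hyperbolic relative to the finite collection $\{F^i \rtimes_{\Phi_i} \mathbb{Z}\}$ coming from (representative lifts of) the nonattracting subgroup system $\mathcal{A}_{na}(\Lambda^\pm_{\phi^K})$. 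The natural inclusion $\Gamma_K \hookrightarrow \Gamma$ exhibits $\Gamma_K$ as a finite-index subgroup of $\Gamma$, so the two groups are quasi-isometric, and I invoke Drutu's quasi-isometric invariance of (strong) relative hyperbolicity \cite{Dr-09} to transfer the relatively hyperbolic structure from $\Gamma_K$ to $\Gamma$, with peripheral subgroups obtained as the commensurators (or, equivalently, the natural mapping-torus extensions of $F^i$ by a power of $\phi$) inside $\Gamma$.

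For the reverse implication, I argue by contrapositive. If $\phi$ is not exponentially growing, then by the Bestvina-Feighn-Handel dichotomy $\phi$ is polynomially growing of some order $r \geq 0$. Macura \cite{Mak-02} then shows that the mapping torus $\Gamma$ has divergence function bounded above by a polynomial of degree approximately $r+1$, and in particular subexponential; equivalently, as noted in the remark after Corollary \ref{atoroidal}, $\Gamma$ is a thick metric space in the sense of \cite{BDM-09}. Since any finitely generated group that is strongly hyperbolic relative to a finite family of proper subgroups must have exponential divergence by \cite{Sisto-12}, these two facts are incompatible, forcing $\Gamma$ not to be relatively hyperbolic with respect to any finite collection of finitely generated subgroups.

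The main obstacle I anticipate is ensuring, in the forward direction, that Drutu's invariance produces genuine \emph{strong} relative hyperbolicity (i.e., preserves the bounded coset penetration property) with a well-defined malnormal system of peripherals for $\Gamma$ itself, rather than only for a finite index subgroup. This is handled once one identifies the correct peripherals in $\Gamma$ as the mapping tori $F^i \rtimes_{\Phi_i'} \mathbb{Z}$ for the lift $\Phi_i'$ of $\phi$ (rather than $\phi^K$) that stabilizes the appropriate conjugate of $F^i$; malnormality of $\{F^i\}$ inside $\F$ and $\phi$-invariance of $\mathcal{A}_{na}(\Lambda^\pm_{\phi^K})$ (item (5) of Lemma \ref{NAS}) together guarantee that this enlargement of the peripheral system remains malnormal in $\Gamma$.
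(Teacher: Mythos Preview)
Your proposal is correct and follows essentially the same approach as the paper: pass to a rotationless power, apply Proposition \ref{relhypext} to get strong relative hyperbolicity of the finite-index mapping torus, then transfer via Drutu's quasi-isometric invariance \cite{Dr-09}; for the converse, invoke Macura's polynomial divergence bound \cite{Mak-02} against the exponential divergence of relatively hyperbolic groups \cite{Sisto-12}. Your extra care about identifying the peripherals in $\Gamma$ (rather than just in $\Gamma_K$) goes slightly beyond what the paper writes down, but is consistent with it.
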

\begin{proof}
 Pass to rotationless power $\phi'$ of $\phi$ and consider the mapping torus $\Gamma'$ of $\phi'$. Proposition \ref{relhypext} shows that 
 $\Gamma'$ is (strongly) relatively hyperbolic group. Since $\Gamma'$ is quasi-isometric to $\Gamma$, by using the fact that 
 relative hyperbolicity is quasi-isometry invariant (\cite{Dr-09}), we can conclude that $\Gamma$ is also (strongly) relatively hyperbolic. 
 
\end{proof}

 Before we move on to the polynomially growing case
 we would like to remark that a somewhat similar looking result as in Proposition \ref{relhypext} 
 was claimed in an yet unpublished paper of Gautero-Lustig \cite{Gau-07}.
 What they show is that the mapping torus of any outer automorphism is (strongly) hyperbolic relative to the collection of 
 ``canonical'' subgroups which contain all the polynomially growing conjugacy classes. A result similar to the Gautero-Lustig theorem 
 can be easily deduced by a repeated application of 
 Proposition \ref{relhypext} on the peripheral subgroups given in the conclusion of that proposition (similar to the argument we gave for 
 the atoroidal case \ref{atoroidal}). The two facts that allows us to do a inductive argument in this case are the following
 \begin{itemize}
  \item For a geometric lamination $\Lambda^+_\phi$, the nonattracting subgroup system can be written as
  $\mathcal{A}_{na}(\Lambda^+_\phi) = \mathcal{F}\cup \{[\langle c_1 \rangle], [\langle c_2 \rangle], ..., [\langle c_k \rangle]\}$, where 
  $\mathcal{F}$ is a free factor system and $\langle c_i \rangle$ are infinite cyclic subgroups where the conjugacy classes $[c_i]$ represents the 
  boundary components of the surface which supports the lamination. This fact can be extracted from the Subgroup Decomposition work 
  \cite{HM-13a} where the geometric models are developed to study geometric strata (see Remark 1.3 in \cite{HM-13c} or proof of 
  Proposition 5.11 \cite[Case 3b, pages 49-50]{HM-09}). This allows us to do 
  induction on the rank of the components of the nonattracting subgroup system. 
  \item For a nongeometric lamination, we know that $\mathcal{A}_{na}(\Lambda^+_\phi)$  is a free factor system (item 6, Lemma \ref{NAS}).
  
 \end{itemize}

 We record this result as a corollary:
 
 \begin{corollary}
 \label{GL}
  The mapping torus of every rotationless, exponentially growing  $\phi\in\out$ is (strongly) hyperbolic relative to a finite collection of peripheral 
  subgroups of the form $F^i\rtimes_{\Phi_i} \mathbb{Z}$, where $\Phi_i$ is a lift of $\phi$ that preserves $F^i$ 
  and the outer automorphism class of $\Phi_i$ restricted to $F^i$ is polynomially growing. 
  \end{corollary}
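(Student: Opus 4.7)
The plan is to run an induction that repeatedly applies Proposition \ref{relhypext} to the exponentially growing peripheral factors, terminating only when every peripheral factor's induced outer automorphism is polynomially growing. To begin, I apply Proposition \ref{relhypext} directly to $\phi$, obtaining that the mapping torus $\Gamma$ is strongly hyperbolic relative to a finite collection $\{F^i \rtimes_{\Phi_i}\mathbb{Z}\}$ coming from representatives of $\mathcal{A}_{na}(\Lambda^\pm_\phi)$ for a topmost dual lamination pair. For each index $i$ I restrict attention to the outer automorphism class $[\Phi_i|_{F^i}]\in \mathrm{Out}(F^i)$ and split into two cases.

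In the first case $[\Phi_i|_{F^i}]$ is polynomially growing, and I keep $F^i\rtimes_{\Phi_i}\mathbb{Z}$ as a final peripheral factor. In the second case it is exponentially growing; since $\Phi_i$ is a lift of $\phi$ (which is rotationless), its restriction is again rotationless by the first item of the rotationless lemma recalled in the preliminaries, so a topmost dual lamination pair is available inside $F^i$ and Proposition \ref{relhypext} applies again, yielding that $F^i\rtimes_{\Phi_i}\mathbb{Z}$ is strongly relatively hyperbolic with respect to a further collection of the form $\{F^{i,j}\rtimes_{\Phi_{i,j}}\mathbb{Z}\}$. I then feed this collection upward using the transitivity principle for relative hyperbolicity already stated in the atoroidal discussion: if $\Gamma$ is strongly relatively hyperbolic relative to $\{H_i\}$ and each $H_i$ is strongly relatively hyperbolic relative to $\{K^i_j\}$, then $\Gamma$ is strongly relatively hyperbolic relative to $\bigcup_{i,j}\{K^i_j\}$.

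The main obstacle is establishing termination of this recursion, and this is where the dichotomy between geometric and nongeometric laminations enters, exactly as recorded in the two bullet points just before the statement of the corollary. For a nongeometric topmost lamination, item (6) of Lemma \ref{NAS} tells me that $\mathcal{A}_{na}(\Lambda^\pm)$ is a free factor system, so each $F^i$ is a proper free factor of the ambient group and $\mathrm{rank}(F^i) < \mathrm{rank}(\F)$. For a geometric topmost lamination, $\mathcal{A}_{na}(\Lambda^\pm)$ splits as a free factor system $\mathcal{F}$ together with finitely many cyclic subgroups $[\langle c_k\rangle]$ corresponding to boundary curves of the supporting surface; on each $\langle c_k\rangle$ the restriction of $\phi$ is trivial (so polynomially growing) and its mapping torus is virtually $\mathbb{Z}^2$, which is already a terminal peripheral factor, while on each factor of $\mathcal{F}$ we again have a strict drop in rank. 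Thus on every branch of the recursion either the case-1 stopping condition is met or the rank of the free factor strictly decreases, forcing termination in finitely many steps.

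After termination, transitivity yields strong relative hyperbolicity of $\Gamma$ with respect to the final collection, and by construction every remaining peripheral subgroup has the form $F^i\rtimes_{\Phi_i}\mathbb{Z}$ with $[\Phi_i|_{F^i}]$ polynomially growing, as desired. The only bookkeeping I still need to be careful about is that at each recursive step the chosen lift $\Phi_i$ of $\phi$ really does preserve $F^i$ (so the inner mapping torus makes sense), but this is built into the way the peripheral subgroups of Proposition \ref{relhypext} were specified, and it is inherited verbatim when I iterate.
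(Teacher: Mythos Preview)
Your proof is correct and follows essentially the same approach as the paper: repeated application of Proposition \ref{relhypext} to the exponentially growing peripheral factors, transitivity of strong relative hyperbolicity, and termination via the geometric/nongeometric dichotomy for the nonattracting subgroup system (item (6) of Lemma \ref{NAS} in the nongeometric case, and the decomposition $\mathcal{A}_{na} = \mathcal{F} \cup \{[\langle c_k\rangle]\}$ in the geometric case) to force a strict rank drop on every branch that does not immediately stop. This is exactly the inductive scheme the paper sketches in the two bullet points preceding the statement of the corollary.
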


 \textbf{Remark:} After the first version of the paper was uploaded on the arxiv, it was communicated to the author by Derrik Wigglesworth 
 that he is working on proving the Gautero-Lustig version by using the CT theory.

 \section{Polynomial growth case}
 
 If $\phi\in\out$ has at most polynomial growth, then we have the following result due to Button-Kropholler:
 
 \begin{lemma}\cite[Corollary 4.3]{BuK-16}
  
  If $\phi\in\out$ is a polynomially growing outer automorphism then the mapping torus $\F\rtimes_\Phi\mathbb{Z}$ is virtually acylindrically hyperbolic for any 
  lift $\Phi$ of $\phi$, unless $\phi$ has finite order in $\out$.
  
 \end{lemma}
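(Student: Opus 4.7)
The plan is to exhibit, after passing to a finite-index subgroup, an acylindrical action of $\F\rtimes_\Phi\mathbb{Z}$ on a Bass-Serre tree arising from a Kolchin-type filtration of $\phi$. By the Kolchin-type theorem of Bestvina-Feighn-Handel \cite{BFH-05}, after replacing $\phi$ by an appropriate power we may assume $\phi$ is \emph{unipotent polynomially growing} (UPG); this replaces $\Gamma$ by a finite-index subgroup, and since acylindrical hyperbolicity is preserved when passing between a group and a finite-index subgroup, it suffices to handle this case. A UPG representative $f\colon G\to G$ comes equipped with a filtration $G_0\subset G_1\subset\cdots\subset G_k=G$ in which each new stratum is a single edge $E_i$ with $f(E_i)=E_iu_i$ for a closed path $u_i\subset G_{i-1}$. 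If $\phi$ has infinite order in $\out$, not every $u_i$ can be trivial (else $f$ would be the identity), and we choose $k$ to be the largest index with $u_k\neq 1$.

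Setting $F_{k-1}=\pi_1(G_{k-1})$ and writing $\F=F_{k-1}*\langle x\rangle$ where $x$ corresponds to $E_k$, the defining relations $txt^{-1}=xu_k$ and $tat^{-1}=\Phi|_{F_{k-1}}(a)$ for $a\in F_{k-1}$ present $\Gamma$ as an HNN extension of the smaller mapping torus $\Gamma':=F_{k-1}\rtimes_{\Phi|_{F_{k-1}}}\langle t\rangle$. A short calculation gives $x^{-1}tx=u_kt$, so the edge subgroup is $\langle t\rangle\leq\Gamma'$, identified with $\langle u_kt\rangle\leq\Gamma'$; since $u_k\in F_{k-1}$ is nontrivial and $\langle t\rangle\cap\F=\{1\}$ in $\Gamma'$, these are two \emph{distinct} infinite cyclic subgroups. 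The stable letter $x$ therefore acts loxodromically on the associated Bass-Serre tree $T$, and a second independent loxodromic (for instance a suitable $\Gamma'$-conjugate of $x$) can be arranged, yielding a non-elementary action of $\Gamma$ on $T$.

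The remaining and most delicate step is to verify that the $\Gamma$-action on $T$ is acylindrical: one must show that pointwise stabilizers of sufficiently long segments in $T$ are uniformly bounded, which amounts to controlling intersections of $\Gamma'$-conjugates of $\langle t\rangle$ with $\langle t\rangle$ and $\langle u_kt\rangle$. The natural strategy is to induct on $k$: the base group $\Gamma'$ is itself the mapping torus of a UPG automorphism of the smaller-rank free group $F_{k-1}$, so one inductively extracts structural information about its cyclic subgroups, in particular almost-malnormality of $\langle t\rangle$ in $\Gamma'$. Combined with $\langle t\rangle\cap\langle u_kt\rangle=\{1\}$ inside $\Gamma'$, this delivers the acylindricity required, and Osin's characterization of acylindrical hyperbolicity via non-elementary acylindrical actions on hyperbolic spaces concludes the argument.

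The main obstacle is precisely this acylindricity verification, since the required control on conjugates of a cyclic edge subgroup inside a mapping torus of a polynomially growing automorphism is not formal. An alternative route that sidesteps the more awkward bookkeeping is to directly produce a hyperbolically embedded subgroup using the Dahmani-Guirardel-Osin machinery, with the nontriviality of $u_k$ as the essential algebraic input; the finite-order exclusion of the hypothesis corresponds exactly to the degenerate case in which every $u_i$ is trivial, forcing the UPG power of $\phi$ to be the identity.
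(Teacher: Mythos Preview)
The paper does not prove this lemma; it is quoted from Button--Kropholler \cite[Corollary 4.3]{BuK-16} and used as a black box, so there is no proof in the paper to compare your attempt against.

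Your sketch is in the right spirit---produce a splitting of the mapping torus with infinite cyclic edge groups and analyze the action on the Bass--Serre tree---but it is explicitly incomplete. You correctly identify the acylindricity verification as ``the main obstacle'' and then do not carry it out. More seriously, the inductive route you propose via almost-malnormality of $\langle t\rangle$ in $\Gamma'$ cannot work as stated: mapping tori of UPG automorphisms always contain $\mathbb{Z}^2$ subgroups (any fixed edge in the filtration gives an element $c\in F_{k-1}$ commuting with $t$), so $\langle t\rangle$ is \emph{never} almost malnormal in $\Gamma'$, and your inductive hypothesis fails at the very first step. What is actually needed is a strictly weaker condition on the edge groups---in the Minasyan--Osin framework, a weak malnormality/``balanced'' condition on the cyclic edge groups suffices to force acylindrical hyperbolicity---and establishing that this weaker condition holds for the splitting at hand is the substantive content of the Button--Kropholler argument. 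As written, your proposal stops exactly where the real work begins, and the one concrete mechanism you suggest for closing the gap is false.
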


 Combining the above lemma with Theorem \ref{main} we conclude that 
 
 \begin{corollary}\label{coro}
  If $\phi\in\out$ then the extension group $\F\rtimes_\Phi \mathbb{Z}$ is virtually acylindrically hyperbolic but not relatively hyperbolic 
  if and only if $\phi$ is polynomially growing and has infinite order.
 \end{corollary}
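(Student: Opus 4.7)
The corollary is a direct combination of Theorem~\ref{main} with the Button--Kropholler lemma just quoted, plus a short additional observation to handle the finite-order case. I would split the biconditional into its two implications and treat them separately.

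For the ``if'' direction, assume $\phi$ is polynomially growing and of infinite order. Polynomial growth is the negation of exponential growth, so Theorem~\ref{main} immediately forces $\Gamma = \F \rtimes_\Phi \mathbb{Z}$ to fail to be relatively hyperbolic. The Button--Kropholler lemma then yields that $\Gamma$ is virtually acylindrically hyperbolic, since $\phi$ has infinite order. This settles one direction.

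For the ``only if'' direction, assume $\Gamma$ is virtually acylindrically hyperbolic but not relatively hyperbolic. Theorem~\ref{main} immediately gives that $\phi$ is not exponentially growing, hence polynomially growing. The remaining step is to rule out finite order, which I would do by contrapositive: if $\phi$ has finite order $n$ in $\out$, pick a lift $\Phi \in \aut$; then $\Phi^n$ is inner, say $\Phi^n = \iota_w$ for some $w \in \F$, and a short computation in the semidirect-product presentation $\Gamma = \langle \F, t \mid t x t^{-1} = \Phi(x) \rangle$ shows that $w^{-1} t^n$ commutes with every element of $\F$ and with $t^n$. Hence $w^{-1} t^n$ generates an infinite cyclic central subgroup of the index-$n$ subgroup $\F \rtimes \langle t^n \rangle \leq \Gamma$. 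Since acylindrically hyperbolic groups have finite center and the class of acylindrically hyperbolic groups is closed under passage to finite-index subgroups, no finite-index subgroup of $\F \rtimes \langle t^n \rangle$, and therefore no finite-index subgroup of $\Gamma$, can be acylindrically hyperbolic, contradicting the hypothesis.

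The only mild obstacle is the finite-order verification, and it reduces to a few lines of semidirect-product algebra together with the standard fact that acylindrically hyperbolic groups have finite center; once these are in place the corollary is pure bookkeeping on top of Theorem~\ref{main} and the Button--Kropholler input.
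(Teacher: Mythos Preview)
Your argument is correct and follows the same route the paper takes: combine Theorem~\ref{main} with the Button--Kropholler lemma. The paper offers no further detail, so your explicit treatment of the finite-order case---building a central element $w^{-1}t^n$ in the finite-index subgroup $\F\rtimes\langle t^n\rangle$ and invoking that acylindrically hyperbolic groups have finite center and pass to finite-index subgroups---is a welcome addition rather than a departure.
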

 
The ``virtually'' condition is imposed since the result of Button-Kropholler shows that one may need to pass to some power of $\phi$ to get the 
acylindrically hyperbolic conclusion and it is as open question if acylindrical hyperbolicity is quasi-isometry invariant (Mark Hagen pointed this out 
to the author).
 
 In conclusion of this section we would like to point out that this answers a question asked by Minasyan-Osin \cite[Problem 8.2]{MinOs-15}. 
 We now know that $\F\rtimes_\Phi\mathbb{Z}$ is not (virtually) acylindrically hyperbolic if and only if $\phi$ has finite order in $\out$.

 \section{Applications:}
 \subsection{Free-by-Free hyperbolic extensions}
 Consider the short exact sequence $$1\to\F\to\Gamma\to Q\to 1 $$ where $Q$ is a free subgroup of $\out$.
 In this section we shall give the construction of a free-by-free hyperbolic extension $\Gamma$ where the elements of 
 quotient group $Q$  
   are not necessarily fully irreducible. So far in the study of $\out$, the only examples of free-by-free 
 hyperbolic extensions which are known; necessarily assume that every element of $Q$ is fully irreducible. 
 So Theorem \ref{free-by-free} , in a way, gives a  new class of examples of free-by-free hyperbolic extensions.

 \textbf{Standing assumptions:}
 \label{SA}
 We let $\phi,\psi$ be outer automorphisms with dual lamination pairs $\Lambda^\pm_\phi$ and $\Lambda^\pm_\psi$ which are topmost and assume 
 that the following hold:
 \begin{enumerate}
  \item $\{\Lambda^+_\phi, \Lambda^-_\phi\} \cap \{\Lambda^+_\psi, \Lambda^-_\psi\} = \emptyset$ 
  \item $\mathcal{A}_{na}(\Lambda^\pm_\phi)$ and $\mathcal{A}_{na}(\Lambda^\pm_\psi)$ are both trivial.
  \item $\Lambda^+_\phi$ is invariant under $\phi$ and $\Lambda^-_\phi$ is invariant under $\phi^{-1}$. Similarly for 
  $\psi, \psi^{-1}$.
  
 \end{enumerate}

 \begin{lemma}\label{mutualatt}
  If $\phi,\psi$ are outer automorphisms which satisfy the standing assumptions sbove, then we have the following:
  \begin{enumerate}
   \item $\phi, \psi$ are both atoroidal. 
   \item Generic leaves of $\Lambda^+_\phi, \Lambda^-_\phi$ are attracted to $\Lambda^+_\psi$ under action of $\psi$. Similarly, 
  with roles of $\phi$, $\psi$ reversed.
  \item Generic leaves of $\Lambda^+_\phi, \Lambda^-_\phi$ are attracted to $\Lambda^-_\psi$ under action of $\psi^{-1}$. Similarly, with 
  roles of $\phi, \psi$ reversed.
  \end{enumerate}

 \end{lemma}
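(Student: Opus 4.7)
The proof of item (1) is immediate from the theory of the nonattracting subgroup system. Suppose $\phi$ had a periodic conjugacy class $[c]$ with $\phi^k[c]=[c]$ for some $k\ge 1$; then the forward orbit $\{\phi^n[c]\}_{n\ge 0}$ lies in a finite set, so it cannot converge in the weak topology to any leaf of an attracting lamination, and in particular $[c]$ is not attracted to $\Lambda^+_\phi$. Lemma~\ref{NAS}(3) would then force $[c]$ to be carried by $\mathcal{A}_{na}(\Lambda^+_\phi)=\mathcal{A}_{na}(\Lambda^\pm_\phi)$, but this subgroup system is trivial by standing assumption (2), and the trivial system carries no nontrivial conjugacy class. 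Hence $\phi$ is atoroidal, and $\psi$ is atoroidal by the same argument.

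For items (2) and (3), fix a generic leaf $\ell$ of $\Lambda^+_\phi$; the statement for $\Lambda^-_\phi$ will follow by the same proof after invoking Lemma~\ref{NAS}(5) to conclude that $\mathcal{A}_{na}(\Lambda^-_\phi)=\mathcal{A}_{na}(\Lambda^+_\phi)$ is also trivial, and the ``$\phi,\psi$ reversed'' halves are verbatim the same argument with the two automorphisms exchanged. The goal is to show that $\ell$ is attracted to $\Lambda^+_\psi$ under iteration of $\psi$ and simultaneously to $\Lambda^-_\psi$ under iteration of $\psi^{-1}$. Because $\mathcal{A}_{na}(\Lambda^\pm_\psi)$ is trivial, $\ell$ is automatically not carried by it, so Lemma~\ref{WAT} immediately delivers at least one of the two attractions; the real content of the claim is upgrading ``at least one'' to ``both,'' and this is precisely where the distinctness assumption (1) on the four laminations enters.

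The upgrade will go through the quantitative ``moreover'' clause of Lemma~\ref{WAT}: for any attracting neighborhoods $V^+_\psi,V^-_\psi$ of $\Lambda^+_\psi,\Lambda^-_\psi$ respectively, there exists $l\ge 0$ such that either $\psi^l(\ell)\in V^+_\psi$ or $\ell\in V^-_\psi$ (the carrying option being excluded). If $\ell\notin\Lambda^-_\psi$, one may choose $V^-_\psi$ small enough to exclude $\ell$, and then $\psi^l(\ell)\in V^+_\psi$ for some $l$; combining this with the forward $\psi^p$-invariance of attracting neighborhoods and the fact that such neighborhoods form a weak basis at generic leaves of $\Lambda^+_\psi$ promotes the one-time containment to full weak attraction of $\ell$ to $\Lambda^+_\psi$ under $\psi$. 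A symmetric argument starting from $\ell\notin\Lambda^+_\psi$ yields attraction of $\ell$ to $\Lambda^-_\psi$ under $\psi^{-1}$. The hard part, which I expect to be the main obstacle, is establishing the separation $\ell\notin\Lambda^+_\psi\cup\Lambda^-_\psi$: since $\ell$ is generic for $\Lambda^+_\phi$ its weak closure is exactly $\Lambda^+_\phi$, so a membership $\ell\in\Lambda^-_\psi$ would upgrade by closedness of $\Lambda^-_\psi$ to the inclusion $\Lambda^+_\phi\subset\Lambda^-_\psi$, and one must combine assumption (1) with the structural fact that distinct attracting laminations of rotationless outer automorphisms whose nonattracting subgroup systems are trivial cannot be properly nested (any such inclusion being forced to be an equality by the $\phi$- and $\psi$-invariance of the two laminations together with the minimality of attracting laminations under weak closure), which then contradicts (1); the analogous argument rules out $\ell\in\Lambda^+_\psi$ and completes the upgrade.
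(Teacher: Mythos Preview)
Your overall architecture matches the paper's: item~(1) is handled identically, and for items~(2)--(3) both you and the paper reduce to the weak attraction theorem (Lemma~\ref{WAT}) after establishing the separation $\ell\notin\Lambda^-_\psi$ (and symmetrically $\ell\notin\Lambda^+_\psi$). You correctly flag this separation as the crux.

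The gap is in how you resolve it. You reduce $\ell\in\Lambda^-_\psi$ to the inclusion $\Lambda^+_\phi\subset\Lambda^-_\psi$ and then invoke a ``structural fact'' that such an inclusion must be an equality, citing ``$\phi$- and $\psi$-invariance'' and ``minimality under weak closure.'' Neither ingredient does the work: $\Lambda^+_\phi$ is $\phi$-invariant and $\Lambda^-_\psi$ is $\psi^{-1}$-invariant, but there is no common automorphism under which both are invariant, so invariance gives no leverage; and attracting laminations are not minimal closed invariant sets---they routinely contain nongeneric leaves and proper sublaminations---so ``minimality'' is not available either. As stated, nothing you wrote excludes a strict inclusion $\Lambda^+_\phi\subsetneq\Lambda^-_\psi$.

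The paper closes this gap by a height argument in a CT for $\psi^{-1}$ with EG stratum $H_r$ associated to $\Lambda^-_\psi$. If the realization of $\ell$ has height $<r$, then $\ell$ lies in the nonattracting subgraph and hence is carried by $\mathcal{A}_{na}(\Lambda^\pm_\psi)$, contradicting triviality. If $\ell$ has height $r$, then \cite[Lemma~3.1.15]{BFH-00} forces the weak closure of $\ell$ to be all of $\Lambda^-_\psi$; since that closure is already $\Lambda^+_\phi$, one gets $\Lambda^+_\phi=\Lambda^-_\psi$, contradicting standing assumption~(1). This case split is the missing content in your proposal: the genuine input is the combination of the nonattracting-subgraph construction (to eliminate low height) with the BFH density lemma (to force equality at the top height), not an abstract minimality principle.
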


\begin{proof}

 \textit{Proof of (1): } Since $\mathcal{A}_{na}(\Lambda^\pm_\phi)=\emptyset$, every conjugacy class is attracted to $\Lambda^+_\phi$ 
 under the action of $\phi$. Hence $\phi$ cannot have periodic conjugacy classes. Similar argument works for $\psi$.

 \textit{Proof of (2): } Let $\gamma^+_\phi$  denote a generic leaf of $\Lambda^+_\phi$. We claim that $\gamma^+_\phi$ 
 cannot be a leaf of $\Lambda^-_\phi$. It is clear that $\gamma^+_\phi$ 
 cannot be a generic leaf of $\Lambda^-_\psi$, since otherwise the weak closure would be equal to both $\Lambda^+_\phi$ and 
 $\Lambda^-_\psi$, which would violate item (1) in the standing assumption. Also, if $H_r$ is the EG strata associated to 
 $\Lambda^-_\psi$ for some relative train track map $f:G\to G$ and if $\gamma^+_\phi$ does not have height $r$, 
 then by construction of the nonattracting subgraph we see that it is carried by $\mathcal{A}_{na}(\Lambda^\pm_\psi)$, but this is
 not possible since $\mathcal{A}_{na}(\Lambda^\pm_\psi)=\emptyset$. Hence $\gamma^+_\phi$ cannot be nongeneric leaf of $\Lambda^-_\psi$ of 
 height less than $r$. 
 If $\gamma^+_\phi$ is a nongeneric leaf of height $r$, then \cite[Lemma 3.1.15]{BFH-00} closure of $\gamma^+_\phi$ is all of $\Lambda^-_\psi$ 
 which again contradicts item (1) of the standing assumption.  
 Thus $\gamma^+_\phi$ cannot be a nongeneric leaf of $\Lambda^-_\psi$ and 
 consequently $\gamma^+_\phi$ cannot be a leaf of $\Lambda^-_\psi$.

 Choose attracting neighborhoods $V^+_\psi$ and $V^-_\psi$ of $\Lambda^+_\psi$ and $\Lambda^-_\psi$, respectively, defined by long generic leaf segments of the 
 respective laminations such that $\gamma^+_\phi\notin V^-_\psi$.
  
 By using the weak attraction theorem \ref{WAT}
 together with the standing assumption we know that either  $\gamma^+_\phi\in V^-_\psi$ or $\gamma^+_\phi$ is weakly attracted 
 to $\Lambda^+_\psi$ under iteration by $\psi$. But since we have ruled out the first possibility, $\gamma^+_\phi$  is necessarily attracted to 
 $\Lambda^+_\psi$. This implies that generic leaves of $\Lambda^+_\phi$ are attracted to $\Lambda^+_\psi$.
 
 The proof of other conclusions in item (2) and (3) follow from symmetric arguments.
\end{proof}

  \begin{definition}
  A sequence of conjugacy classes $\{\alpha_i\}$ is said to \textit{approximate} $\Lambda^+_\phi$ if 
  for any $L>0$, the ratio
  $$\frac{m(x\in S^1_i| \text{the L-nbd of x is a generic leaf segment of } \Lambda^+_\phi)}{m(S^1_i)}$$
  converges to $1$ as $i \to\infty$, where $m$ is the scaled Lebesgue measure and 
  $\tau_i:S^1_i \to G$ denotes the immersion that gives the circuit in $G$ representing 
  $\alpha_i$.
 \end{definition}

 \begin{lemma}\label{conv}
  Let $\phi, \psi$ be outer automorphisms of $\F$ which satisfy the standing assumptions above.
  Then for any sequence 
  of conjugacy classes $\{\alpha_i\}$, the sequence cannot approximate both $\Lambda^-_\phi$ and $\Lambda^-_\psi$.
 \end{lemma}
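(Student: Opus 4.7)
I would argue by contradiction: assume $\{\alpha_i\}$ approximates both $\Lambda^-_\phi$ and $\Lambda^-_\psi$. The plan is to extract from the sequence a single line $\ell$ which is a leaf of both laminations and then to rule out the existence of such an $\ell$ using the standing assumptions \ref{SA} together with the case analysis in the proof of Lemma \ref{mutualatt}.

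First I would carry out a measure-theoretic pigeonhole on the parametrizing circles $S^1_i$. Fix any $L>0$. By the approximation hypothesis, for all sufficiently large $i$, the normalized Lebesgue measure of the set of $x\in S^1_i$ whose $L$-neighborhood is a generic leaf segment of $\Lambda^-_\phi$ exceeds $3/4$, and likewise for $\Lambda^-_\psi$. Intersecting these two sets produces a subset of positive measure, so there exists a point $x_{i,L}\in S^1_i$ whose $L$-neighborhood is simultaneously a generic leaf segment of both laminations. Letting $L\to\infty$ (and choosing $i=i(L)$ accordingly) and using compactness of $\mathcal{B}$ to pass to a weakly convergent diagonal subsequence of the generic leaves obtained by extending these common segments in each lamination, one obtains a line $\ell\in\mathcal{B}$ which is a weak limit of longer and longer generic leaf segments both of $\Lambda^-_\phi$ and of $\Lambda^-_\psi$. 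Since both laminations are weakly closed, $\ell\in\Lambda^-_\phi\cap\Lambda^-_\psi$.

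To finish, I would rule out such an $\ell$ by imitating the case analysis in the proof of Lemma \ref{mutualatt}. Fix a CT representing $\psi^{-1}$ with top EG stratum $H_r$ associated to $\Lambda^-_\psi$. If the realization of $\ell$ has height strictly less than $r$, then $\ell$ is carried by $\mathcal{A}_{na}(\Lambda^{\pm}_\psi)$, contradicting standing assumption (2) that $\mathcal{A}_{na}(\Lambda^{\pm}_\psi)$ is trivial. If $\ell$ has height $r$, then whether it is generic or nongeneric in $\Lambda^-_\psi$ (the nongeneric case handled by \cite[Lemma 3.1.15]{BFH-00} together with topmostness of $\Lambda^-_\psi$), the weak closure of $\ell$ is exactly $\Lambda^-_\psi$. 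Combined with $\ell\in\Lambda^-_\phi$ and closedness of $\Lambda^-_\phi$, this yields $\Lambda^-_\psi\subseteq\Lambda^-_\phi$. Running the same case analysis on $\ell$ viewed as a leaf of $\Lambda^-_\phi$ (in a CT for $\phi^{-1}$) forces $\Lambda^-_\phi=\Lambda^-_\psi$, contradicting standing assumption (1).

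The main obstacle I anticipate is the rigorous setup of the weak-limit step: one must faithfully transfer from finite subpaths of the circuits $\alpha_i$ to bona fide leaves of the ambient laminations, extending each common length-$L$ segment to a generic leaf of both $\Lambda^-_\phi$ and $\Lambda^-_\psi$ and then extracting a diagonal subsequence that converges to a common line in both laminations. A secondary care point is that the ``nongeneric leaf of height $r$'' subcase truly produces weak closure equal to $\Lambda^-_\psi$; this uses the topmostness of $\Lambda^-_\psi$ in a crucial way, and similarly the topmostness of $\Lambda^-_\phi$ is required when the symmetric argument is applied.
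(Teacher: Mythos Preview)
Your approach is genuinely different from the paper's. The paper argues dynamically: assuming $\{\alpha_i\}$ approximates $\Lambda^-_\psi$, it first chooses an attracting neighborhood $V^-_\phi$ whose defining generic leaf segment is \emph{not} a subpath of any leaf of $\Lambda^\pm_\psi$ (justified via Lemma~\ref{mutualatt}), then invokes the uniformity clause of the weak attraction theorem to push all generic leaves of $\Lambda^-_\psi$, and hence all $\alpha_i$ for large $i$, into $V^+_\phi$ under a fixed power of $\phi$. This is incompatible with $\{\alpha_i\}$ approximating $\Lambda^-_\phi$, since it would force generic leaves of $\Lambda^-_\phi$ to be weakly attracted to $\Lambda^+_\phi$. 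Your argument is static: you extract via pigeonhole and compactness a common leaf $\ell\in\Lambda^-_\phi\cap\Lambda^-_\psi$, then run a height analysis (modeled on the proof of Lemma~\ref{mutualatt}) to force $\Lambda^-_\phi=\Lambda^-_\psi$. What the paper's route buys is that it never needs to rule out \emph{all} common leaves of the two laminations; it only needs that generic leaves of $\Lambda^-_\psi$ are attracted to $\Lambda^+_\phi$, which is exactly the content of Lemma~\ref{mutualatt}. What your route buys is conceptual clarity: the obstruction is visibly the nonexistence of a shared leaf.

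A care point you should be aware of: your line $\ell$ is merely a leaf of both laminations, not a priori birecurrent or generic in either, whereas in Lemma~\ref{mutualatt} the line in question is a generic (hence birecurrent) leaf of $\Lambda^+_\phi$. Your ``height $<r$'' subcase asserts that any such $\ell$ is carried by $\mathcal{A}_{na}(\Lambda^\pm_\psi)$; this is exactly what the paper asserts in the corresponding step of Lemma~\ref{mutualatt}, so your argument is at the same level of rigor as that lemma, but note that if $\Lambda^-_\psi$ had a proper sub-lamination $\Lambda'$ (associated to a lower EG stratum not contained in the nonattracting subgraph), then generic leaves of $\Lambda'$ would be nongeneric leaves of $\Lambda^-_\psi$ of lower height that are \emph{not} carried by $\mathcal{A}_{na}(\Lambda^\pm_\psi)$. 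You should either check that the triviality of $\mathcal{A}_{na}(\Lambda^\pm_\psi)$ rules this out, or short-circuit the issue by noting that once you have $\Lambda^-_\psi\subseteq\Lambda^-_\phi$ you can derive a contradiction directly from Lemma~\ref{mutualatt} (generic leaves of $\Lambda^-_\psi$, being leaves of $\Lambda^-_\phi$, cannot be attracted to $\Lambda^+_\phi$, contradicting that lemma). Also, the ``extending each common segment to a generic leaf of both'' phrasing in your obstacle paragraph is more than you need: it suffices that each common segment lies in a generic leaf of each lamination separately, so that the weak limit of the segments lies in both closed laminations.
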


 \begin{proof}
Suppose that $\{\alpha_i\}$ approximates $\Lambda^-_\psi$. Since our standing assumptions imply (by using Lemma \ref{mutualatt}) that generic leaves of 
$\Lambda^\pm_\psi$  are weakly attracted to $\Lambda^-_\phi$ under iterations of $\phi^{-1}$, we claim that we may choose 
  an attracting neighborhood $V^-_\phi$ of $\Lambda^-_\phi$ defined by a long generic leaf segment of $\Lambda^-_\phi$ such that 
  $\Lambda^\pm_\psi\notin V^-_\phi$. If such a leaf segment does not exist then it would imply that some generic leaf of $\Lambda^-_\phi$ 
  is a leaf of either $\Lambda^+_\psi$ or $\Lambda^-_\psi$. This implies that either 
  $\Lambda^-_\phi\subset \Lambda^+_\psi$ or $\Lambda^-_\phi\subset \Lambda^-_\psi$. 
  For concreteness suppose that $\Lambda^-_\phi\subset \Lambda^+_\psi$. This implies that generic leaves of $\Lambda^-_\phi$ cannot be
  attracted to $\Lambda^-_\psi$, since otherewise the inclusion $\Lambda^-_\phi\subset \Lambda^+_\psi$ implies that 
  generic leaves of $\Lambda^+_\psi$ would get attracted to $\Lambda^-_\psi$, which is not possible.
  But then this would 
  violate that generic leaves of $\Lambda^-_\phi$ is weakly attracted to both $\Lambda^+_\psi$ and $\Lambda^-_\psi$ as in the conclusion of Lemma \ref{mutualatt}.

  Now notice that under this setup, $\Lambda^-_\psi\notin V^-_\phi$ and generic leaves of $ \Lambda^-_\psi$ are not carried by 
  $\mathcal{A}_{na}(\Lambda^\pm_\phi)$. Hence by applying the uniformity part of the weak attraction theorem, we 
  get an $M\geq 1$ such that $\phi^m_\#(\gamma^-_\psi)\in V^+_\phi$ for all $m\geq M$ for every generic leaf 
  $\gamma^-_\psi\in\Lambda^-_\psi$.  Since $V^+_\phi$ is an open set 
  we can find an $I\geq 1$ such that $\phi^m_\#(\alpha_i)\in V^+_\phi$ for all $m\geq M, i\geq I$. This 
  implies that $\{\alpha_i\}$ cannot approximate $\Lambda^-_\phi$, since otherwise generic leaves of 
  $\Lambda^-_\phi$  would get attracted to $\Lambda^+_\phi$.
 \end{proof}
 
 Now we are ready to prove a version of Mosher's 3-of-4 stretch lemma \cite{Mos-97} and give a new example of a hyperbolic-by-hyperbolic hyperbolic 
 group !
 
 \begin{proposition}[3-of-4 stretch]
 \label{34}
  Let $\phi, \psi$ be outer automorphisms which satisfy the standing assumptions above. Then we have the following:
  \begin{enumerate}
   \item There exists some $M\geq 0$ such that 
   for any conjugacy class $\alpha$,  at least three of the four numbers 
   $$ {||\phi^{n_i}_\#(\alpha)||}_{el}, {||\phi^{-n_i}_\#(\alpha)||}_{el}, 
  {||\psi^{n_i}_\#(\alpha)||}_{el}, {||\psi^{-n_i}_\#(\alpha)||}_{el}$$
  are greater an or equal to $3{||\alpha||}_{el}$, for all $n_i\geq M$.
  
  \item There exists some $N\geq 0$ such that for any word $w\in\F$, at least three of the four numbers 
   
   $$ {|\Phi^{n_i}_\#(w)|}_{el}, {|\Phi^{-n_i}_\#(w)|}_{el}, 
  {|\Psi^{n_i}_\#(w)|}_{el}, {|\Psi^{-n_i}_\#(w)|}_{el}$$
  are greater than $3{|w|}_{el}$, for all $n_i\geq N$.
   
  \end{enumerate}
 \end{proposition}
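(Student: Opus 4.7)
The argument is by contradiction; I sketch item (1), with item (2) following by the parallel argument using Proposition \ref{strictflare} in place of Proposition \ref{conjflare} (the ``$w\in F^i$'' sub-case in the latter's proof is vacuous because the standing assumptions make each $\mathcal{A}_{na}(\Lambda^\pm_\bullet)$ trivial).

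Suppose (1) fails. Then there exist conjugacy classes $\alpha_i$ and integers $n_i\to\infty$ such that at least two of the four numbers $||\phi^{\pm n_i}_\#(\alpha_i)||_{el}$ and $||\psi^{\pm n_i}_\#(\alpha_i)||_{el}$ are strictly less than $3||\alpha_i||_{el}$; after extracting a subsequence we may fix which two fail. Proposition \ref{conjflare}, applied separately to $\phi$ and to $\psi$, rules out both failures arising from iterates of the same automorphism, so the failing pair is mixed. Since the standing assumptions are symmetric under $\phi\leftrightarrow\phi^{-1}$ and $\psi\leftrightarrow\psi^{-1}$, after renaming we may assume $||\phi^{n_i}_\#(\alpha_i)||_{el}<3||\alpha_i||_{el}$ and $||\psi^{n_i}_\#(\alpha_i)||_{el}<3||\alpha_i||_{el}$.

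Next apply Lemma \ref{legality} to $\phi$: either $\phi^{M}_\#(\alpha_i)$ has $\Lambda^+_\phi$-legality $\geq \epsilon$, or $\phi^{-M}_\#(\alpha_i)$ has $\Lambda^-_\phi$-legality $\geq \epsilon$. The first alternative is excluded by Lemma \ref{flare} combined with the Lipschitz bound between $||\phi^{M}_\#(\alpha_i)||_{el}$ and $||\alpha_i||_{el}$, which would otherwise give $||\phi^{n_i}_\#(\alpha_i)||_{el}\geq 3||\alpha_i||_{el}$ for all sufficiently large $n_i$. Hence $\phi^{-M}_\#(\alpha_i)$ has $\Lambda^-_\phi$-legality $\geq\epsilon$, and the analogous argument yields that $\psi^{-M}_\#(\alpha_i)$ has $\Lambda^-_\psi$-legality $\geq\epsilon$. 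Iterating the backward maps further and using that $\phi^{-1}$ (respectively $\psi^{-1}$) stretches $\Lambda^-_\phi$- (respectively $\Lambda^-_\psi$-) generic leaf segments exponentially, one concludes that $\phi^{-n_i}_\#(\alpha_i)$ approximates $\Lambda^-_\phi$ and $\psi^{-n_i}_\#(\alpha_i)$ approximates $\Lambda^-_\psi$ in the sense of the definition preceding Lemma \ref{conv}.

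The principal obstacle is that these two approximation statements involve \emph{different} sequences of conjugacy classes, whereas Lemma \ref{conv} requires a single sequence approximating both laminations. The plan is to bridge this gap by applying the uniform form of the Weak Attraction Theorem (Lemma \ref{WAT}) directly to $\{\alpha_i\}$: the disjointness $\Lambda^\pm_\phi\cap\Lambda^\pm_\psi=\emptyset$ lets one choose attracting neighborhoods $V^{\pm}_\phi, V^{\pm}_\psi$ that are pairwise disjoint, and the legality facts above, combined with Lemma \ref{mutualatt} and the argument used in the proof of Lemma \ref{conv}, force each $\alpha_i$ (for $i$ large) to carry $\Lambda^-_\phi$- and $\Lambda^-_\psi$-leaf content of proportion tending to $1$. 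Hence $\{\alpha_i\}$ itself approximates both $\Lambda^-_\phi$ and $\Lambda^-_\psi$, contradicting Lemma \ref{conv} and completing the proof of item (1).
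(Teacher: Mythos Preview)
Your overall setup matches the paper's: argue by contradiction, extract $\alpha_i$ and $n_i\to\infty$, use Proposition~\ref{conjflare} to reduce to the mixed case $||\phi^{n_i}_\#(\alpha_i)||_{el}<3||\alpha_i||_{el}$ and $||\psi^{n_i}_\#(\alpha_i)||_{el}<3||\alpha_i||_{el}$, and treat item (2) by the argument of Proposition~\ref{strictflare}. The divergence is in the order in which you invoke Lemma~\ref{conv} and the flaring machinery, and that reversal is where the gap appears.

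You try to work \emph{toward} Lemma~\ref{conv}: from the failure of forward growth you deduce backward legality, then assert that $\phi^{-n_i}_\#(\alpha_i)$ approximates $\Lambda^-_\phi$ and $\psi^{-n_i}_\#(\alpha_i)$ approximates $\Lambda^-_\psi$, and finally ``plan'' to transfer this to $\{\alpha_i\}$ itself. Neither of those last two steps is established. Legality $\geq\epsilon$ for $\phi^{-M}_\#(\alpha_i)$ tells you that a fixed positive \emph{proportion} of the circuit lies in long leaf segments, not that the proportion tends to $1$; getting approximation would require controlling the growth of the non-legal part, which you have not done. And even granting that, there is no mechanism offered for pulling the approximation back to the single sequence $\{\alpha_i\}$: ``the legality facts above, combined with Lemma~\ref{mutualatt} and the argument used in the proof of Lemma~\ref{conv}'' is not an argument but a list of ingredients. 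Also, $\{\Lambda^+_\phi,\Lambda^-_\phi\}\cap\{\Lambda^+_\psi,\Lambda^-_\psi\}=\emptyset$ only says the laminations are distinct, not that they admit pairwise disjoint attracting neighborhoods.

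The paper runs the logic in the opposite direction, and this avoids all of the above. It invokes Lemma~\ref{conv} \emph{first}: the sequence $\{\alpha_i\}$ cannot approximate both $\Lambda^-_\phi$ and $\Lambda^-_\psi$, so (say) it does not approximate $\Lambda^-_\phi$. After passing to a subsequence one has $\alpha_i\notin V^-_\phi$ for a suitable attracting neighborhood. Since $\mathcal{A}_{na}(\Lambda^\pm_\phi)$ is trivial, the uniform Weak Attraction Theorem gives a single $M$ with $\phi^m_\#(\alpha_i)\in V^+_\phi$ for all $m\geq M$; hence $\phi^{n_i}_\#(\alpha_i)$ has legality $\geq\epsilon$ once $n_i\geq M$, and Lemma~\ref{flare} then forces $|\phi^{n_i}_\#(\alpha_i)|/|\alpha_i|\to\infty$, contradicting the assumed bound $||\phi^{n_i}_\#(\alpha_i)||_{el}<3||\alpha_i||_{el}$. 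Reordering your argument in this way closes the gap cleanly.
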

 
 \begin{proof}
  Proof of (1): 
  Suppose there does not exist any such $M_0$. We argue to a contradiction by using the weak attraction theorem. 
  By our supposition we get a sequence of conjugacy classes 
  $\alpha_i$ such that at least two of the four numbers 
  ${||\phi^{n_i}_\#(\alpha_i)||}_{el}, {||\phi^{-n_i}_\#(\alpha_i)||}_{el}$, 
  ${||\psi^{n_i}_\#(\alpha_i)||}_{el}, {||\psi^{-n_i}_\#(\alpha_i)||}_{el}$ are less than 
  $3{||\alpha_i||}_{el}$ and $n_i>i$. Proposition \ref{conjflare} tells us that at least one of 
  $\{{||\phi^{n_i}_\#(\alpha_i)||}_{el}, {||\phi^{-n_i}_\#(\alpha_i)||}_{el}\}$ is $\geq 3{||\alpha_i||}_{el}$
   and at least one of 
   
   $\{{||\psi^{n_i}_\#(\alpha_i)||}_{el}, {||\psi^{-n_i}_\#(\alpha_i)||}_{el}\}$ is $\geq 3{||\alpha_i||}_{el}$
  for all sufficiently large $i$. 
  
  For sake of concreteness suppose that 
  ${||\phi^{n_i}_\#(\alpha_i)||}_{el} \leq 3{||\alpha_i||}_{el}$ and 
  ${||\psi^{n_i}_\#(\alpha_i)||}_{el} \leq 3{||\alpha_i||}_{el}$ for all $n_i$. \hfill ($\ast$)
  
  Using Lemma \ref{conv} we know that the sequence $\{\alpha_i\}$ cannot approximate both $\Lambda^-_\phi$ and and 
  $\Lambda^-_\psi$.  For concreteness suppose that $\{\alpha_i\}$ does not approximate $\Lambda^-_\phi$.
  Then we can choose some attracting neighborhood $V^-_\phi$ of $\Lambda^-_\phi$ which is defined by some long 
  generic leaf segment and after passing to a subsequence if necessary we may assume that $\alpha_i\notin V^-_\phi$ for all 
  $i$. Also since $\mathcal{A}_{na}(\Lambda^\pm_\phi)=\emptyset$ 
  by using the uniformity part of the weak attraction theorem, there exists some $M$ 
  such that $\phi^m_\#(\alpha_i)\in V^+_\phi$ for all $m\geq M$. Choosing $i$ to be sufficiently large 
  we may assume that $n_i \geq M$ and so by using Lemma \ref{legality} we have ${LEG}_{H^\sigma_r}(\phi^{n_i}_\#(\alpha_i))\geq \epsilon$ for some $\epsilon>0$.
  By using Lemma \ref{flare} we obtain that for any $A>0$ there exists some $M_1$ such that 
  $${|\phi^m_\#(\alpha_i)|}_{\langle\mathcal{Z}, \hat{\sigma}\rangle}\geq A {|\alpha_i|}_{\langle\mathcal{Z}, \hat{\sigma}\rangle} $$ 
  for every $m>M_1$. This implies that 
  for all sufficiently large $i$, $|\phi^{n_i}_\#(\alpha_i)|_{\langle\mathcal{Z}, \hat{\sigma}\rangle} \geq A |\alpha_i|_{\langle\mathcal{Z}, \hat{\sigma}\rangle}$. Choosing a 
  sequence $A_i\to \infty$ and after passing to a subsequence of $\{n_i\}$ we may assume that 
  $|\phi^{n_i}_\#(\alpha_i)|_{\langle\mathcal{Z}, \hat{\sigma}\rangle} \geq A_i |\alpha_i|_{\langle\mathcal{Z}, \hat{\sigma}\rangle}$. 
  But this implies that the ratio 
  $|\phi^{n_i}_\#(\alpha_i)|_{\langle\mathcal{Z}, \hat{\sigma}\rangle}/|\alpha_i|_{\langle\mathcal{Z}, \hat{\sigma}\rangle} \to \infty$ as $i\to\infty$. This contradicts ($\ast$).
  
  Proof of (2) is similar to proof of Proposition \ref{strictflare}.
 \end{proof}

 \begin{theorem}\label{free-by-free}
  Let $\phi, \psi$ be outer automorphisms which satisfy the standing assumptions above. Then there exists some 
  $M>0$ such that for every $m, n \geq M$ the group $Q:=\langle \phi^m, \psi^n \rangle$ is a free group of rank $2$ and 
  the extension group $\F\rtimes \widetilde{Q}$ is hyperbolic for any lift $\widetilde{Q}$ of $Q$.
 \end{theorem}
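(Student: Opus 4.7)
The plan is to combine the $3$-of-$4$ stretch lemma of Proposition \ref{34} with the Bestvina--Feighn combination theorem \cite{BF-92}. The standing assumptions \ref{SA} force $\mathcal{A}_{na}(\Lambda^\pm_\phi)$ and $\mathcal{A}_{na}(\Lambda^\pm_\psi)$ to be trivial, so by Lemma \ref{mutualatt} both $\phi$ and $\psi$ are atoroidal, and the electrocuted metric ${|\cdot|}_{el}$ on $\F$ coincides with the ordinary word metric (nothing is coned off). Corollary \ref{atoroidal} then tells us that the individual mapping tori $\F\rtimes_\Phi\mathbb{Z}$ and $\F\rtimes_\Psi\mathbb{Z}$ are hyperbolic; the task is to upgrade this to hyperbolicity of the rank-two extension.

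First I would verify that $Q = \langle \phi^m, \psi^n \rangle$ is free of rank two for all sufficiently large $m, n$, via a ping-pong argument on the space of lines $\mathcal{B}$. By item (1) of the standing assumptions the four laminations $\Lambda^\pm_\phi, \Lambda^\pm_\psi$ are distinct, and Lemma \ref{mutualatt}(2)--(3) says each generic leaf of any one of them is weakly attracted to the others under the appropriate iterate of $\phi^{\pm 1}$ or $\psi^{\pm 1}$. One can therefore choose pairwise-disjoint weak attracting neighborhoods $V^\pm_\phi, V^\pm_\psi$ whose closures are disjoint. Since each $\mathcal{A}_{na}$ is trivial, the uniformity clause of the weak attraction theorem (Lemma \ref{WAT}) supplies an $M$ such that for $m, n \geq M$, $\phi^{\pm m}$ sends the complement of a small neighborhood of $\Lambda^\mp_\phi$ into $V^\pm_\phi$, and analogously for $\psi^{\pm n}$. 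The classical ping-pong lemma then forces $Q$ to be free of rank two.

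Next I would verify the Bestvina--Feighn hallway flare condition for the tree-of-hyperbolic-spaces decomposition of $\F \rtimes \widetilde{Q}$ over the Cayley tree $T$ of the rank-$2$ free group $Q$. At a vertex $v$ of $T$ carrying a word $w \in \F$, the two neighbors of $v$ on a geodesic hallway correspond to applying two \emph{distinct} generators among $\{\phi^{\pm m}, \psi^{\pm n}\}$ (distinct because the underlying path in $T$ does not backtrack). Proposition \ref{34}(2) says that at most one of the four generator choices fails to triple ${|w|}_{el}$, so at least one of the two chosen neighbors has length exceeding $3{|w|}_{el}$; this is precisely the length-$2$ flare condition with $\lambda = 3$. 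Iterating this bookkeeping along a longer hallway, in the manner of Mosher's original $3$-of-$4$ argument \cite{Mos-97}, produces the full hallway flare condition for some parameters $(m_0, \lambda_0)$. Together with hyperbolicity of the vertex spaces (copies of $\F$) and the tree structure of $T$, the Bestvina--Feighn combination theorem \cite{BF-92} then concludes that $\F \rtimes \widetilde{Q}$ is hyperbolic.

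The main obstacle, as in Mosher's original argument, is that the ``bad'' direction at each intermediate vertex of the hallway depends on the word sitting at that vertex, so the bad directions along the hallway might conspire against us; the key is to propagate the $3$-of-$4$ estimate simultaneously along \emph{both} halves of the hallway and exploit geodesicity in $T$ (no immediate backtracking) to rule out the pathological scenario in which both halves fail to grow. This is exactly the accounting step that mirrors Mosher's treatment in the mapping class group setting, and it is where the interaction between the lamination dynamics on each side and the choice of $M$ relative to the bounded-cancellation constants of $\Phi$ and $\Psi$ must be calibrated.
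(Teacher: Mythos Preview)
Your proposal is correct and follows essentially the same strategy as the paper: invoke Proposition \ref{34} (the 3-of-4 stretch lemma) to verify the Bestvina--Feighn annuli/hallway flare condition, and conclude hyperbolicity of $\F\rtimes\widetilde{Q}$ from the combination theorem \cite{BF-92}. The paper's proof is in fact only two sentences long and defers all of the substantive bookkeeping to Proposition \ref{34}; your discussion of propagating the bad direction along the hallway, in the spirit of Mosher \cite{Mos-97}, is exactly the content that the paper has already packaged into that proposition.

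The one genuine deviation is your treatment of freeness of $Q$. The paper simply says freeness ``follows directly from the 3-of-4 stretch result in Proposition \ref{34}'' (i.e.\ a nontrivial relation among $\phi^m,\psi^n$ would contradict the uniform growth guaranteed by the 3-of-4 inequality applied along the relation). You instead set up an independent ping-pong argument on the space of lines $\mathcal{B}$ using disjoint attracting neighborhoods $V^\pm_\phi,V^\pm_\psi$ and the uniformity in Lemma \ref{WAT}. Both routes are valid; yours is more geometric and makes the north--south dynamics explicit, while the paper's is shorter because the needed estimate is already sitting inside Proposition \ref{34}. Since you must establish the flare condition anyway, the paper's route avoids duplicated effort.
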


 \begin{proof}
  The conclusion about free groups follows directly from 3-of-4 stretch result in Proposition \ref{34}. 
  The hyperbolicity of the extension group follows by using the Bestvina-Feighn Combination theorem \cite{BF-92} 
  since item (2) of Proposition \ref{34} implies that the annuli flare condition is satisfied.
 \end{proof}

 It is worth pointing out that it is very easy to construct examples of $\phi, \psi$ such that no element of $Q$ will be fully irreducible.
 \begin{corollary}\cite[Theorem 5.2]{BFH-97}
  Suppose $\phi, \psi$ are fully irreducible and atoroidal which do not have common powers.
  Then there exists some 
  $M>0$ such that for every $m, n \geq M$ the group $Q:=\langle \phi^m, \psi^n \rangle$ is a free group of rank $2$ and 
  the extension group $\F\rtimes \widetilde{Q}$ is hyperbolic for any lift $\widetilde{Q}$ of $Q$.
 \end{corollary}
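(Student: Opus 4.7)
The plan is to derive this corollary as a direct application of Theorem \ref{free-by-free}, by checking that the hypotheses (fully irreducible, atoroidal, and no common powers) imply the standing assumptions in \ref{SA} after passing to suitable rotationless powers. First, I would fix $K$ large enough that $\phi_0 := \phi^K$ and $\psi_0 := \psi^K$ are both rotationless (using the existence of a uniform rotationless power from \cite{FH-11}). Fully irreducibility and atoroidality pass to powers, and having no common powers is equivalent for $(\phi,\psi)$ and $(\phi_0, \psi_0)$, so it suffices to produce the conclusion for $\phi_0, \psi_0$ and then absorb $K$ into the final constant $M$.

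Next I would verify items (2) and (3) of \ref{SA} for $\phi_0$ and $\psi_0$. Each fully irreducible atoroidal outer automorphism has a unique pair of dual attracting laminations $\Lambda^{\pm}_{\phi_0}$, and since there are no other attracting laminations they are automatically topmost; rotationlessness gives $\phi_0$-invariance of $\Lambda^+_{\phi_0}$ and $\phi_0^{-1}$-invariance of $\Lambda^-_{\phi_0}$, yielding item (3). For item (2), fully irreducibility forces the free factor support of $\Lambda^+_{\phi_0}$ to be the improper system $[\F]$; combined with atoroidality, every nontrivial conjugacy class is attracted to $\Lambda^+_{\phi_0}$ under iteration of $\phi_0$. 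By item (3) of Lemma \ref{NAS}, this means $\mathcal{A}_{na}(\Lambda^{\pm}_{\phi_0})$ carries no nontrivial conjugacy class, so it is trivial. The identical argument handles $\psi_0$.

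The main obstacle is verifying item (1), the disjointness $\{\Lambda^+_{\phi_0}, \Lambda^-_{\phi_0}\} \cap \{\Lambda^+_{\psi_0}, \Lambda^-_{\psi_0}\} = \emptyset$. Here I would invoke the well-known fact (due to Bestvina--Feighn--Handel) that the $\out$-stabilizer of an attracting lamination of a fully irreducible outer automorphism is virtually cyclic, with the fully irreducible element itself generating a finite-index subgroup. If some lamination from the $\phi_0$-pair coincided with one from the $\psi_0$-pair, then both $\phi_0$ and $\psi_0$ would sit inside the same virtually cyclic subgroup of $\out$; since each has infinite order (being atoroidal and fully irreducible), they would admit a common power, contradicting the hypothesis on $\phi, \psi$.

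Having verified all three items of the standing assumptions for $\phi_0, \psi_0$, Theorem \ref{free-by-free} supplies an $M_0 > 0$ such that for every $m, n \geq M_0$ the group $\langle \phi_0^m, \psi_0^n \rangle$ is free of rank $2$ and the associated extension of $\F$ is hyperbolic. Setting $M := K \cdot M_0$ and noting that $\langle \phi^{Km'}, \psi^{Kn'} \rangle = \langle \phi_0^{m'}, \psi_0^{n'} \rangle$ delivers the corollary; I would finish by observing that the hyperbolicity of the extension is preserved when restricting the quotient to this finite-index subgroup, and that the statement then lifts back to $\phi, \psi$ after adjusting $M$ upward by a factor of $K$.
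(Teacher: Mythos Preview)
Your approach matches the paper's: verify the standing assumptions \ref{SA} and apply Theorem \ref{free-by-free}. The paper does this in two sentences, simply asserting that full irreducibility plus atoroidality make the standing assumptions ``automatically satisfied'' and invoking the theorem directly for $\phi,\psi$ without any passage to rotationless powers. Your verification of items (1)--(3) is a correct and more detailed version of that assertion.

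Your detour through rotationless powers $\phi_0=\phi^K$, $\psi_0=\psi^K$ is more scrupulous (the lemmas feeding into Theorem \ref{free-by-free} do carry a rotationlessness hypothesis), but it creates a gap in your final paragraph. Applying the theorem to $\phi_0,\psi_0$ yields the conclusion only for exponents that are multiples of $K$, and your attempt to upgrade this to all $m,n\geq M$ via a ``finite-index subgroup'' argument fails: in a free group $\langle a,b\rangle$ of rank two the subgroup $\langle a^K,b^K\rangle$ has \emph{infinite} index, so hyperbolicity of the extension over $\langle \phi_0^{m'},\psi_0^{n'}\rangle$ says nothing about the extension over $\langle \phi^{m'},\psi^{n'}\rangle$. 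The paper sidesteps this entirely by never taking powers: for a fully irreducible element the unique attracting lamination is automatically invariant and topmost, and the flaring arguments of \cite{BFH-97} are available without the CT/rotationless hypothesis, so Theorem \ref{free-by-free} may be read as applying to $\phi,\psi$ themselves.
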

\begin{proof}
 Since $\phi, \psi$ are fully irreducible, atoroidal, our standing assumptions are automatically satisfied. Now apply Theorem \ref{free-by-free}.
\end{proof}

 \textbf{Remark: } Caglar Uyanik has pointed out to the author that the above theorem can also be deduced from his work \cite{Uya-17}, since 
 our standing assumptions imply that the hypothesis of \cite[Proposition 4.2]{Uya-17} (the version of Mosher's 3-of-4 stretch lemma in his work)
 is satisfied. The attracting laminations in our hypothesis act as the attracting and repelling simplices in the space of currents 
 which is used for a ping-pong argument in that paper.
 
 \subsection{Quadratic isoperimetric inequality}
Bridson-Groves \cite{BrG-10} proved that the mapping tori of any outer automorphism of a free group satisfies the quadratic 
isoperimetric inequality. We can deduce the same theorem from our work here, and it is perhaps a simpler proof of the Bridson-Groves theorem. 
In another related work  Macura \cite{Mac-00} has some interesting results on the quadratic isoperimetric inequality problem.

\begin{theorem}\label{quad}
 The mapping torus of any $\phi\in\out$ satisfies the quadratic isoperimetric inequality.
\end{theorem}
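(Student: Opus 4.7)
The plan is to reduce the full statement to the polynomial growth case of Bridson--Groves \cite{BrG-10} by combining Corollary \ref{GL} with a standard inheritance theorem for Dehn functions of strongly relatively hyperbolic groups.

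\textbf{Step 1 (pass to a rotationless power).} Given $\phi\in\out$, I would first pass to a rotationless power $\phi^K$. The mapping torus of $\phi^K$ sits as a finite-index subgroup of the mapping torus of $\phi$, and since the Dehn function of a finitely presented group is a quasi-isometry invariant (up to the usual equivalence of isoperimetric functions), it suffices to establish the quadratic isoperimetric inequality under the assumption that $\phi$ is rotationless.

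\textbf{Step 2 (split on growth type).} If $\phi$ is polynomially growing, the conclusion is exactly the polynomial growth case of the Bridson--Groves theorem \cite{BrG-10}, which we take as input. If instead $\phi$ is exponentially growing, I would apply Corollary \ref{GL} to express the mapping torus $\Gamma$ as a group that is strongly hyperbolic relative to finitely many peripheral subgroups of the form $F^i\rtimes_{\Phi_i}\mathbb{Z}$, where the outer automorphism class induced by $\Phi_i$ on $F^i$ is polynomially growing. Applying the polynomial case of Bridson--Groves to each of these peripheral mapping tori then supplies a quadratic isoperimetric inequality for each $F^i\rtimes_{\Phi_i}\mathbb{Z}$.

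\textbf{Step 3 (inherit QII from the peripheral subgroups).} To finish, I would invoke the standard inheritance theorem for Dehn functions: if a finitely generated group $G$ is strongly hyperbolic relative to a finite collection of finitely presented subgroups $\{H_i\}$, each admitting a polynomial isoperimetric function of degree $\leq d$, then $G$ admits a polynomial isoperimetric function of degree $\leq \max(d,2)$. In our setting each peripheral subgroup has $d=2$, so $\Gamma$ itself satisfies the quadratic isoperimetric inequality, completing the argument.

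The main obstacle I anticipate is citing the inheritance theorem in Step 3 in precisely the right form and checking that the strong relative hyperbolicity produced by Proposition \ref{relhypext} via the Mj--Reeves combination theorem is the genuine Bowditch/Farb/Osin notion needed for that theorem to apply. Once this is lined up — which should be routine since Mj--Reeves output exactly that flavor of relative hyperbolicity — the three steps above combine to give a new proof of the general Bridson--Groves theorem that is conceptually cleaner than the original by isolating all the polynomial-growth work into the corresponding special case.
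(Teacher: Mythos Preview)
Your proposal is correct and follows essentially the same route as the paper: reduce to a rotationless power by quasi-isometry invariance of the Dehn function, handle the polynomial case directly via Bridson--Groves, and in the exponential case apply Corollary \ref{GL} together with the inheritance of quadratic isoperimetric inequality from peripheral subgroups in a strongly relatively hyperbolic group. The paper attributes the inheritance step you call the ``standard inheritance theorem'' to Farb \cite{Fa-98}, which resolves the obstacle you flagged in Step 3.
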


\begin{proof}
 If $\phi$ is polynomially growing, then the result follows from Bridson-Groves theorem for the special case of polynomially growing outer automorphisms.
 Otherwise, for exponentially growing $\phi$, denote its mapping tori by $\Gamma$.
 Pass to a rotationless power, call it $\phi'$, and use Corollary \ref{GL} to conclude that the mapping torus of $\phi'$, $\Gamma'$ say, is 
 (strongly) hyperbolic relative to a collection of peripheral subgroups each of which satisfy the quadratic isoperimetric 
 inequality (by applying polynomially growth case of Bridson-Groves' theorem on the peripheral subgroups). Farb's work in \cite{Fa-98} shows that if the peripheral subgroups 
 of a relatively hyperbolic group $G$, satisfies the quadratic isoperimetric inequality, then $G$ satisfies the quadratic isoperimetric inequality. 
Hence we conclude that $\Gamma'$ satisfies the quadratic isoperimetric inequality. 

Since $\Gamma'$ is a finite index subgroup of $\Gamma$ and the property of satisfying the quadratic isoperimetric inequality is  
quasi-isometry invariant, $\Gamma$ also satisfies the quadratic isoperimetric inequality.

 \end{proof}

\noindent\rule[0.5ex]{\linewidth}{1pt}
Address: Department of Mathematical Sciences, IISER Mohali, Punjab, India.\\
Contact: \href{mailto:pritam@scarletmail.rutgers.edu}{pritam@scarletmail.rutgers.edu}

\bibliographystyle{plainnat}
\def\bibfont{\footnotesize}
\bibliography{biblo}

\begin{thebibliography}{30}
\providecommand{\natexlab}[1]{#1}
\providecommand{\url}[1]{\texttt{#1}}
\expandafter\ifx\csname urlstyle\endcsname\relax
  \providecommand{\doi}[1]{doi: #1}\else
  \providecommand{\doi}{doi: \begingroup \urlstyle{rm}\Url}\fi

\bibitem[Behrstock et~al.(2009)Behrstock, Drutu, and Mosher]{BDM-09}
J.~Behrstock, C.~Drutu, and L.~Mosher.
\newblock {Thick metric spaces, Relative hyperbolicity and Quasi-isometric
  rigidity}.
\newblock \emph{math. Annalen}, 344:\penalty0 543--595, 2009.

\bibitem[Bestvina and Feighn(1992)]{BF-92}
M.~Bestvina and M.~Feighn.
\newblock {A combination theorem for negetively curved groups}.
\newblock \emph{J. Differential Geom.}, 43\penalty0 (4):\penalty0 85--101,
  1992.

\bibitem[Bestvina and Handel(1992)]{BH-92}
M.~Bestvina and M.~Handel.
\newblock {Train tracks and {A}utomorphisms of {F}ree groups}.
\newblock \emph{Ann. of Math.}, 135:\penalty0 1--51, 1992.

\bibitem[Bestvina et~al.(1997)Bestvina, Handel, and Feighn]{BFH-97}
M.~Bestvina, M.~Handel, and M.~Feighn.
\newblock {Laminations, {T}rees, and irreducible automorphisms of free groups}.
\newblock \emph{Geom. Func. Anal.}, 7:\penalty0 215--244, 1997.

\bibitem[Bestvina et~al.(2000)Bestvina, Feighn, and Handel]{BFH-00}
M.~Bestvina, M.~Feighn, and M.~Handel.
\newblock {Tits alternative in Out($F_n$)-I: {D}ynamics of
  {E}xponentially-growing {A}utomorphisms}.
\newblock \emph{Ann. of Math.}, 151:\penalty0 517--623, 2000.

\bibitem[Bestvina et~al.(2005)Bestvina, Feighn, and Handel]{BFH-05}
M.~Bestvina, M.~Feighn, and M.~Handel.
\newblock {The Tits alternative for Out(Fn) II: A Kolchin type theorem}.
\newblock \emph{Ann. of Math.}, 161\penalty0 (2):\penalty0 1--59, 2005.

\bibitem[Bowditch(1997)]{Bow-97}
B.~Bowditch.
\newblock {Relatively hyperolic groups}.
\newblock \emph{Preprint}, Southampton, 1997.

\bibitem[Bridson and Groves(2010)]{BrG-10}
M.~Bridson and D.~Groves.
\newblock {The quadratic isoperimetric inequality for mapping tori of free
  group automorphisms}.
\newblock \emph{Mem. Amer. Math. Soc.}, 203:\penalty0 1--152, 2010.

\bibitem[Brinkmann(2000)]{Br-00}
P.~Brinkmann.
\newblock {Hyperbolic Automorphisms of Free Groups}.
\newblock \emph{Geom. Funct. Anal.}, 10\penalty0 (5):\penalty0 1071--1089,
  2000.

\bibitem[Button and Kropholler(2016)]{BuK-16}
J.~Button and R.~Kropholler.
\newblock {Nonhyperbolic free-by-cyclic and one-relator groups}.
\newblock \emph{NewYork Journal of mathematics}, 22:\penalty0 755--774, 2016.

\bibitem[Dowdall and Taylor(2018)]{DT-18}
S.~Dowdall and S.~Taylor.
\newblock {Hyperbolic extensions of free groups}.
\newblock \emph{Geom. Topol.}, 22:\penalty0 517--570, 2018.

\bibitem[Drutu(2009)]{Dr-09}
C.~Drutu.
\newblock {Relatively hyperbolic groups: geometry and quasi-isometric
  invariance}.
\newblock \emph{Comment. Math. Helv.}, 84:\penalty0 503--546, 2009.

\bibitem[Farb(1998)]{Fa-98}
B.~Farb.
\newblock {Relatively hyperolic groups}.
\newblock \emph{Geom. Funct. Anal.}, 8:\penalty0 810--840, 1998.

\bibitem[Feighn and Handel(2011)]{FH-11}
M.~Feighn and M.~Handel.
\newblock {The recognition theorem for Out($F_n)$}.
\newblock \emph{Groups Geom. Dyn}, 5:\penalty0 39--106, 2011.

\bibitem[Gautero(2016)]{Gau-16}
F.~Gautero.
\newblock {Geodesics in Trees of Hyperbolic and Relatively Hyperbolic Spaces}.
\newblock \emph{Proceedings of the Edinburgh Mathematical Society},
  59:\penalty0 701--740, 2016.

\bibitem[Gautero and Lustig(2007)]{Gau-07}
F.~Gautero and M.~Lustig.
\newblock {The mapping torus group of a free group automorphism is hyperbolic
  relative to the canonical subgroups of polynomial growth}.
\newblock \emph{Preprint}, Preprint, 2007.

\bibitem[Ghosh(2017)]{self17}
P.~Ghosh.
\newblock {Limits of conjugacy classes under iterates of hyperbolic elements of
  Out($\mathbb{F}$)}.
\newblock \emph{arXiv}, 2017.
\newblock URL \url{https://arxiv.org/abs/1709.09024}.

\bibitem[Ghosh(2018)]{self2}
P.~Ghosh.
\newblock {Relatively irreducible free subgroups of Out($\mathbb{F}$)}.
\newblock \emph{arXiv}, 2018.
\newblock URL \url{https://arxiv.org/abs/1210.8081}.

\bibitem[Hagen and Niblo(2018)]{HagNib-18}
M.~Hagen and G.~Niblo.
\newblock {A remark on thickness of free-by-cyclic groups}.
\newblock \emph{In preparation}, pages 1--5, 2018.

\bibitem[Hagen and Wise(2016)]{HagenW-16}
M.~Hagen and D.~Wise.
\newblock {Cubulating mapping tori of polynomial growth free group
  automorphisms}.
\newblock \emph{arXiv:1605.07879}, 2016.
\newblock URL \url{https://arxiv.org/abs/1605.07879}.

\bibitem[Handel and Mosher(2017{\natexlab{a}})]{HM-13a}
M.~Handel and L.~Mosher.
\newblock {Subgroup decomposition in Out($F_n$), Part I: Geometric Models}.
\newblock \emph{Mem. Amer. Math. Soc}, (accepted), 2017{\natexlab{a}}.
\newblock URL \url{http://arxiv.org/abs/1302.2378}.

\bibitem[Handel and Mosher(2017{\natexlab{b}})]{HM-13c}
M.~Handel and L.~Mosher.
\newblock {Subgroup decomposition in Out($F_n$), Part III: Weak Attraction
  theory}.
\newblock \emph{Mem. Amer. Math. Soc.}, (accepted), 2017{\natexlab{b}}.
\newblock URL \url{https://arxiv.org/abs/1306.4712}.

\bibitem[Handel and Mosher(2009)]{HM-09}
M.~Handel and Lee Mosher.
\newblock {Subgroup classification in Out($F_n$)}.
\newblock \emph{arXiv}, 2009.
\newblock URL \url{http://arxiv.org/abs/0908.1255}.

\bibitem[Macura(2000)]{Mac-00}
N.~Macura.
\newblock {Quadratic isoperimetric inequality for mapping tori of polynomially
  growing outer automorphisms of free groups}.
\newblock \emph{Geom. Funct. Anal.}, 10\penalty0 (4):\penalty0 874--901, 2000.

\bibitem[Macura(2002)]{Mak-02}
N.~Macura.
\newblock {Detour functions and quasi-isometries}.
\newblock \emph{Quat. J. of Math.}, 53:\penalty0 207--239, 2002.

\bibitem[Minasyan and Osin(2015)]{MinOs-15}
A.~Minasyan and D.~Osin.
\newblock {Acylindrical hyperbolicity of groups acting on trees}.
\newblock \emph{math. Annalen}, 362:\penalty0 1055--1105, 2015.

\bibitem[Mj and Reeves(2008)]{MjR-08}
M.~Mj and L.~Reeves.
\newblock {A combination theorem for strong relative hyperolicity}.
\newblock \emph{Geom. Topol.}, 12:\penalty0 1777--1798, 2008.

\bibitem[Mosher(1997)]{Mos-97}
L.~Mosher.
\newblock {A hyperbolic-by-hyperbolic hyperbolic group}.
\newblock \emph{Proc. Ams}, 125:\penalty0 3447--3455, 1997.

\bibitem[Sisto(2012)]{Sisto-12}
A.~Sisto.
\newblock {On metric relative hyperbolicity}.
\newblock \emph{arXiv:1210.8081}, 2012.
\newblock URL \url{https://arxiv.org/abs/1210.8081}.

\bibitem[Uyanik(2017)]{Uya-17}
C.~Uyanik.
\newblock {Hyperbolic extensions of free groups using atoroidal ping-pong}.
\newblock \emph{arXiv}, pages 1--15, 2017.
\newblock URL \url{https://arxiv.org/abs/1711.07923}.

\end{thebibliography}

\end{document}